\renewcommand{\sectionmark}[1]{\markright{Free Group Actions from the Viewpoint of Dynamical Systems}}
\newtheorem{theorem}{Theorem}[section]
\newtheorem{proposition}[theorem]{Proposition}
\newtheorem{corollary}[theorem]{Corollary}
\newtheorem{lemma}[theorem]{Lemma}
\newtheorem{theorem-example}[theorem]{Theorem $\backslash$ Example}
\renewenvironment{proof}{\begin{sloppypar}\noindent{\bf Proof}}{\hfill$\blacksquare$\end{sloppypar}}
\theoremstyle{definition}
\newtheorem{definition}[theorem]{Definition}
\newtheorem{example}[theorem]{Example}
\newtheorem{remark}[theorem]{Remark}
\newtheorem{open problem}[theorem]{Open Problem}
\DeclareMathOperator{\Aut}{Aut}
\DeclareMathOperator{\id}{id}
\DeclareMathOperator{\ev}{ev}
\DeclareMathOperator{\GL}{GL}
\DeclareMathOperator{\Hom}{Hom}
\DeclareMathOperator{\pr}{pr}
\DeclareMathOperator{\supp}{supp}
\DeclareMathOperator{\Prim}{Prim}
\DeclareMathOperator{\Lin}{Lin}
\begin{document}

\title{\bf{Free Group Actions from the\\Viewpoint of Dynamical Systems}}
\author{Stefan Wagner\\
SFB 878 WWU M\"unster\\
\url{swagn_02@uni-muenster.de}}

\maketitle

\begin{abstract}
\noindent
A dynamical system is a triple $(A,G,\alpha)$, consisting of a unital locally convex algebra $A$, a topological group $G$ and a group homomorphism $\alpha:G\rightarrow\Aut(A)$, which induces a continuous action of $G$ on $A$. In this paper we present a new characterization of free group actions (in classical differential geometry), involving dynamical systems and representations of the corresponding transformation groups. Indeed, given a dynamical system $(A,G,\alpha)$, we provide conditions including the existence of ``sufficiently many" representations of $G$ which ensure that the corresponding action
\[\sigma:\Gamma_A\times G\rightarrow\Gamma_A,\,\,\,(\chi,g)\mapsto\chi\circ\alpha(g)
\]of $G$ on the spectrum $\Gamma_A$ of $A$ is free. In particular, the case of compact abelian groups is discussed very carefully. We further present an application to the structure theory of $C^*$-algebras and an application to the noncommutative geometry of principal bundles.
\end{abstract}

\pagenumbering{arabic}

\thispagestyle{empty}

\tableofcontents

\section{Introduction}

Since the \emph{Erlanger Programm} of Felix Klein, the defining concept in the study of a geometry has been its symmetry group. In classical differential geometry the symmetries of a manifold are measured by Lie groups, i.e., one studies smooth group actions of a Lie group $G$ acting by diffeomorphisms on a manifold $M$. Of particular interest is the class of smooth group actions which are free and proper: In fact, by a classical result having a free and proper action of a Lie group $G$ on a manifold $P$ is equivalent to saying that $P$ carries the structure of a principal bundle with structure group $G$. 

\noindent
The origin of this paper is the question of whether there is a way to translate the geometric concept of principal bundles to noncommutative differential geometry. From a geometrical point of view it is, so far, not sufficiently well understood what a ``noncommutative principal bundle" should be. Still, there are several approaches towards the noncommutative geometry of principal bundles: For example, there is a well-developed abstract algebraic approach known as Hopf-Galois extensions which uses the theory of Hopf algebras (cf. [Sch04]). Another topologically oriented approach can be found in [ENOO09]; here the authors use $C^*$-algebraic methods to develop a theory of principal noncommutative torus bundles. In [Wa11a] we have developed a geometrically oriented approach to the noncommutative geometry of principal bundles based on dynamical systems and the representation theory of the corresponding transformation groups.

\noindent
The starting point of the last approach is the observation that (smooth) group actions may also be studied from the viewpoint of dynamical systems (we will see soon that each group action induces a dynamical system and vice versa). Since we are interested in principal bundles, i.e., in free and proper smooth group actions, it is reasonable to ask if there exist natural algebraic conditions on a dynamical system $(A,G,\alpha)$ which ensure that the corresponding action
\[\sigma:\Gamma_A\times G\rightarrow\Gamma_A,\,\,\,(\chi,g)\mapsto\chi\circ\alpha(g)
\]of $G$ on the spectrum $\Gamma_A$ of $A$ is free. An important remark in this context is that the freeness condition of a group action (let's say of a group $G$) is pretty similar to the condition appearing in the definition of a family of point separating representations of $G$. 

\noindent
In fact, our conclusions involve the following main observation: Given a dynamical system $(A,G,\alpha)$ and a family $(\pi_j,V_j)_{j\in J}$ of point separating representations of $G$, we can associate the ``generalized spaces of sections" $\Gamma_A V_j:=(A\otimes V_j)^G$ and it turns out that if $A$ is commutative and the evaluation map
\[\ev^j_{\chi}:\Gamma_A V_j\rightarrow V_j,\,\,\,a\otimes v\mapsto\chi(a)\cdot v
\]is surjective for all $j\in J$ and $\chi\in \Gamma_A$, then the induced action of $G$ on $\Gamma_A$ is free. Interpreting each $\Gamma_A V_j$ as a (possibly singular) vector bundle over $\Gamma_A/G$, this result means that the induced action of $G$ on $\Gamma_A$ is free if and only if every fibre of each $\Gamma_A V_j$ is ``full", i.e., isomorphic to $V_j$. In the case of compact abelian group actions, the generalized spaces of sections associated to the dual group (which separates the points) are exactly the corresponding isotypic components and the surjectivity condition is, for example, fulfilled if each isotypic component contains an invertible element; a requirement which is in the spirit of actions having ``large" isotypic components (cf. [Pa81]) and leads to a natural concept of \emph{trivial noncommutative principal bundles} (cf. [Wa11b]).

\noindent
We now give a rough outline of the results that can be found in this paper, without going too much into detail:

\subsection*{Outline}

\noindent
A dynamical system $(A,G,\alpha)$ is called smooth if $G$ is a Lie group and the group homomorphism $\alpha:G\rightarrow\Aut(A)$ induces a smooth action of $G$ on $A$. The goal of Section 2 is to show that smooth group actions may also be studied from the viewpoint of smooth dynamical systems, i.e., that each smooth group action induces in a natural way a smooth dynamical system and vice versa.

\noindent
In Section 3 we introduce the concept of a \emph{free dynamical system}. Loosely speaking, we call a dynamical system $(A,G,\alpha)$ free, if the unital locally convex algebra $A$ is commutative and the topological group $G$ admits a family $(\pi_j,V_j)_{j\in J}$ of point separating representations of $G$ such that the evaluation maps defined on the ``generalized spaces of sections" $\Gamma_A V_j:=(A\otimes V_j)^G$ are surjective onto $V_j$ (evaluation with respect to elements of $\Gamma_A$) . We will in particular see how this condition implies that the induced action 
\[\sigma:\Gamma_A\times G\rightarrow\Gamma_A,\,\,\,(\chi,g)\mapsto\chi\circ\alpha(g)
\]of $G$ on the spectrum $\Gamma_A$ of $A$ is free. 



\noindent
In Section 4 we apply the results of Section 3 to dynamical systems arising from group actions in classical geometry. In particular, we will see how this leads to a new characterization of free group actions. For this purpose we have to restrict our attention to Lie groups that admit a family of finite-dimensional continuous point separating representations.
\vspace*{0,2cm}

\noindent
{\bf Theorem} {\bf(Characterization of free group actions).}\index{Characterization of Free Group Actions}
Let $P$ be a manifold, $G$ a compact Lie group and $(C^{\infty}(P),G,\alpha)$ a smooth dynamical system. Then the following statements are equivalent:
\begin{itemize}
\item[(a)]
The smooth dynamical system $(C^{\infty}(P),G,\alpha)$ is free.
\item[(b)]
The induced smooth group action $\sigma:P\times G\rightarrow P$ is free.
\end{itemize}
In particular, in this situation the two concepts of freeness coincide.

\noindent
From a geometrical point of view the previous theorem means that it is possible to test the freeness of a (smooth) group action $\sigma:P\times G\rightarrow P$ in terms of surjective maps defined on spaces of sections of associated (singular) vector bundles.

\noindent
Section 5 is devoted to a more careful discussion of free dynamical systems with compact abelian transformation groups. In particular, we present natural conditions including the corresponding isotypic components which ensure the freeness of such a dynamical system. These conditions do not depend on the commutativity of the algebra $A$ and may therefore be transferred to the context of Noncommutative Geometry (cf. Section \ref{outlook}).

\noindent
In Section 6 we introduce a stronger version of freeness for dynamical systems than the one given in Section \ref{section:free dynamical systems}. In fact, instead of considering arbitrary families $(\pi_j,V_j)_{j\in J}$ of (continuous) point separating representations of a topological group $G$, we restrict our attention to families $(\pi_j,\mathcal{H}_j)_{j\in J}$ of \emph{
unitary irreducible} point separating representations. At this point, we recall that each locally compact group $G$ admits a family of continuous unitary irreducible point separating representations (cf. Theorem \ref{gelfand-raikov}). 

\noindent
The goal of Section 7 is to study some topological aspects of (free) dynamical systems. In particular, we provide conditions which ensure that a dynamical system induces a topological principal bundle. 

\noindent
We finally present some further applications: Section 8 is dedicated to an open problem and an application to the structure theory of $C^*$-algebras. In Section 9 we provide an application to the noncommutative geometry of principal bundles. In fact, we give a short insight how the ``new characterization of free actions" leads to a reasonable definition of (\emph{trivial}) \emph{noncommutative principal bundles} with compact abelian structure group.

\noindent
In the appendix we discuss rudiments on the smooth exponential law the spectrum of the algebra of smooth functions on a manifold.

\subsection*{Preliminaries and Notations} All manifolds appearing in this paper are assumed to be finite-dimensional, paracompact, second countable and smooth. For the necessary background on principal bundles we refer to [KoNo63] or [Hu75]. All algebras are assumed to be complex if not mentioned otherwise. If $A$ is an algebra, we write 
\[\Gamma_A:=\Hom_{\text{alg}}(A,\mathbb{C})\backslash\{0\}
\](with the topology of pointwise convergence on $A$) for the spectrum of $A$. Moreover, a dynamical system is a triple $(A,G,\alpha)$, consisting of a unital locally convex algebra $A$, a topological group $G$ and a group homomorphism $\alpha:G\rightarrow\Aut(A)$, which induces a continuous action of $G$ on $A$.

\subsection*{Acknowledgment} 
We thank Henrik Sepp\"anen for useful hints while writing this paper. In addition we thank the \emph{Studienstiftung des deutschen Volkes} for a doctoral scholarship for my work. 


\section{Dynamical Systems in Classical Differential Geometry}\label{NGENCG}

Since the \emph{Erlanger Programm} of Felix Klein, the defining concept in the study of a geometry has been its symmetry group. In classical differential geometry the symmetries of a manifold are measured by Lie groups, i.e., one studies smooth group actions of a Lie group $G$ acting by diffeomorphisms on a manifold $M$. The goal of this section is to show that smooth group actions may also be studied from the viewpoint of (smooth) dynamical systems, i.e., that each smooth group action induces in a natural way a (smooth) dynamical system and vice versa. We start with the following proposition:

\begin{proposition}\label{smoothness of the group action on the algebra of smooth functions}
If $\sigma:M\times G\rightarrow M$ is a smooth \emph{(}right-\emph{)} action of a Lie group $G$ on a finite-dimensional manifold $M$ \emph{(}possibly with boundary\emph{)} and $E$ is a locally convex space, then the induced \emph{(}left-\emph{)} action 
\[\alpha:G\times C^{\infty}(M,E)\rightarrow C^{\infty}(M,E),\,\,\,\alpha(g,f)(m):=(g.f)(m):=f(\sigma(m,g))
\]of $G$ on the locally convex space $C^{\infty}(M,E)$ is smooth.
\end{proposition}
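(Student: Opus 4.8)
The plan is to deduce smoothness of $\alpha$ from the \emph{smooth exponential law} (the version recalled in the appendix), which is the natural tool whenever the codomain is itself a mapping space $C^{\infty}(M,E)$. Throughout, $C^{\infty}(M,E)$ carries its natural locally convex topology, and $M$ is finite-dimensional — the latter being precisely the hypothesis under which the exponential law is available. That $\alpha$ is a genuine (left) action is a routine verification: since $\sigma$ is a right action one has $\sigma(m,gh)=\sigma(\sigma(m,g),h)$, whence $\alpha(g,\alpha(h,f))=\alpha(gh,f)$; and $\alpha(g,f)$ indeed lies in $C^{\infty}(M,E)$ because $m\mapsto f(\sigma(m,g))$ is a composition of smooth maps. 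So the entire content of the statement is the smoothness of $\alpha$ as a map between (manifolds modelled on) locally convex spaces.

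By the exponential law, smoothness of $\alpha:G\times C^{\infty}(M,E)\to C^{\infty}(M,E)$ is equivalent to smoothness of its uncurried companion
\[
\alpha^{\wedge}:\bigl(G\times C^{\infty}(M,E)\bigr)\times M\to E,\qquad \alpha^{\wedge}\bigl((g,f),m\bigr):=\alpha(g,f)(m)=f(\sigma(m,g)),
\]
and it is this map that I would show is smooth. The key observation is that $\alpha^{\wedge}$ factors through the evaluation map: writing $\ev:C^{\infty}(M,E)\times M\to E$, $(f,x)\mapsto f(x)$, one has $\alpha^{\wedge}((g,f),m)=\ev\bigl(f,\sigma(m,g)\bigr)$. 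Accordingly I would introduce the auxiliary map
\[
\Phi:\bigl(G\times C^{\infty}(M,E)\bigr)\times M\to C^{\infty}(M,E)\times M,\qquad \Phi\bigl((g,f),m\bigr):=\bigl(f,\sigma(m,g)\bigr),
\]
so that $\alpha^{\wedge}=\ev\circ\Phi$. Here $\Phi$ is smooth because its first component is a projection and its second component is $\sigma$ precomposed with the smooth rearrangement $((g,f),m)\mapsto(m,g)$; and $\ev$ is smooth since, by the exponential law applied to $\id_{C^{\infty}(M,E)}$, the evaluation map is exactly the uncurried companion of the (trivially smooth) identity. Hence $\alpha^{\wedge}$ is smooth as a composition of smooth maps, and the exponential law returns smoothness of $\alpha$.

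The genuine work — and the step I expect to be the main obstacle — is not the formal composition above but the justification that the exponential law actually applies in this setting. One must ensure that $C^{\infty}(M,E)$ is equipped with the correct topology and that the relevant bijection $C^{\infty}(X\times M,E)\cong C^{\infty}(X,C^{\infty}(M,E))$ holds with the infinite-dimensional factor $X=G\times C^{\infty}(M,E)$ as the outer variable and the finite-dimensional $M$ as the inner one; this asymmetry is exactly why finite-dimensionality of $M$ is essential. A further technical point to check is the admissibility of $M$ having a boundary, which requires the version of the exponential law valid for finite-dimensional manifolds with boundary. Once these foundational facts are in place (as provided by the appendix), the remainder is the formal factorization through $\ev$ and $\sigma$ described above.
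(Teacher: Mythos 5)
Your proposal is correct and follows essentially the same route as the paper: both reduce smoothness of $\alpha$ via the smooth exponential law (Lemma~\ref{smooth exp law}) to smoothness of the uncurried map $\alpha^{\wedge}$, and then factor $\alpha^{\wedge}$ as the evaluation map composed with a smooth map built from $\sigma$. The only cosmetic difference is that the paper cites [NeWa07], Proposition~I.2 for smoothness of the evaluation map, whereas you rederive it by applying the exponential law to $\id_{C^{\infty}(M,E)}$ --- a valid shortcut that makes the argument self-contained.
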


\begin{proof}
\,\,\,We first recall from [NeWa07], Proposition I.2 that the evaluation map
\[\ev_M:C^{\infty}(M,E)\times M\rightarrow E,\,\,\,(f,m)\mapsto f(m)
\]is smooth. Next, Lemma \ref{smooth exp law} implies that the action map $\alpha$ is smooth if and only if the map
\[\alpha^{\wedge}:C^{\infty}(M,E)\times M\times G\rightarrow E,\,\,\,(f,m,g)\mapsto f(\sigma(m,g))
\]is smooth. Since 
\[\alpha^{\wedge}=\ev_M\circ(\id_{C^{\infty}(M,E)}\times\sigma),
\]we conclude that $\alpha^{\wedge}$ is smooth as a composition of smooth maps. 
\end{proof}

\pagebreak

The previous proposition immediately leads us to the following definition:

\begin{definition}\label{triple}{\bf(Smooth dynamical systems).}\index{Dynamical Systems!Smooth}
We call a dynamical system $(A,G,\alpha)$ \emph{smooth} if $G$ is a Lie group and the group homomorphism $\alpha:G\rightarrow\Aut(A)$ induces a smooth action of $G$ on $A$.
\end{definition}

\begin{example}\label{induced transformation triples}{\bf(Classical group actions).}\sindex[n]{$(C^{\infty}(P),G,\alpha)$}
As the previous discussion shows, a classical example of such a smooth dynamical system is induced by a smooth action $\sigma:M\times G\rightarrow M$ of a Lie group $G$ on a manifold $M$. In particular, each principal bundle $(P,M,G,q,\sigma)$ induces
a smooth dynamical system $(C^{\infty}(P),G,\alpha)$, consisting of the Fr\'echet algebra of smooth functions on the total space $P$, the structure group $G$ and a group homomorphism $\alpha:G\rightarrow\Aut(C^{\infty}(P))$, induced by the smooth action $\sigma:P\times G\rightarrow P$ of $G$ on $P$. For further examples of smooth dynamical systems we refer the interested reader to [Wa11a] (cf. Section \ref{outlook}).
\end{example}

The following proposition characterizes the fixed point algebra of a smooth dynamical system, which is induced from a principal bundle, as the algebra of smooth functions on the corresponding base space:

\begin{proposition}\label{fixed point algebra of principal bundles}
Let $(P,M,G,q,\sigma)$ be a principal bundle and let $(C^{\infty}(P),G,\alpha)$ be the induced smooth dynamical system. 
Then the map
\[\Psi:C^{\infty}(P)^G\rightarrow C^{\infty}(M)\,\,\,\text{defined by}\,\,\,\Psi(f)(q(p)):=f(p)
\]is an isomorphism of Fr\'{e}chet algebras.
\end{proposition}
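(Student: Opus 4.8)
The plan is to exhibit an explicit inverse for $\Psi$ and to verify that both maps are continuous algebra homomorphisms. The natural candidate for the inverse is the pullback along the bundle projection, $q^*\colon C^{\infty}(M)\rightarrow C^{\infty}(P)$, $h\mapsto h\circ q$. First I would check that $\Psi$ is well defined, then that $\Psi$ and $q^*$ are mutually inverse, and finally I would upgrade the statement to an isomorphism of Fr\'echet algebras.

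For well-definedness, suppose $f\in C^{\infty}(P)^G$ and $q(p)=q(p')$. The principal bundle structure guarantees that $G$ acts transitively on each fibre, so $p'=\sigma(p,g)$ for some $g\in G$, and $G$-invariance of $f$ yields $f(p')=f(\sigma(p,g))=f(p)$. Hence $\Psi(f)$ is a well-defined function on $M$. The only nontrivial point is the \emph{smoothness} of $\Psi(f)$, and this is where I expect the main work to lie. I would invoke local triviality: choosing an open cover $(U_i)_{i\in I}$ of $M$ together with trivialisations and the associated local sections $s_i\colon U_i\rightarrow P$ (so that $q\circ s_i=\id_{U_i}$), one obtains $\Psi(f)|_{U_i}=f\circ s_i$, which is smooth as a composition of smooth maps. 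Since smoothness is a local property, it follows that $\Psi(f)\in C^{\infty}(M)$.

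That $\Psi$ is a unital algebra homomorphism is immediate from its pointwise definition, and injectivity follows at once from surjectivity of $q$, since $\Psi(f)=0$ forces $f(p)=\Psi(f)(q(p))=0$ for all $p$. For surjectivity, and in fact for the full inverse, I would observe that $q^*h=h\circ q$ lands in $C^{\infty}(P)^G$, because $q$ is constant along $G$-orbits: $(h\circ q)(\sigma(p,g))=h(q(p))=(h\circ q)(p)$. A direct computation then shows $q^*\circ\Psi=\id$ on $C^{\infty}(P)^G$ and $\Psi\circ q^*=\id$ on $C^{\infty}(M)$, so $\Psi$ is an algebra isomorphism with inverse $q^*$.

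It remains to treat the topology. The fixed point set $C^{\infty}(P)^G=\bigcap_{g\in G}\ker\bigl(\alpha(g,\cdot)-\id\bigr)$ is a closed subspace of the Fr\'echet space $C^{\infty}(P)$, hence itself Fr\'echet. The inverse $q^*$ is continuous, being pullback along the smooth map $q$; and the local formula $\Psi(f)|_{U_i}=f\circ s_i$ shows that $\Psi$ is continuous as well, since continuity may be tested on the open cover via the continuous pullback maps $s_i^*$. Being a continuous linear bijection between Fr\'echet spaces, $\Psi$ is therefore a topological isomorphism; alternatively, continuity of $\Psi$ would already follow from the open mapping theorem once continuity of $q^*$ is established. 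The main obstacle throughout is the smoothness of $\Psi(f)$, which compels one to pass to local sections rather than to argue on $M$ directly.
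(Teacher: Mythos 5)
Your proof is correct, and its skeleton --- exhibiting the pullback $q^*$ as an explicit inverse, then checking injectivity, surjectivity and continuity --- matches the paper's. The genuine divergence is in how smoothness of $\Psi(f)$ is established: the paper invokes the universal property of submersions (a function on $M$ is smooth as soon as its pullback along the surjective submersion $q$ is smooth), whereas you use local triviality to produce local sections $s_i\colon U_i\rightarrow P$ and write $\Psi(f)|_{U_i}=f\circ s_i$. These are closely related --- the standard proof of the universal property is precisely your local-section argument --- so your route is a more hands-on, self-contained version of the same step, at the cost of being tied to local triviality; the paper's formulation applies verbatim to any surjective submersion. You also treat the Fr\'echet structure more carefully than the paper does: the paper merely asserts continuity of $\Psi$ and $\Psi^{-1}=q^*$, while you record that $C^{\infty}(P)^G$ is a closed subspace of $C^{\infty}(P)$, hence itself Fr\'echet, and note that once continuity of the linear bijection $q^*$ is known, the open mapping theorem already forces its inverse $\Psi$ to be continuous --- a cleaner finish than testing $\Psi$ against the cover, and a detail the paper leaves implicit.
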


\begin{proof}
\,\,\,First we observe that the map $\Psi$ is well-defined and a homomorphism of algebras. Further, the universal property of submersions implies that $\Psi(f)$ defines a smooth function on $M$.  

Next, if $\Psi(f)=0$, then the $G$-invariance of $f$ implies that $f=0$. Hence, $\Psi$ is injective. To see that $\Psi$ is surjective, we choose $h\in C^{\infty}(M)$ and put $f:=h\circ q$. Then $f\in C^{\infty}(P)^G$ and $\Psi(f)=h$. The claim now follows the continuity of $\Psi$ and $\Psi^{-1}=q^*$. 
\end{proof}

In the following we will show that if $M$ is a manifold, then each smooth dynamical system of the form $(C^{\infty}(M),G,\alpha)$ induces a smooth action of the Lie group $G$ on $M$. As a first step we endow $\Gamma_{C^{\infty}(M)}$ with the structure of a smooth manifold:

\begin{lemma}\label{spec as manifold}
If $M$ is a manifold, then there is a unique smooth structure on $\Gamma_{C^{\infty}(M)}$ for which the map
\[\Phi:M\rightarrow \Gamma_{C^{\infty}(M)},\,\,\,m\mapsto\delta_m
\]becomes a diffeomorphism.
\end{lemma}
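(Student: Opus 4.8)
The plan is to isolate the one genuinely nontrivial ingredient---that $\Phi$ is a homeomorphism for the topology of pointwise convergence---and then to obtain the smooth structure purely formally, by transport along $\Phi$. First I would establish (this being exactly the material deferred to the appendix) that $\Phi\colon M\to\Gamma_{C^\infty(M)}$ is a bijection and a homeomorphism. Injectivity is immediate, since smooth bump functions separate the points of $M$. The crucial point is surjectivity: every nonzero character $\chi\colon C^\infty(M)\to\mathbb{C}$ equals $\delta_m$ for a unique $m\in M$. This is the Milnor--Mrowka type result, and it is precisely here that the standing hypotheses on $M$ (finite-dimensional, paracompact, second countable) enter: assuming the functions $f-\chi(f)\cdot 1$ had no common zero, one uses a proper smooth exhaustion function together with second countability to build an everywhere positive, hence invertible, element lying in the kernel of $\chi$, contradicting $\chi(\mathrm{invertible})\neq 0$. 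Continuity of $\Phi$ is clear from the definition of pointwise convergence, and continuity of $\Phi^{-1}$ follows because local coordinate functions are smooth and recover $m$ from $\chi=\delta_m$.

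Second, taking $\Phi$ to be a homeomorphism, I would transport the smooth structure of $M$ along $\Phi$. If $\{(U_i,\varphi_i)\}_{i\in I}$ is a smooth atlas of $M$, I declare $\{(\Phi(U_i),\,\varphi_i\circ\Phi^{-1})\}_{i\in I}$ to be an atlas of $\Gamma_{C^\infty(M)}$. Its transition maps are
\[
(\varphi_j\circ\Phi^{-1})\circ(\varphi_i\circ\Phi^{-1})^{-1}=\varphi_j\circ\varphi_i^{-1},
\]
that is, the transition maps of $M$ itself, and are therefore smooth; moreover, since $\Phi$ is a homeomorphism, the manifold topology induced by this atlas coincides with the given topology of pointwise convergence, so no clash of topologies arises. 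By construction $\Phi$ is then a diffeomorphism onto $\Gamma_{C^\infty(M)}$.

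Finally, for uniqueness I would argue by transport of structure: if two smooth structures on $\Gamma_{C^\infty(M)}$ both turn $\Phi$ into a diffeomorphism, then the identity map, written as $\Phi\circ\Phi^{-1}$, is a diffeomorphism from the first structure to the second, whence the two structures coincide. The whole difficulty is concentrated in the surjectivity assertion of the first step; once $\Phi$ is known to be a homeomorphism, the construction of the atlas and the uniqueness are entirely formal.
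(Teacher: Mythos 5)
Your proposal is correct, and at the level of the lemma itself it follows the paper's own decomposition: everything reduces to the fact that $\Phi$ is a homeomorphism for the topology of pointwise convergence (this is Proposition \ref{spec of C(M) top}), after which the smooth structure is obtained by transport along $\Phi$ and uniqueness is formal. Your explicit atlas $\{(\Phi(U_i),\varphi_i\circ\Phi^{-1})\}_{i\in I}$ and the observation that the identity, written as $\Phi\circ\Phi^{-1}$, is a diffeomorphism between any two admissible structures merely spell out what the paper's two-line proof treats as immediate.

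Where you genuinely diverge from the paper is in the proof of the crucial surjectivity statement, namely that every character of $C^{\infty}(M)$ is a point evaluation. The paper's appendix (Theorem \ref{spec of C(M,R) set}, due to Grundling and Neeb) argues via positivity: a character is non-negative on non-negative functions, it is then shown to annihilate all compactly supported functions, and a proper exhaustion function $F$ finally yields $\chi(F)\geq c$ for every $c>0$, a contradiction. Your sketch is instead the ``Milnor exercise'' argument: $g:=(F-\chi(F))^{2}$ lies in $\ker\chi$ and vanishes only on the compact set $F^{-1}(\chi(F))$, so adding to $g$ finitely many elements of the form $\overline{f}f$ with $f\in\ker\chi$ produces an everywhere positive, hence invertible, element of $\ker\chi$, which is absurd since characters cannot kill units. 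This is shorter and bypasses the positivity analysis entirely; the paper's longer route has the side benefit of also classifying the closed maximal ideals (part (a) of the cited theorem). One point in your wording needs care: the everywhere positive element must be obtained as a \emph{finite} sum of kernel elements. An infinite, locally finite sum over a compact exhaustion --- which is exactly what the paper does in part (a) --- only produces an element of the \emph{closure} of $\ker\chi$, and at this stage $\ker\chi$ is not known to be closed, because continuity of the character is not assumed (it is a consequence of the theorem, not a hypothesis). Your proper exhaustion function is precisely the device that compactifies the zero set and lets finitely many kernel elements suffice, so that step should be made explicit rather than left inside ``together with second countability''. A similar small remark applies to the continuity of $\Phi^{-1}$: the local coordinates must be cut off by a bump function $h$ to become global elements of $C^{\infty}(M)$, and one first uses convergence at $h$ itself to force a net $\delta_{m_i}\to\delta_m$ into the chart before the coordinates can be read off; this is, in substance, the openness argument in the proof of Proposition \ref{spec of C(M) top}.
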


\begin{proof}
\,\,\,Proposition \ref{spec of C(M) top} implies that the map $\Phi$ is a homeomorphism. Therefore, $\Phi$ induces a unique smooth structure on $\Gamma_{C^{\infty}(M)}$ such that $\Phi$ becomes a diffeomorphism. 
\end{proof}

The following observation is well-known, but by a lack of a reference, we give the proof:

\begin{lemma}\label{characterization of smooth maps}
A continuous map $f:M\rightarrow N$ between manifolds $M$ and $N$ is smooth if and only if the composition $g\circ f:M\rightarrow\mathbb{R}$ is smooth for each $g\in C^{\infty}(N,\mathbb{R})$.
\end{lemma}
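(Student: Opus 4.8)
The plan is to dispatch the trivial direction at once and focus all the work on the converse, whose essential feature is that smoothness is a \emph{local} property which can be tested in charts.

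First I would treat the forward implication. If $f$ is smooth and $g\in C^\infty(N,\mathbb{R})$, then $g\circ f$ is smooth as a composition of smooth maps, so this direction is immediate and requires no further argument.

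For the converse, assume $f$ is continuous and that $g\circ f$ is smooth for every $g\in C^\infty(N,\mathbb{R})$. Since smoothness is local, it suffices to show that $f$ is smooth in a neighbourhood of each point $p\in M$. I would fix such a $p$, put $q:=f(p)$, and choose a chart $(U,\varphi)$ of $N$ around $q$ with coordinate functions $\varphi=(y_1,\dots,y_n)$. Each $y_i$ is a smooth real-valued function, but only on the chart domain $U$; to feed it into the hypothesis I must first promote it to a \emph{globally} defined smooth function on all of $N$.

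This extension step is exactly where the content lies, and it is the point I expect to be the main (indeed the only) obstacle. Here I would invoke the standing assumptions on our manifolds (finite-dimensional, paracompact, second countable), which guarantee the existence of smooth bump functions: choosing an open neighbourhood $V$ of $q$ with $\overline{V}\subseteq U$ together with a function $\beta\in C^\infty(N,\mathbb{R})$ satisfying $\beta\equiv 1$ on $V$ and $\supp(\beta)\subseteq U$, I obtain global smooth functions $g_i:=\beta\cdot y_i$ (extended by zero outside $U$) that coincide with $y_i$ on $V$. By hypothesis each $g_i\circ f$ is then smooth on $M$. Restricting to the open set $f^{-1}(V)$, which is an open neighbourhood of $p$ by continuity of $f$, the chart representation of $f$ is $\varphi\circ f=(g_1\circ f,\dots,g_n\circ f)$, a smooth map into $\mathbb{R}^n$. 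Hence $f$ is smooth near $p$, and as $p$ was arbitrary, $f$ is smooth, which completes the proof.
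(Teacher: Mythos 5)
Your proof is correct and takes essentially the same route as the paper's: both reduce to a chart around $f(p)$, promote the coordinate functions to globally defined smooth functions by multiplying with a bump function supported in the chart domain, apply the hypothesis to these global functions, and use continuity of $f$ to pass to a neighbourhood of $p$ on which the extensions agree with the coordinates. The only cosmetic difference is that you work directly on $f^{-1}(V)$ where the paper fixes a neighbourhood $U$ with $f(U)\subseteq W$ and factors through the inclusion $W\hookrightarrow N$; the substance is identical.
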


\begin{proof}
\,\,\,The `` if"-direction is clear. The proof of the other direction is divided into three parts:

(i)  We first note that the map $f$ is smooth if and only if for each $m\in M$ there is an open $m$-neighbourhood $U$ such that $f_{\mid U}:U\rightarrow M$ is smooth. Therefore, let $m\in M$, $n:=f(m)$ and $(\psi,V)$ be a chart around $n$. We now choose an open $n$-neighbourhood $W$ such that $\overline{W}\subseteq V$ and a smooth function $h:N\rightarrow\mathbb{R}$ satisfying $h_{\mid \overline{W}}=1$ and $\supp(h)\subseteq V$. We further choose an open $m$-neighbourhood $U$ such that $f(U)\subseteq W$ (here, we use the continuity of the map $f$). Since the inclusion map $i:W\rightarrow N$ is smooth, it remains to prove that $f_{\mid U}:U\rightarrow W$ is smooth.

(ii) A short observation shows that the map $f_{\mid U}:U\rightarrow W$ is smooth if and only if the map $\psi\circ f_{\mid U}:U\rightarrow\mathbb{R}^n$ is smooth. If $\psi=(\psi_1,\ldots,\psi_n)$, then the last function is smooth if and only if each of its coordinate functions $\psi_i\circ f_{\mid U}:U\rightarrow\mathbb{R}$ is smooth. 

(iii) For fixed $i\in\{1,\ldots,n\}$ we now show that the coordinate function $\psi_i\circ f_{\mid U}:U\rightarrow\mathbb{R}$ is smooth. For this we note that $h_i:=h\cdot\psi_i$ defines a smooth $\mathbb{R}$-valued function on $N$ satisfying ${h_i}_{\mid W}=\psi_i$. Hence, the assumption implies that the map $h_i\circ f:M\rightarrow\mathbb{R}$ is smooth. Since the restriction of a smooth map to an open subsets is smooth again, we conclude from $f(U)\subseteq W$ that 
\[(h_i\circ f)_{\mid U}=\psi_i\circ f_{\mid U}
\]is smooth as desired. This proves the lemma.
\end{proof}


\begin{proposition}\label{smoothness of the group action on the set of characters}
If $M$ is a manifold, $G$ a Lie group and $(C^{\infty}(M),G,\alpha)$ a smooth dynamical system, then the homomorphism $\alpha:G\rightarrow\Aut(C^{\infty}(M))$ induces a smooth \emph{(}right-\emph{)} action
\begin{align*}
\sigma:M\times G\rightarrow M,\,\,\,(\delta_m,g)\mapsto\delta_m\circ\alpha(g)
\end{align*}
of the Lie group $G$ on the manifold $P$. Here, we have identified $M$ with the set of characters via the map $\Phi$ from Lemma \ref{spec as manifold}.
\end{proposition}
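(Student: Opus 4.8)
The plan is to reduce the smoothness of $\sigma$ to the smoothness of its compositions with real-valued test functions via Lemma \ref{characterization of smooth maps}, and then to recognise each such composition as a composite of the (smooth) action map of the dynamical system with the (smooth) evaluation map on $C^{\infty}(M)$. First I would record the purely algebraic facts. Since $\alpha(g)$ is an algebra automorphism and $\delta_m$ is a nonzero character, the composite $\delta_m\circ\alpha(g)$ is again a nonzero character; by the identification of $\Gamma_{C^{\infty}(M)}$ with $M$ from Lemma \ref{spec as manifold} it is of the form $\delta_{m.g}$ for a unique point $m.g\in M$, so $\sigma$ is a well-defined map $M\times G\rightarrow M$. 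The homomorphism property of $\alpha$ gives $\delta_{(m.g).h}=\delta_m\circ\alpha(g)\circ\alpha(h)=\delta_m\circ\alpha(gh)=\delta_{m.(gh)}$ and $\delta_{m.e}=\delta_m$, so $\sigma$ is a right action. The key identity is that, for every $a\in C^{\infty}(M)$,
\[
a(m.g)=\delta_{m.g}(a)=(\delta_m\circ\alpha(g))(a)=\delta_m(\alpha(g)a)=(\alpha(g)a)(m),
\]
that is, $\alpha(g)a=a\circ\rho_g$ with $\rho_g(m):=m.g$; in particular $\alpha(g)$ preserves real-valued functions.

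Next I would establish continuity, which is required before Lemma \ref{characterization of smooth maps} applies. Because $\Phi$ is a homeomorphism onto $\Gamma_{C^{\infty}(M)}$ equipped with the topology of pointwise convergence (Proposition \ref{spec of C(M) top}), the manifold topology on $M$ is the initial topology with respect to the family $C^{\infty}(M)$, so $\sigma$ is continuous as soon as $a\circ\sigma$ is continuous for every $a\in C^{\infty}(M)$. By the identity above, $a\circ\sigma$ is the map $(m,g)\mapsto(\alpha(g)a)(m)=\ev_M(\alpha(g)a,m)$, a composite of the jointly continuous action map $G\times C^{\infty}(M)\rightarrow C^{\infty}(M)$ (the continuity built into the dynamical system) followed by the continuous evaluation map $\ev_M$. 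Hence $\sigma$ is continuous.

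Finally I would prove smoothness. By Lemma \ref{characterization of smooth maps} it suffices to show that $h\circ\sigma$ is smooth for every $h\in C^{\infty}(M,\mathbb{R})$. Fixing $h$, the same identity exhibits $h\circ\sigma$ as
\[
(m,g)\longmapsto\ev_M\bigl(\alpha(g)h,\,m\bigr),
\]
i.e.\ as the composite of $(m,g)\mapsto(\alpha(g)h,m)$ with $\ev_M$. The second factor is smooth by [NeWa07], Proposition I.2 (as already used in Proposition \ref{smoothness of the group action on the algebra of smooth functions}). The first factor is smooth because its second component is a projection and its first component $g\mapsto\alpha(g)h$ is the partial map of the smooth action map $G\times C^{\infty}(M)\rightarrow C^{\infty}(M)$ obtained by fixing $h$. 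Therefore $h\circ\sigma$ is smooth for each such $h$, and $\sigma$ is a smooth map.

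I expect the main obstacle to be conceptual rather than computational: one cannot deduce smoothness of a map \emph{into} the manifold $M$ directly from smoothness of its Gelfand transforms, because $M$ is the target rather than a locally convex codomain. The device that circumvents this is precisely Lemma \ref{characterization of smooth maps}, whose continuity hypothesis forces the separate continuity argument above, so the careful bookkeeping of the two topologies on $M$ (its manifold topology versus the pointwise-convergence topology inherited from $\Gamma_{C^{\infty}(M)}$) is the point demanding the most attention.
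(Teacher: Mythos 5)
Your proof is correct and takes essentially the same route as the paper's: continuity of $\sigma$ is first obtained from the pointwise-convergence topology on the spectrum together with continuity of evaluation (the paper cites its Proposition \ref{cont. action II} for precisely the argument you reprove inline), and smoothness is then reduced via Lemma \ref{characterization of smooth maps} to the smoothness of $(m,g)\mapsto\ev_M(\alpha(g)h,m)$, a composite of the smooth orbit map $g\mapsto\alpha(g)h$ with the smooth evaluation map of [NeWa07]. Your explicit verification of well-definedness and the action axioms is a harmless addition that the paper leaves implicit.
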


\begin{proof}
\,\,\,The proof of this proposition is divided into two parts:

(i) As a first step we again use [NeWa07], Proposition I.2, which states that the evaluation map
\[\ev_M:C^{\infty}(M)\times M\rightarrow \mathbb{K},\,\,\,(f,m)\mapsto f(m).
\]is smooth. From this we conclude that the map $\sigma$ is continuous (cf. Proposition \ref{cont. action II}). 

(ii) In view of part (i), we may use Lemma \ref{characterization of smooth maps} to verify the smoothness of $\sigma$. Indeed, the map $\sigma$ is smooth if and only if the map
\[\sigma_f:M\times G\rightarrow\mathbb{R},\,\,\,(\delta_m,g)\mapsto\sigma(\delta_m,g)(f)=(\alpha(g,f))(m)
\]is smooth for each $f\in C^{\infty}(M,\mathbb{R})$. Therefore, we fix $f\in C^{\infty}(M,\mathbb{R})$ and note that we can write  $\sigma_f$ as $\ev_M\circ(\id_M\times\alpha_f)$, where $\alpha_f:G\rightarrow C^{\infty}(M)\,\,\,g\mapsto\alpha(g,f)$ denotes the smooth orbit map of $f$. Hence, the map $\sigma_f$ is smooth as a composition of smooth maps. Since $f$ was arbitrary, the map $\sigma$ is smooth.
\end{proof}

\begin{remark}\label{inverse constructions}{\bf(Inverse constructions).}
Note that the constructions of Proposition \ref{smoothness of the group action on the algebra of smooth functions} and Proposition \ref{smoothness of the group action on the set of characters} are inverse to each other.
\end{remark}

\begin{remark}\label{remark of free action in geometry}{\bf(Principal bundles).}
Since we are in particularly interested in principal bundles, it is reasonable to ask if there exist natural (algebraic) conditions on a smooth dynamical system $(C^{\infty}(P),G,\alpha)$ which ensure the freeness of the induced action $\sigma$ of $G$ on $P$ of Proposition \ref{smoothness of the group action on the set of characters}. In fact, if this is the case and if the action is additionally proper, then we obtain a principal bundle $(P,P/G,G,\pr,\sigma)$, where $\pr:P\rightarrow P/G,\,\,\,p\mapsto p.G$ denotes the corresponding orbit map. We will treat this question in the next section.
\end{remark}

\section{Free Dynamical Systems}\label{section:free dynamical systems}

In this section we introduce the concept of a \emph{free dynamical system}. Loosely speaking, we call a dynamical system $(A,G,\alpha)$ free, if the unital locally convex algebra $A$ is commutative and the topological group $G$ admits a family $(\pi_j,V_j)_{j\in J}$ of point separating representations of $G$ such that the evaluation maps defined on the ``generalized spaces of sections" $\Gamma_A V_j:=(A\otimes V_j)^G$ are surjective onto $V_j$ (evaluation with respect to elements of $\Gamma_A$) . We will in particular see how this condition implies that the induced action 
\[\sigma:\Gamma_A\times G\rightarrow\Gamma_A,\,\,\,(\chi,g)\mapsto\chi\circ\alpha(g)
\]of $G$ on the spectrum $\Gamma_A$ of $A$ is free. We start with some basics from the representation theory of (topological) groups, which will later be important for deducing the freeness property:

\begin{definition}\label{sep. the. points of G}{\bf(Separating representations).}\index{Representations!Separating}
Let $G$ be topological group. We say that a family $(\pi_j,V_j)_{j\in J}$ of (continuous) representations of $G$ \emph{separates the points of} $G$ if for each $g\in G$ with $g\neq 1_G$, there is a $j\in J$ such that $\pi_j(g)\neq\id_{V_j}$.
\end{definition}

\begin{lemma}\label{princ. bdl. cond.}
Let $G$ be a topological group and suppose that $(\pi_j,V_i)_{j\in J}$ is a family of point separating representations of $G$. If $g\in G$ is such that $\pi_j(g)=\id_{V_j}$ for all $j\in J$, then $g=1_G$.
\end{lemma}

\begin{proof}
\,\,\,The claim immediately follows from Definition \ref{sep. the. points of G}.
\end{proof}

\begin{remark}\label{faithful representations}{\bf(Faithful representations).}\index{Representations!Faithful}
We recall that each faithful representation $(\pi,V)$ of a topological group $G$ separates the points of $G$.
\end{remark}



An important class of groups that admit a family of separating representations is given by the locally compact groups:

\begin{theorem}\label{gelfand-raikov}{\bf(Gelfand--Raikov).}\index{Theorem!of Gelfand--Raikov}
Each locally compact group $G$ admits a family of continuous unitary irreducible representations that separates the points of $G$.
\end{theorem}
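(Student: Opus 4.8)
The plan is to establish the point-separating property one group element at a time: it suffices to show that for every $g\in G$ with $g\neq 1_G$ there exists a continuous unitary irreducible representation $(\pi,\mathcal{H})$ of $G$ with $\pi(g)\neq\id_{\mathcal{H}}$. The bridge between representations and analysis is the notion of a continuous \emph{positive-definite function}, i.e., a continuous function $\phi:G\rightarrow\mathbb{C}$ satisfying $\sum_{i,k}c_i\overline{c_k}\,\phi(g_k^{-1}g_i)\geq 0$ for all finite families $g_1,\dots,g_n\in G$ and $c_1,\dots,c_n\in\mathbb{C}$. By the GNS construction every such $\phi$ yields a cyclic unitary representation $(\pi_\phi,\mathcal{H}_\phi,\xi_\phi)$ with $\phi(g)=\langle\pi_\phi(g)\xi_\phi,\xi_\phi\rangle$; in particular $\pi_\phi(g)=\id_{\mathcal{H}_\phi}$ forces $\phi(g)=\phi(1_G)$. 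Thus the task reduces to producing, for each $g\neq 1_G$, a positive-definite function (and ultimately a \emph{pure} one) with $\phi(g)\neq\phi(1_G)$.

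First I would produce enough positive-definite functions to separate points, exploiting the left Haar measure of the locally compact group $G$. For $f\in C_c(G)$ set $\phi_f(x):=\langle\lambda(x)f,f\rangle=\int_G f(x^{-1}y)\,\overline{f(y)}\,dy$, where $\lambda$ denotes the left regular representation on $L^2(G)$. As a matrix coefficient of a strongly continuous unitary representation, $\phi_f$ is automatically continuous and positive-definite, since $\sum_{i,k}c_i\overline{c_k}\,\phi_f(g_k^{-1}g_i)=\|\sum_i c_i\lambda(g_i)f\|^2\geq 0$. Given $g\neq 1_G$, Hausdorffness of $G$ lets me choose a neighbourhood $U$ of $1_G$ with $gU\cap U=\emptyset$ and a nonzero $f\in C_c(G)$ with $\supp(f)\subseteq U$ and $\|f\|_2=1$; then the integrand of $\phi_f(g)$ is supported in $gU\cap U=\emptyset$, so $\phi_f(g)=0\neq 1=\phi_f(1_G)$. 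Hence the continuous positive-definite functions already separate the points of $G$, but the representations $\pi_{\phi_f}$ obtained this way are in general far from irreducible.

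The main obstacle, and the heart of the theorem, is the passage from arbitrary to irreducible representations, for which I would invoke convexity. Let $\mathcal{P}_{1}$ denote the set of continuous positive-definite functions $\phi$ with $\phi(1_G)\leq 1$. Because $|\phi(x)|\leq\phi(1_G)\leq 1$, this is a convex, weak-$*$ closed and hence weak-$*$ compact subset of $L^\infty(G)=L^1(G)^*$. Its extreme points are precisely $0$ together with the \emph{pure} normalized positive-definite functions, and a normalized $\phi$ is pure if and only if its GNS representation $\pi_\phi$ is irreducible. By the Krein--Milman theorem $\mathcal{P}_{1}$ coincides with the weak-$*$ closed convex hull of its extreme points. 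Now the functional $\phi\mapsto\phi(g)-\phi(1_G)$ is linear and weak-$*$ continuous; if it vanished on every extreme point of $\mathcal{P}_{1}$ it would vanish on all of $\mathcal{P}_{1}$, contradicting $\phi_f\in\mathcal{P}_{1}$ with $\phi_f(g)\neq\phi_f(1_G)$ from the previous paragraph. Therefore some pure $\phi$ satisfies $\phi(g)\neq\phi(1_G)$, and its GNS representation is an irreducible unitary representation with $\pi_\phi(g)\neq\id_{\mathcal{H}_\phi}$.

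Collecting these representations as $g$ ranges over $G\setminus\{1_G\}$ yields a family of continuous unitary irreducible representations that, by Lemma \ref{princ. bdl. cond.}, separates the points of $G$. The only genuinely delicate ingredients are the identification of the extreme points of $\mathcal{P}_{1}$ with pure states (equivalently, with irreducible GNS representations) and the weak-$*$ compactness underlying the application of Krein--Milman; the construction of separating positive-definite functions out of $C_c(G)$ is routine once Haar measure is available.
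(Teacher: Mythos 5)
Your strategy is the classical Gelfand--Raikov argument via positive-definite functions, GNS, and Krein--Milman (the paper itself offers no proof of this theorem; it only cites the literature), and most of the ingredients are sound: the matrix coefficients $\phi_f(x)=\langle\lambda(x)f,f\rangle$ of the regular representation are continuous, positive-definite, and separate points exactly as you argue, and the identification of the extreme points of $\mathcal{P}_1$ with $0$ and the pure normalized functions is the correct key fact. However, there is a genuine gap at the decisive step: the functional $\phi\mapsto\phi(g)-\phi(1_G)$ is \emph{not} weak-$*$ continuous on $L^\infty(G)=L^1(G)^*$. Point evaluations are not even well defined on $L^\infty$-classes, and they fail to be weak-$*$ continuous on $\mathcal{P}_1$: for $G=\mathbb{T}$ or $G=\mathbb{R}$ the characters $\phi_n(x)=e^{inx}$ lie in $\mathcal{P}_1$ and converge to $0$ in the weak-$*$ topology by the Riemann--Lebesgue lemma, while $\phi_n(1_G)=1$ for all $n$. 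So the implication ``vanishes on all extreme points $\Rightarrow$ vanishes on all of $\mathcal{P}_1$'' is unjustified for your functional; this, and not only the extreme-point identification, is the delicate heart of the theorem. (A related instance of the same problem: weak-$*$ closedness of $\mathcal{P}_1$ does not follow from the pointwise inequality $|\phi(x)|\leq 1$; one must define positivity in the integrated sense, $\int\phi\,(h^**h)\geq 0$ for $h\in L^1(G)$, and invoke the fact that such classes have continuous representatives.)

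The standard repair keeps your architecture but replaces point evaluations by operations that are honestly weak-$*$ continuous, namely translations (equivalently, pairings against $L^1(G)$). Argue by contradiction: fix $g\neq 1_G$ and suppose every continuous unitary irreducible representation satisfies $\pi(g)=\id_{\mathcal{H}}$. Then every pure normalized $\phi$ satisfies $\phi(g^{-1}x)=\langle\pi_\phi(g)^{-1}\pi_\phi(x)\xi_\phi,\xi_\phi\rangle=\phi(x)$ for all $x$, i.e.\ $L_g\phi=\phi$ where $L_g\phi(x):=\phi(g^{-1}x)$. The set $\{\phi\in L^\infty(G):L_g\phi=\phi\}$ is a convex, weak-$*$ closed subspace, because $L_g$ acting on $L^\infty(G)$ is the Banach-space adjoint of the (isometric, by left invariance of Haar measure) translation operator $u\mapsto u(g\,\cdot)$ on $L^1(G)$, and adjoints are weak-$*$ continuous. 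This subspace contains $0$ and all pure normalized functions, hence all extreme points of $\mathcal{P}_1$, hence by Krein--Milman all of $\mathcal{P}_1$. In particular $L_g\phi_f=\phi_f$ almost everywhere, hence everywhere since both sides are continuous and Haar measure has full support; evaluating at $x=g$ gives $\phi_f(1_G)=\phi_f(g)$, contradicting your computation $\phi_f(g)=0\neq 1=\phi_f(1_G)$. With this modification your outline becomes a complete and correct proof.
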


\begin{proof}
\,\,\,A proof of this statement can be found in the very nice paper [Yo49].
\end{proof}

\begin{definition}\label{sections again}{\bf(``Associated space of sections'').}\sindex[n]{$\Gamma_A V$}
Let $A$ be a unital locally convex algebra and $G$ a topological group. If $(A,G,\alpha)$ is a dynamical system and $(\pi,V)$ a (continuous) representation of $G$, then there is a natural (continuous) action of $G$ on the tensor product $A\otimes V$ defined on simple tensors by $g.(a\otimes v):=(\alpha(g).a)\otimes(\pi(g).v)$. We write
\begin{align*}
\Gamma_A V:=(A\otimes V)^G=\big\{s\in A\otimes V:(\forall g\in G)\,(\alpha(g)\otimes\id_V)(s)=(\id_A\otimes\pi(g^{-1}))(s)\big\}
\end{align*}
for the set of fixed elements under this action.
\end{definition}

\begin{lemma}
Let $(A,G,\alpha)$ be as in Definition \ref{sections again}. 
If $A^G$ is the corresponding fixed point algebra and $(\pi,V)$ a continuous representation of $G$, then the map
\[\rho:\Gamma_A V\times A^G\rightarrow\Gamma_A V,\,\,\,(a\otimes v,b)\mapsto ab\otimes v
\]defines on $\Gamma_A V$ the structure of a locally convex $A^G$-module.
\end{lemma}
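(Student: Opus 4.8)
The plan is to regard the assignment $s\mapsto\rho(s,b)$, for a fixed $b\in A^G$, as the linear operator $R_b\otimes\id_V$ on $A\otimes V$, where $R_b:A\rightarrow A,\,a\mapsto ab$ denotes right multiplication by $b$. Since the map $A\times V\rightarrow A\otimes V,\,(a,v)\mapsto ab\otimes v$ is bilinear, it factors through the tensor product, so $R_b\otimes\id_V$ is a well-defined linear endomorphism of all of $A\otimes V$ which agrees with the given formula on simple tensors; this removes the only apparent ambiguity in defining $\rho$ by its values on simple tensors. I would then establish the lemma in three steps: (i) $\rho$ actually maps $\Gamma_A V\times A^G$ into $\Gamma_A V$; (ii) $\rho$ satisfies the axioms of a (right) $A^G$-module; and (iii) $\rho$ is continuous. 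Note also that $\Gamma_A V=(A\otimes V)^G$ is a linear subspace of $A\otimes V$ (an intersection of fixed-point spaces), hence locally convex in the subspace topology, so that ``locally convex $A^G$-module'' makes sense.

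The main point, and the only step using the specific hypotheses, is (i). Here I would exploit that each $\alpha(g)$ is an \emph{algebra} automorphism and that $b$ is $G$-\emph{invariant}: for every $g\in G$ and $a\in A$ one has
\[\alpha(g)\big(R_b(a)\big)=\alpha(g)(a)\,\alpha(g)(b)=\alpha(g)(a)\,b=R_b\big(\alpha(g)(a)\big),\]
so that $\alpha(g)\circ R_b=R_b\circ\alpha(g)$ on $A$. Tensoring the left factor accordingly, the operators $\alpha(g)\otimes\pi(g)$ and $R_b\otimes\id_V$ commute on $A\otimes V$. Consequently, if $s\in\Gamma_A V=(A\otimes V)^G$, then for all $g\in G$
\[(\alpha(g)\otimes\pi(g))\big((R_b\otimes\id_V)(s)\big)=(R_b\otimes\id_V)\big((\alpha(g)\otimes\pi(g))(s)\big)=(R_b\otimes\id_V)(s),\]
i.e.\ $\rho(s,b)\in\Gamma_A V$. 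I expect this commutation identity to be the crux of the argument; everything else is formal, and in particular no commutativity of $A$ is needed.

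For (ii) the module axioms are immediate from the operator picture: additivity of $\rho$ in each slot follows from the linearity of $b\mapsto R_b$ and of $R_b\otimes\id_V$; the associativity of the action, $\rho(\rho(s,b),b')=\rho(s,bb')$, is precisely the relation $R_{b'}\circ R_b=R_{bb'}$, which is the associativity of the multiplication of $A$; and since each $\alpha(g)$ is a \emph{unital} automorphism we have $1_A\in A^G$ with $R_{1_A}=\id_A$, so $1_A$ acts as the identity. Finally, for (iii), the joint continuity of $\rho$ (with respect to the subspace topologies on $\Gamma_A V\subseteq A\otimes V$ and on $A^G\subseteq A$) follows from the joint continuity of the multiplication of the locally convex algebra $A$, since on simple tensors $\rho$ is built from this multiplication by tensoring with $\id_V$; restricting to the linear subspaces in question then exhibits $\Gamma_A V$ as a locally convex $A^G$-module, as claimed.
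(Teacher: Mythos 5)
Your proof is correct and takes essentially the same route as the paper: the paper invokes [Wa11a], Proposition D.2.5, for the locally convex $A^G$-module structure on the ambient space $A\otimes V$ and then restricts it to the subspace $\Gamma_A V$ via ``a short calculation''. Your argument simply makes both ingredients explicit --- the operator picture $R_b\otimes\id_V$ for the module structure on $A\otimes V$, and the commutation relation $\alpha(g)\circ R_b=R_b\circ\alpha(g)$ for $b\in A^G$ as the calculation showing that the $G$-fixed subspace is preserved.
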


\begin{proof}
\,\,\,According to [Wa11a], Proposition D.2.5, $A\otimes V$ carries the structure of a locally convex $A^G$-module. Thus, a short calculation shows that the same holds for the restriction to the (closed) subspace $\Gamma_A V$.
\end{proof}

\begin{remark}\label{sections of an associated vector bundle top}
Let $(P,M,G,q,\sigma)$ be a principal bundle. Further, let $(\pi,V)$ be a finite-dimensional representation of $G$ defining the associated bundle $\mathbb{V}:=P\times_{\pi} V$ over $M$. If we write
\[C^{\infty}(P,V)^G:=\{f:P\rightarrow V:\,(\forall g\in G)\,f(p.g)=\pi(g^{-1}).f(p)\}
\]for the space of equivariant smooth functions, then the map
\[\Psi_{\pi}:C^{\infty}(P,V)^G\rightarrow\Gamma\mathbb{V}\,\,\,\text{defined by}\,\,\,\Psi_{\pi}(f)(q(p)):=[p,f(p)],
\]is a topological isomorphism of $C^{\infty}(M)$-modules. Indeed, a proof of this statement can be found in [Wa11a], Corollary 3.3.7.
\end{remark}

\begin{example}\label{example of section}{\bf(The classical case).}
If $(P,M,G,q,\sigma)$ is a principal bundle, $(C^{\infty}(P),G,\alpha)$ the corresponding smooth dynamical system from Example \ref{induced transformation triples} and $(\pi,V)$ a finite-di\-men\-sio\-nal  representation of $G$, then an easy observation shows that \[C^{\infty}(P)\otimes V\cong C^{\infty}(P,V)
\] (as Fr\'echet spaces) and further that $\Gamma_{C^{\infty}(P)}V=(C^{\infty}(P)\otimes V)^G\cong C^{\infty}(P,V)^G$. In particular, Remark \ref{sections of an associated vector bundle top} implies that $\Gamma_{C^{\infty}(P)}V$ is topologically isomorphic to $\Gamma\mathbb{V}$ as $C^{\infty}(M)$-module.
\end{example}

\begin{remark} 
In view of Example \ref{example of section}, the $A^G$-module $\Gamma_A V$ generalizes the \emph{space of sections} associated to the dynamical system $(A,G,\alpha)$ and the representation $(\pi,V)$ of $G$.
\end{remark}

We now come to the central definition of this section. Note that $A$ is assumed to be a commutative algebra, since our considerations depend on the existence of enough characters:

\begin{definition}\label{free dynamical systems}{\bf (Free dynamical systems).}\index{Dynamical Systems!Free}
Let $A$ be a commutative unital locally convex algebra and $G$ a topological group. A dynamical system $(A,G,\alpha)$ is called \emph{free} if there exists a family $(\pi_j,V_j)_{j\in J}$ of point separating representations of $G$ such that the map
\[\ev^j_{\chi}:=\ev^{V_j}_{\chi}:\Gamma_A V_j\rightarrow V_j,\,\,\,a\otimes v\mapsto\chi(a)\cdot v
\]is surjective for all $j\in J$ and all $\chi\in\Gamma_A$. 
\end{definition}

\begin{theorem}\label{freeness of induced action}{\bf(Freeness of the induced action).}\index{Freeness!of the Induced Action}
If $(A,G,\alpha)$ is a free dynamical system, then the induced action
\[\sigma:\Gamma_A\times G\rightarrow\Gamma_A,\,\,\,(\chi,g)\mapsto\chi\circ\alpha(g)
\]of $G$ on the spectrum $\Gamma_A$ of $A$ is free. 
\end{theorem}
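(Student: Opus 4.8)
The plan is to show that the right action $\sigma$ is free by proving that the stabiliser of an arbitrary character is trivial; that is, whenever $\chi\in\Gamma_A$ and $g\in G$ satisfy $\chi\circ\alpha(g)=\chi$, then $g=1_G$. Since the family $(\pi_j,V_j)_{j\in J}$ separates the points of $G$, Lemma \ref{princ. bdl. cond.} reduces this to verifying that $\pi_j(g)=\id_{V_j}$ for every $j\in J$. I would therefore fix $j\in J$ and seek to transfer the fixed-point relation $\chi\circ\alpha(g)=\chi$ into a statement about the operator $\pi_j(g)$ acting on $V_j$.

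The key step is to establish an intertwining relation between the evaluation maps and the group action on the sections $\Gamma_A V_j$. First I would record two elementary identities, valid on simple tensors and hence by (bi)linearity on all of $A\otimes V_j$: on the one hand
\[\ev^j_\chi\circ(\alpha(h)\otimes\id_{V_j})=\ev^j_{\chi\circ\alpha(h)}\qquad(h\in G),\]
since $\chi(\alpha(h)(a))\cdot v=(\chi\circ\alpha(h))(a)\cdot v$; on the other hand
\[\ev^j_\chi\circ(\id_A\otimes\pi_j(h^{-1}))=\pi_j(h^{-1})\circ\ev^j_\chi\qquad(h\in G),\]
because $\chi(a)\cdot\pi_j(h^{-1})v=\pi_j(h^{-1})(\chi(a)\cdot v)$. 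Now for a section $s\in\Gamma_A V_j=(A\otimes V_j)^G$ the defining relation of Definition \ref{sections again}, namely $(\alpha(h)\otimes\id_{V_j})(s)=(\id_A\otimes\pi_j(h^{-1}))(s)$, lets me combine these two identities into
\[\ev^j_{\chi\circ\alpha(h)}(s)=\pi_j(h^{-1})\bigl(\ev^j_\chi(s)\bigr)\qquad(s\in\Gamma_A V_j,\ h\in G).\]

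With this identity in hand the conclusion follows quickly. Assuming $\chi\circ\alpha(g)=\chi$ and specialising $h=g$ in the displayed relation, the left-hand side becomes $\ev^j_\chi(s)$, so that $\pi_j(g^{-1})\bigl(\ev^j_\chi(s)\bigr)=\ev^j_\chi(s)$ for every $s\in\Gamma_A V_j$. In other words, $\pi_j(g^{-1})$ fixes every vector in the image of $\ev^j_\chi$. Here the freeness hypothesis enters decisively: by Definition \ref{free dynamical systems} the map $\ev^j_\chi$ is surjective onto $V_j$, whence $\pi_j(g^{-1})=\id_{V_j}$ and therefore $\pi_j(g)=\id_{V_j}$. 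As $j\in J$ was arbitrary, Lemma \ref{princ. bdl. cond.} yields $g=1_G$, proving freeness. I do not expect a serious obstacle in this argument: the substance lies entirely in setting up the intertwining identity correctly---in particular in keeping track of the left/right conventions and the inverses $h^{-1}$ appearing in the fixed-point characterisation---and in remembering that the two auxiliary identities, initially verified on simple tensors, must be extended by linearity (and, should the tensor product be completed, by continuity of $\chi$ and $\ev^j_\chi$) before they may be applied to a general section $s$.
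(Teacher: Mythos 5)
Your proposal is correct and follows essentially the same route as the paper: the paper applies $\chi_0\otimes\id_{V_j}$ to the defining equation $(\alpha(g_0)\otimes\id_{V_j})(s)=(\id_A\otimes\pi_j(g_0^{-1}))(s)$ of a section, which is exactly your intertwining identity $\ev^j_{\chi\circ\alpha(h)}(s)=\pi_j(h^{-1})\bigl(\ev^j_\chi(s)\bigr)$ specialised to $h=g_0$, and then likewise invokes surjectivity of $\ev^j_{\chi_0}$ together with Lemma \ref{princ. bdl. cond.} to conclude $g_0=1_G$. Your packaging of the computation as two linearity identities is merely a cleaner presentation of the same argument.
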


\begin{proof}
\,\,\,We divide the proof of this theorem into four parts:

(i) In order to verify the freeness of the map $\sigma$, we have to show that the stabilizer of each element of $\Gamma_A$ is trivial: Consequently, we fix $\chi_0\in\Gamma_A$ and let $g_0\in G$ with $\chi_0\circ\alpha(g_0)=\chi_0$.

(ii) Since $(A,G,\alpha)$ is assumed to be a free dynamical system, there exists a family $(\pi_j,V_j)_{j\in J}$ of point separating representations of $G$ for which the map
\[\ev^j_{\chi}:\Gamma_A V_j\rightarrow V_j,\,\,\,a\otimes v\mapsto\chi(a)\cdot v
\]is surjective for all $j\in J$ and all $\chi\in\Gamma_A$. In particular, we can choose $j\in J$, $v\in V_j$ and $s\in\Gamma_A V_j$ with $\ev^j_{\chi_0}(s)=v$. We recall that the element $s$ satisfies the equation
\begin{align}
(\alpha(g_0)\otimes\id_{V_j})(s)=(\id_A\otimes\pi_j(g_0^{-1}))(s).\label{freeness equation}
\end{align}

(iii) Applying $\chi_0\otimes\id_{V_j}$ to the left of equation (\ref{freeness equation}) leads to
\[((\chi_0\circ\alpha(g_0))\otimes\id_{V_j})(s)=(\chi_0\otimes\pi_j(g_0^{-1}))(s).
\]Thus, we conclude from $\chi_0\circ\alpha(g_0)=\chi_0$ that
\[(\chi_0\otimes\id_{V_j})(s)=(\chi_0\otimes\pi_j(g_0^{-1}))(s)=\pi_j(g_0^{-1})((\chi_0\otimes\id_{V_j})(s)).
\]

(iv) We finally note that $s\in\Gamma_A V_j$ implies that
\[(\chi_0\otimes\id_{V_j})(s)=\ev^j_{\chi_0}(s)=v.
\]In view of part (iii) this shows that $v=\pi_j(g_0)(v)$. As $j\in J$ and $v\in V_j$ were arbitrary, we conclude that $\pi_j(g_0)=\id_{V_j}$ for all $j\in J$ and therefore that $g_0=1_G$ (cf. Lemma \ref{princ. bdl. cond.}). This completes the proof.
\end{proof}

\section{A New Characterization of Free Group Actions in Classical Geometry}\label{ANCFGACG}

In this section we apply the results of the previous section to dynamical systems arising from group actions in classical geometry. In particular, we will see how this leads to a new characterization of free group actions. For this purpose we have to restrict our attention to Lie groups that admit a family of finite-dimensional continuous point separating representations.

\begin{definition}\label{linearizer}{\bf(The linearizer).}\index{Linearizer}
For a Lie group $G$ we define its \emph{linearizer} as the subgroup $\Lin(G)$\sindex[n]{$\Lin(G)$} which is the intersection of the kernels of all finite-dimensional continuous representations of $G$.
\end{definition}

\begin{lemma}\label{linerarizer/point sep. rep.}
Let $G$ be a Lie group. Then the following statements are equivalent:
\begin{itemize}
\item[\emph{(a)}]
The finite-dimensional continuous representations separate the points of $G$.
\item[\emph{(b)}]
$\Lin(G)=\{1_G\}$.
\end{itemize}
\end{lemma}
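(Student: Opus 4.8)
The plan is to prove the equivalence by simply unravelling the definitions of $\Lin(G)$ and of a point separating family, observing that the two conditions are contrapositives of one another. The key reformulation I would record first is that, by Definition \ref{linearizer}, an element $g\in G$ lies in $\Lin(G)$ precisely when $\pi(g)=\id_V$ holds for \emph{every} finite-dimensional continuous representation $(\pi,V)$ of $G$; this is just the statement that $g$ belongs to the kernel of each such $\pi$, and $\Lin(G)$ is by definition the intersection of all these kernels.

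For the direction (a) $\Rightarrow$ (b) I would argue that the inclusion $\{1_G\}\subseteq\Lin(G)$ is automatic, since $\pi(1_G)=\id_V$ for any representation. For the reverse inclusion, suppose $g\in\Lin(G)$ but $g\neq 1_G$. Then the separation hypothesis (a) would furnish a finite-dimensional continuous representation $(\pi_j,V_j)$ with $\pi_j(g)\neq\id_{V_j}$, contradicting the reformulation above. Hence every $g\in\Lin(G)$ equals $1_G$, so $\Lin(G)=\{1_G\}$.

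For the direction (b) $\Rightarrow$ (a) I would take the family consisting of \emph{all} finite-dimensional continuous representations of $G$ and verify that it separates the points in the sense of Definition \ref{sep. the. points of G}. Given $g\neq 1_G$, the assumption $\Lin(G)=\{1_G\}$ gives $g\notin\Lin(G)$, so by the reformulation there is at least one finite-dimensional continuous representation $(\pi,V)$ with $\pi(g)\neq\id_V$; this is exactly the required separation condition.

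Since both implications only amount to negating the universal quantifier over representations, there is no genuine obstacle here: the statement is essentially a definitional tautology. The only care needed is to be explicit that the same family (all finite-dimensional continuous representations) is used in both directions, and to dispose of the trivial inclusion $1_G\in\Lin(G)$ separately before treating the substantive reverse inclusion.
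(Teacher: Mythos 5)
Your proof is correct and follows essentially the same route as the paper, which likewise disposes of the lemma by observing that $\Lin(G)$ is non-trivial precisely when some $g\neq 1_G$ lies in the kernel of every finite-dimensional continuous representation, i.e.\ that the two conditions are negations of one another. Your write-up merely makes the two contrapositive implications explicit, which is fine but adds nothing beyond the paper's one-line argument.
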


\begin{proof}
\,\,\,We just have to note that $\Lin(G)$ is non-trivial if and only if there exists an element $g\in G$ which lies in the kernels of all finite-dimensional continuous representation of $G$.
\end{proof}

The following theorem shows that the property $\Lin(G)=\{1_G\}$ characterizes the groups which admit a faithful finite-dimensional linear representation:

\begin{theorem}\label{linear lie groups}{\bf(Existence of faithful finite-dimensional representations).}\index{Representations!Faithful Finite-Dimensional}
For a connected Lie group $G$ the following statements are equivalent:
\begin{itemize}
\item[\emph{(a)}]
There exists a faithful finite-dimensional continuous representation of $G$.
\item[\emph{(b)}]
There exists a faithful finite-dimensional continuous representation of $G$ with closed image.
\item[\emph{(c)}]
$\Lin(G)=\{1_G\}$.
\end{itemize}
\end{theorem}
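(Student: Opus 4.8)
The plan is to prove the two trivial implications directly and to reduce the whole equivalence to the single substantial implication $(c)\Rightarrow(b)$. The implication $(b)\Rightarrow(a)$ is immediate, since a faithful representation with closed image is in particular faithful. For $(a)\Rightarrow(c)$, suppose $(\pi,V)$ is a faithful finite-dimensional continuous representation; then $\ker(\pi)=\{1_G\}$, and since $\Lin(G)$ is by definition the intersection of the kernels of \emph{all} finite-dimensional continuous representations, we obtain $\Lin(G)\subseteq\ker(\pi)=\{1_G\}$, which is $(c)$. Thus everything comes down to $(c)\Rightarrow(b)$, and by Lemma \ref{linerarizer/point sep. rep.} we may rephrase hypothesis $(c)$ as: the finite-dimensional continuous representations of $G$ separate the points of $G$.

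The first step toward $(c)\Rightarrow(b)$ is to produce a single representation whose differential is injective on $\mathfrak{g}$. For each finite-dimensional representation $(\pi,V)$ the differential $d\pi\colon\mathfrak{g}\to\gl(V)$ is a Lie algebra homomorphism, and $\mathfrak{n}:=\bigcap_{\pi}\ker(d\pi)$ is an ideal of $\mathfrak{g}$. The connected integral subgroup $N\subseteq G$ with Lie algebra $\mathfrak{n}$ is annihilated by every finite-dimensional representation, hence $N\subseteq\Lin(G)=\{1_G\}$ and therefore $\mathfrak{n}=\{0\}$. Since $\mathfrak{g}$ is finite-dimensional, finitely many representations $\pi_1,\dots,\pi_k$ already satisfy $\bigcap_i\ker(d\pi_i)=\{0\}$, so that $\pi_0:=\pi_1\oplus\cdots\oplus\pi_k$ has injective differential. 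Consequently $Z:=\ker(\pi_0)$ is a discrete subgroup, and being a discrete normal subgroup of the \emph{connected} group $G$ it is central.

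It remains to enlarge $\pi_0$ by finitely many further representations so that the total kernel becomes trivial, i.e. to find representations whose restrictions to $Z$ have trivial common kernel; the direct sum with $\pi_0$ is then faithful. Here connectedness enters decisively. Using that $G\to\pi_0(G)$ is a covering of connected Lie groups (in the intrinsic topology of the immersed image) and that the fundamental group of a connected Lie group is finitely generated, $Z$ is a finitely generated abelian group, say $Z\cong\mathbb{Z}^{r}\oplus F$ with $F$ finite. The subtlety is \emph{not} separation of $Z$, which is automatic from $(c)$, but the extraction of a \emph{finite} subfamily that is already injective on $Z$: for a single free cyclic factor $\mathbb{Z}$ this forces the existence of one representation sending its generator to an element of infinite order, since the only subgroups of $\mathbb{Z}$ with trivial finite intersection involve the zero subgroup. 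That this necessarily follows from $(c)$ is the structural heart of the matter; it fails for $G=\widetilde{SL_2(\mathbb{R})}$, whose central generator $c$ satisfies $\pi(c)^2=\id$ for every finite-dimensional representation, consistently with $\Lin(\widetilde{SL_2(\mathbb{R})})=\langle c^2\rangle\neq\{1\}$. Granting this structural input (for which I would invoke the Levi--Mal'cev decomposition to reduce the analysis of the central elements to the semisimple factor), one treats the free and torsion parts of $Z$ separately and obtains the desired finite subfamily, yielding a faithful $\pi$.

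Finally one must improve a faithful representation to one with closed image. The image $\pi(G)$ is a connected immersed subgroup of $\GL(V)$ whose closure $H:=\overline{\pi(G)}$ is again a connected Lie subgroup; if $\dim H=\dim G$ then $\pi(G)$ is open in $H$, hence equal to $H$ and closed, but in general $\pi(G)$ may be a proper dense subgroup, the irrational winding on a torus being the model obstruction. Removing this defect is where the structure theory is genuinely needed: I would arrange, after enlarging the representation by additional finite-dimensional representations whose existence is guaranteed by $(c)$, that the image becomes algebraic (equivalently, that it coincides with the identity component of its closure), which forces closedness. I expect this last step, together with the finite detection of the central kernel $Z$ in the previous paragraph, to be the main obstacle; both are precisely the places where the hypothesis $\Lin(G)=\{1_G\}$ — rather than mere point-separation by some possibly infinite family — is indispensable.
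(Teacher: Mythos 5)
Your treatment of the two easy implications is correct, and the first stage of your attack on (c)$\Rightarrow$(b) is sound: the intersection $\mathfrak{n}$ of the kernels of the differentials of all finite-dimensional continuous representations is an ideal, the corresponding integral subgroup lies in $\Lin(G)=\{1_G\}$, so $\mathfrak{n}=\{0\}$, and finitely many representations combine to a $\pi_0$ with injective differential and discrete central kernel $Z=\ker(\pi_0)$. But from that point on the proposal is a plan rather than a proof, and you say so yourself: the finite separation of $Z$ --- in particular the existence, guaranteed by $\Lin(G)=\{1_G\}$ alone, of a finite-dimensional representation sending a free generator of $Z$ to an element of infinite order --- is introduced with ``granting this structural input,'' and the passage from a faithful representation to one with \emph{closed} image is introduced with ``I would arrange.'' These two steps are not technical clean-up; they are the entire content of the theorem. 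Your own example $\widetilde{SL_2(\mathbb{R})}$ shows that the discrete central kernel is exactly where faithfulness can fail, and the irrational winding shows that closedness can fail even for a faithful image. Moreover, invoking Levi--Mal'cev ``to reduce to the semisimple factor'' does not by itself produce the required representation: the delicate point is the interaction between the center of the Levi factor and the radical (central elements must be realized faithfully by twisting representations of the Levi part against representations or characters coming from the radical), and nothing in the proposal carries this out.

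For comparison, the paper does not prove these steps either: it cites [HiNe10], Theorem 15.2.7 for the equivalence (a)$\Leftrightarrow$(c), and the exercises closing Section 15.2 of [HiNe10] for (a)$\Leftrightarrow$(b). So your reduction to a discrete central kernel is a genuine start on the argument that the paper simply outsources, but as submitted the proposal establishes only the trivial implications; the two substantive ones remain assumed rather than proved, so there is a real gap.
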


\begin{proof}
\,\,\,(a) $\Leftrightarrow$ (c): The proof of this equivalence is part of [HiNe10], Theorem 15.2.7.

(a) $\Leftrightarrow$ (b): This nontrivial statement is carried out as a bunch of exercises at the end of [HiNe10], Section 15.2.
\end{proof}

\begin{remark}
In view of Lemma \ref{linerarizer/point sep. rep.} and Theorem \ref{linear lie groups}, it is exactly the \emph{linear Lie groups} that admit a family of finite-dimensional continuous point separating representations.
\end{remark}

\begin{theorem}\label{free action theorem}
Let $P$ be a manifold and $G$ be a Lie group. Then the following assertions hold:
\begin{itemize}
\item[\emph{(a)}]
If the smooth dynamical system $(C^{\infty}(P),G,\alpha)$ is free and, in addition the induced action $\sigma$ is proper, then we obtain a principal bundle $(P,P/G,G,\pr,\sigma)$ \emph{(}cf. Remark \ref{remark of free action in geometry}\emph{)}.
\item[\emph{(b)}]
Conversely, if $G$ is a linear Lie group and $(P,M,G,q,\sigma)$ a principal bundle, then the corresponding smooth dynamical system $(C^{\infty}(P),G,\alpha)$ is free.
\end{itemize}
\end{theorem}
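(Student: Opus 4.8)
The plan is to prove the two parts essentially independently, since (a) is a direct consequence of earlier machinery and (b) is the substantive geometric statement.

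For part (a), I would simply combine \autoref{freeness of induced action} with the classical principal bundle theorem. By hypothesis the smooth dynamical system $(C^{\infty}(P),G,\alpha)$ is free, so \autoref{freeness of induced action} tells us that the induced action $\sigma$ of $G$ on the spectrum $\Gamma_{C^{\infty}(P)}$ is free. Using the identification $\Phi$ from \autoref{spec as manifold}, which is a diffeomorphism $P\cong\Gamma_{C^{\infty}(P)}$, and \autoref{inverse constructions}, which says that the algebraic action on characters corresponds to the original smooth action $\sigma:P\times G\rightarrow P$, the freeness of the induced action transfers to freeness of the geometric action on $P$. Together with the additional properness assumption, the classical result quoted in the introduction (a free and proper Lie group action yields a principal bundle) then gives the principal bundle $(P,P/G,G,\pr,\sigma)$.

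For part (b), the task is to verify the freeness condition of \autoref{free dynamical systems} for the dynamical system arising from a principal bundle, under the hypothesis that $G$ is a linear Lie group. First I would use the characterization of linear Lie groups (the remark following \autoref{linear lie groups}) to obtain a family $(\pi_j,V_j)_{j\in J}$ of finite-dimensional continuous point separating representations of $G$. Then, for each $j$, I must show that the evaluation map $\ev^j_\chi:\Gamma_{C^{\infty}(P)}V_j\rightarrow V_j$ is surjective for every character $\chi$. The key tool here is \autoref{example of section}, which identifies $\Gamma_{C^{\infty}(P)}V_j$ with the space $C^{\infty}(P,V_j)^G$ of equivariant smooth functions, and the isomorphism $\Psi_{\pi_j}$ of \autoref{sections of an associated vector bundle top} identifying this with the space of sections $\Gamma\mathbb{V}_j$ of the associated vector bundle $\mathbb{V}_j=P\times_{\pi_j}V_j$ over $M$. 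Under these identifications, evaluating at the character $\chi=\delta_p$ corresponds to evaluating an equivariant function at the point $p\in P$, landing in $V_j$.

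The heart of the argument, and the step I expect to be the main obstacle, is proving surjectivity of this evaluation. Concretely, given $p\in P$ and a target vector $v\in V_j$, I need to construct an equivariant smooth function $f\in C^{\infty}(P,V_j)^G$ with $f(p)=v$. The natural idea is to build such a section locally using a local trivialization of the principal bundle near $q(p)$ (on which $P$ looks like $U\times G$, so equivariant functions are free to prescribe), and then globalize via a smooth partition of unity subordinate to a cover of $M$, averaging or patching to retain equivariance while not disturbing the prescribed value at $p$. The subtlety is ensuring the patched global section is still exactly equivariant and still hits $v$ at $p$; this is where one uses that $\pi_j$ is a genuine representation (so the local equivariant extensions transform correctly) together with paracompactness of $M$. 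Once surjectivity of each $\ev^j_\chi$ is established, all the hypotheses of \autoref{free dynamical systems} are met, and the dynamical system $(C^{\infty}(P),G,\alpha)$ is free by definition, completing part (b).
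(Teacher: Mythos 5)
Your proposal is correct and follows essentially the same route as the paper: part (a) combines Theorem \ref{freeness of induced action} with the smoothness statement of Proposition \ref{smoothness of the group action on the set of characters} and the classical quotient theorem for free and proper actions, and part (b) reduces, via the isomorphism $\Psi_{\pi}$ of Remark \ref{sections of an associated vector bundle top}, to constructing a section of the associated bundle $\mathbb{V}$ with prescribed value at $q(p)$, built locally and extended by a bump function. The one subtlety you flag --- keeping exact equivariance when patching --- dissolves if you perform the cutoff on the base rather than on $P$: multiply by a bump function on $M$ (equivalently the $G$-invariant function $h\circ q$ on $P$), which is exactly what the paper does; since $\Psi_{\pi}$ is a $C^{\infty}(M)$-module isomorphism, no averaging or partition-of-unity argument is needed.
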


\begin{proof}
\,\,\,(a) We first recall that the induced action $\sigma:P\times G\rightarrow P$ is smooth by Proposition \ref{smoothness of the group action on the set of characters}. Furthermore, Theorem \ref{freeness of induced action} implies that the map $\sigma$ is free. Since $\sigma$ is additionally assumed to be proper, the claim now follows from the Quotient Theorem (cf. [To00], Kapitel VIII, Abschnitt 21), which states that each free and proper smooth action $\sigma:P\times G\rightarrow P$ defines a principal bundle of the form $(P,P/G,G,\pr,\sigma)$.

(b) 
Since $G$ is a linear Lie group, there exists a faithful finite-dimensional continuous representation $(\pi,V)$ of $G$. In particular, this representation separates the points of $G$ (cf. Remark \ref{faithful representations}). In order to prove the freeness of the smooth dynamical system $(C^{\infty}(P),G,\alpha)$, it would therefore be enough to show that the map
\begin{align}
\ev_{p}:C^{\infty}(P,V)^G\rightarrow V,\,\,\,f \mapsto f(p)\label{check condition}
\end{align}
is surjective for all $p\in P$. We proceed as follows: 

(i) We first observe that the surjectivity of the maps (\ref{check condition}) is a local condition. Further, we (again) recall that according to Remark \ref{sections of an associated vector bundle top} the map
\[\Psi_{\pi}:C^{\infty}(P,V)^G\rightarrow\Gamma\mathbb{V},\,\,\,\Psi_{\pi}(f)(q(p)):=[p,f(p)],
\]where $\mathbb{V}$ denotes the vector bundle over $M$ associated to $(P,M,G,q,\sigma)$ via the representation $(\pi,V)$ of $G$, is a (topological) isomorphism of $C^{\infty}(M)$-modules.

(ii) Now, we choose $p\in P$, $v\in V$ and construct a smooth section $s\in\Gamma\mathbb{V}$ with $s(q(p))=[p,v]$. Indeed, such a section can always be constructed locally and then extended to the whole of $M$ by multiplying with a smooth bump function. The construction of $s$ implies that the function $f_s:=\Psi_{\pi}^{-1}(s)\in C^{\infty}(P,V)^G$ satisfies $f_s(p)=v$. As $p\in P$, $v\in V$ were arbitrary, this completes the proof.
\end{proof}

\begin{remark}
Note that Theorem \ref{free action theorem} (a) means that it is possible to test the freeness of a (smooth) group action $\sigma:P\times G\rightarrow P$ in terms of surjective maps defined on spaces of sections of associated (singular) vector bundles.
\end{remark}

\begin{remark}{\bf(Why linear Lie groups?).}
(a) The crucial idea of the proof of Theorem \ref{free action theorem} (b) is to use the identification of the space $C^{\infty}(P,V)^G$ with the space of sections $\Gamma\mathbb{V}$ of the vector bundle $\mathbb{V}$ over $M$ associated to $(P,M,G,q,\sigma)$ via the representation $(\pi,V)$ of $G$. Note that this identification only holds for smooth representations $(\pi,V)$ of $G$. Since finite-dimensional continuous representations of Lie groups are automatically smooth and there is not many literature about smooth point separating representations of Lie groups, we restrict our attention to Lie groups that admit a family of finite-dimensional continuous point separating representations. In this context, it is an interesting observation that the natural action of an arbitrary Lie group $G$ on $C^{\infty}(G,\mathbb{R})$ separates the points of $G$.
\end{remark}

The following corollary gives a one-to-one correspondence between free dynamical systems and free group action in the case where the structure group $G$ is a compact Lie group:

\begin{corollary}\label{characterization of free group actions}{\bf(Characterization of free group actions).}\index{Characterization of Free Group Actions}
Let $P$ be a manifold, $G$ a compact Lie group and $(C^{\infty}(P),G,\alpha)$ a smooth dynamical system. Then the following statements are equivalent:
\begin{itemize}
\item[\emph{(a)}]
The smooth dynamical system $(C^{\infty}(P),G,\alpha)$ is free.
\item[\emph{(b)}]
The induced smooth group action $\sigma:P\times G\rightarrow P$ is free.
\end{itemize}
In particular, in this situation the two concepts of freeness coincide.
\end{corollary}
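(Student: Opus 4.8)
The plan is to deduce both implications directly from the results already established in Sections~\ref{section:free dynamical systems} and \ref{ANCFGACG}, the key point being that for a compact Lie group two extra hypotheses appearing in Theorem~\ref{free action theorem} come for free: every compact Lie group is a linear Lie group, and every action of a compact group is proper.

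For the implication (a) $\Rightarrow$ (b) I would argue as follows. Since $C^{\infty}(P)$ is commutative, freeness of the dynamical system lets me apply Theorem~\ref{freeness of induced action}, which gives that the induced action of $G$ on the spectrum $\Gamma_{C^{\infty}(P)}$ is free. By Lemma~\ref{spec as manifold} the map $\Phi:P\rightarrow\Gamma_{C^{\infty}(P)},\ m\mapsto\delta_m$ is a diffeomorphism, and under this identification the spectrum action is exactly the induced smooth action $\sigma:P\times G\rightarrow P$ of Proposition~\ref{smoothness of the group action on the set of characters}. Freeness is a purely set-theoretic property of the orbit maps, so it transports along the bijection $\Phi$, yielding (b).

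For the converse (b) $\Rightarrow$ (a) I would first observe that, $G$ being compact, the free action $\sigma:P\times G\rightarrow P$ is automatically proper; hence the Quotient Theorem produces a principal bundle $(P,P/G,G,\pr,\sigma)$. It then remains to check that $G$ is a linear Lie group, which holds because every compact Lie group admits a faithful finite-dimensional (unitary) representation, equivalently $\Lin(G)=\{1_G\}$ by Theorem~\ref{linear lie groups}. Consequently Theorem~\ref{free action theorem}(b) applies to the principal bundle just obtained and shows that $(C^{\infty}(P),G,\alpha)$ is free, which is (a).

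The substance of the corollary is therefore already packaged in Theorem~\ref{freeness of induced action} together with the two halves of Theorem~\ref{free action theorem}; the only genuinely new ingredients are the two compactness reductions. I expect the main (though minor) obstacle to be precisely the bookkeeping that verifies these two standard properties of compact groups — properness of the action and linearity of $G$ — are exactly what discharge the hypotheses ``$\sigma$ proper'' and ``$G$ linear'' in Theorem~\ref{free action theorem}, so that compactness alone suffices and no further assumption is needed. Everything else is a transport of structure along the diffeomorphism $\Phi$ of Lemma~\ref{spec as manifold}.
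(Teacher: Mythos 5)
Your proposal is correct and follows essentially the same route as the paper: both reduce the corollary to Theorem~\ref{free action theorem} by observing that compactness of $G$ supplies the two missing hypotheses (linearity via the existence of a faithful finite-dimensional representation, and properness of $\sigma$). The only cosmetic difference is that for (a)~$\Rightarrow$~(b) you invoke Theorem~\ref{freeness of induced action} and transport freeness along $\Phi$ directly, which is just an unfolding of the proof of Theorem~\ref{free action theorem}(a) and correctly notes that properness is not needed for that direction.
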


\begin{proof}
\,\,\,We first note that each compact Lie group $G$ admits a faithful finite-dimensional continuous representation, i.e., each compact Lie group $G$ is linear. Indeed, a proof of this statement can be found in [HiNe10], Theorem 11.3.9. Moreover, since $G$ is compact, the properness of the action $\sigma$ is automatic. Hence, the equivalence follows from Theorem \ref{free action theorem}. The last statement is now a consequence of Remark \ref{inverse constructions}.
\end{proof}

\section{Free Dynamical Systems with Compact Abelian Structure Group}\label{free dyn sys casg}

In this section we rewrite the freeness condition for a dynamical system $(A,G,\alpha)$ with compact abelian structure group $G$. In particular, we present natural conditions which ensure the freeness of such a dynamical system. These conditions do not depend on the commutativity of the algebra $A$ and may therefore be transferred to the context of Noncommutative Geometry (cf. Section \ref{outlook}). Given a dynamical system $(A,G,\alpha)$ with compact abelian structure group $G$ we write $\widehat{G}$ for the character group of $G$ and 
\[A_{\varphi}:=\{a\in A:\,(\forall g\in G)\,\,\alpha(g).a=\varphi(g)\cdot a\}
\]for the isotypic component corresponding to the character $\varphi\in\widehat{G}$. 

\begin{lemma}\label{A_phi=gammaC}
Let $A$ be a unital locally convex algebra, $G$ a compact abelian group and $(A,G,\alpha)$ a dynamical system. Further let $\varphi:G\rightarrow\mathbb{C^{\times}}$ be a character and $\pi_{\varphi}:G\rightarrow\GL_1(\mathbb{C})$ be the corresponding representation given by $\pi_{\varphi}(g).z=\varphi(g)\cdot z$ for all $z\in\mathbb{C}$. Then the map 
\[\Gamma_A\mathbb{C}\rightarrow A_{\varphi^{-1}},\,\,\,a\otimes 1\mapsto a
\]is an isomorphism of locally convex spaces.
\end{lemma}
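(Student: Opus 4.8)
The plan is to exploit the canonical topological isomorphism $A\otimes\mathbb{C}\cong A$ and then translate the fixed-point condition defining $\Gamma_A\mathbb{C}$ into the isotypic condition defining $A_{\varphi^{-1}}$. First I would note that since $\mathbb{C}$ is one-dimensional, every element of $A\otimes\mathbb{C}$ is uniquely of the form $a\otimes 1$, so the assignment $a\otimes 1\mapsto a$ is a linear bijection $A\otimes\mathbb{C}\to A$; moreover it is a topological isomorphism of locally convex spaces, with continuous inverse $a\mapsto a\otimes 1$.

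Next I would unwind the definition of $\Gamma_A\mathbb{C}$ from Definition \ref{sections again}. For $s=a\otimes 1$ the fixed-point equation $(\alpha(g)\otimes\id_{\mathbb{C}})(s)=(\id_A\otimes\pi_{\varphi}(g^{-1}))(s)$ reads, on the left-hand side, $(\alpha(g).a)\otimes 1$, and on the right-hand side, $a\otimes\pi_{\varphi}(g^{-1}).1=\varphi(g^{-1})\cdot(a\otimes 1)$. Under the identification $A\otimes\mathbb{C}\cong A$ this amounts to $\alpha(g).a=\varphi(g)^{-1}\cdot a$ for all $g\in G$. Recognizing $\varphi(g)^{-1}=\varphi^{-1}(g)$, this is precisely the condition $a\in A_{\varphi^{-1}}$. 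Hence the bijection $a\otimes 1\mapsto a$ restricts to a bijection $\Gamma_A\mathbb{C}\to A_{\varphi^{-1}}$.

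Finally I would conclude that this restriction is an isomorphism of locally convex spaces: it is nothing but the topological isomorphism $A\otimes\mathbb{C}\cong A$ restricted to the subspace $\Gamma_A\mathbb{C}$ and carrying it onto the subspace $A_{\varphi^{-1}}$, where both subspaces are equipped with their subspace topologies, so continuity of the map and its inverse is inherited from the ambient isomorphism.

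The only subtle point --- really the one place where care is needed --- is the appearance of the inverse character $\varphi^{-1}$ rather than $\varphi$ itself. This is forced by the $g^{-1}$ built into the definition of $\Gamma_A V$, and it is important to keep track of it so as not to misidentify the target isotypic component; everything else in the argument is a routine bookkeeping computation on simple tensors.
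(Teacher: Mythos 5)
Your proof is correct and takes essentially the same approach as the paper's (very terse) proof, which simply observes that $a\otimes 1\in\Gamma_A\mathbb{C}$ forces $\alpha(g)(a)\otimes 1=\varphi^{-1}(g)\cdot a\otimes 1$. You merely spell out the additional details --- the canonical identification $A\otimes\mathbb{C}\cong A$, the bookkeeping that produces $\varphi^{-1}$ from the $g^{-1}$ in the definition of $\Gamma_A V$, and the inheritance of the topological isomorphism on subspaces --- that the paper leaves implicit.
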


\begin{proof}
\,\,\,For the proof we just note that $a\otimes 1\in\Gamma_A\mathbb{C}$ implies $\alpha(g)(a)\otimes 1=\varphi^{-1}(g)\cdot a\otimes 1$.
\end{proof}

\begin{remark}\label{char sep the points}{\bf(The characters separate the points).}
At this stage we recall that the characters of a compact abelian group separate the points. Indeed, this statement is a consequence of Theorem \ref{gelfand-raikov} and Schur's Lemma (cf. [Ne09], Theorem 4.2.7).
\end{remark}

\begin{proposition}\label{freeness for compact abelian groups I}{\bf(The freeness condition for compact abelian groups).}\index{Freeness!Condition for Compact Abelian Groups}
Let $A$ be a commutative unital locally convex algebra and $G$ a compact abelian group. A dynamical system $(A,G,\alpha)$ is free in the sense of Definition \ref{free dynamical systems} if the map
\[\ev^{\varphi}_{\chi}: A_{\varphi}\rightarrow \mathbb{C},\,\,\,a\mapsto\chi(a)
\]is surjective for all $\varphi\in\widehat{G}$ and all $\chi\in\Gamma_A$.
\end{proposition}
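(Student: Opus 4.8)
The plan is to exhibit explicitly a family of point separating representations meeting the requirements of Definition \ref{free dynamical systems}, namely the family $(\pi_{\varphi},\mathbb{C})_{\varphi\in\widehat{G}}$ of one-dimensional representations associated to the characters of $G$. Since $G$ is compact abelian, Remark \ref{char sep the points} guarantees that the characters, and hence this family, separate the points of $G$; so it is an admissible choice of family. It therefore remains to verify, for all $\varphi\in\widehat{G}$ and all $\chi\in\Gamma_A$, the surjectivity of the evaluation map $\Gamma_A\mathbb{C}\rightarrow\mathbb{C}$, $a\otimes 1\mapsto\chi(a)\cdot 1$, where $\Gamma_A\mathbb{C}$ is formed with respect to $\pi_{\varphi}$ (this is the map $\ev^{\varphi}_{\chi}$ of Definition \ref{free dynamical systems}, not to be confused with the map $\ev^{\varphi}_{\chi}$ on $A_\varphi$ of the present statement).

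The key step is to reinterpret these maps in terms of isotypic components by means of Lemma \ref{A_phi=gammaC}. For fixed $\varphi\in\widehat{G}$ that lemma supplies an isomorphism of locally convex spaces $\Gamma_A\mathbb{C}\rightarrow A_{\varphi^{-1}}$, $a\otimes 1\mapsto a$. Under this identification the evaluation map $a\otimes 1\mapsto\chi(a)\cdot 1$ corresponds precisely to the map $A_{\varphi^{-1}}\rightarrow\mathbb{C}$, $a\mapsto\chi(a)$, that is, to the map $\ev^{\varphi^{-1}}_{\chi}$ appearing in the proposition. Consequently, the section-space evaluation map is surjective if and only if $\ev^{\varphi^{-1}}_{\chi}$ is.

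Finally I would invoke the hypothesis. By assumption $\ev^{\psi}_{\chi}:A_{\psi}\rightarrow\mathbb{C}$ is surjective for every $\psi\in\widehat{G}$ and every $\chi\in\Gamma_A$. Since inversion $\varphi\mapsto\varphi^{-1}$ is a bijection of $\widehat{G}$, specializing to $\psi=\varphi^{-1}$ shows that $\ev^{\varphi^{-1}}_{\chi}$ is surjective for every $\varphi$ and every $\chi$, and hence so is the corresponding evaluation map on $\Gamma_A\mathbb{C}$. This is exactly the condition of Definition \ref{free dynamical systems} for the family $(\pi_{\varphi},\mathbb{C})_{\varphi\in\widehat{G}}$, so $(A,G,\alpha)$ is free.

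As for difficulty, there is no genuine obstacle here: the argument is essentially bookkeeping, combining Remark \ref{char sep the points} with Lemma \ref{A_phi=gammaC}. The only point requiring a moment's care is the character inversion $\varphi\leftrightarrow\varphi^{-1}$ forced by the contravariance built into Lemma \ref{A_phi=gammaC}, and this is harmless precisely because inversion permutes $\widehat{G}$. I would also note that the commutativity of $A$ is used only to ensure that $\Gamma_A$ is nonempty and rich enough for the statement to have content; the surjectivity reformulation itself does not rely on it, which is what makes the condition transferable to the noncommutative setting.
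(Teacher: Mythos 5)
Your proof is correct and takes essentially the same route as the paper: the paper's own proof is just a one-line citation of Remark \ref{char sep the points} and Lemma \ref{A_phi=gammaC}, and your argument spells out exactly the bookkeeping those citations suppress, including the $\varphi\leftrightarrow\varphi^{-1}$ relabelling forced by Lemma \ref{A_phi=gammaC}, which is indeed harmless since inversion permutes $\widehat{G}$.
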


\begin{proof}
\,\,\,The claim is a consequence of Proposition \ref{char sep the points} and Lemma \ref{A_phi=gammaC}. 
\end{proof}

\begin{remark}
We recall from [HoMo06], Proposition 2.42 that each compact abelian Lie group $G$ is isomorphic to $\mathbb{T}^n\times \Lambda$ for some natural number $n$ and a finite abelian group $\Lambda$. In particular, the character group $\widehat{G}$ of a compact abelian Lie group is finitely generated.
\end{remark}

\begin{corollary}\label{freeness for compact abelian groups I,5}
Let $A$ be a commutative unital locally convex algebra, $G$ a compact abelian Lie group and $(A,G,\alpha)$ a dynamical system. Further, let $(\varphi_i)_{i\in I}$ be a finite set of generators of $\widehat{G}$. Then the following two conditions are equivalent:
\begin{itemize}
\item[\emph{(a)}]
The map
\[\ev^{\varphi}_{\chi}: A_{\varphi}\rightarrow \mathbb{C},\,\,\,a\mapsto\chi(a)
\]is surjective for all $\varphi\in\widehat{G}$ and all $\chi\in\Gamma_A$.
\item[\emph{(b)}]
The map
\[\ev^{\varphi_i}_{\chi}: A_{\varphi_i}\rightarrow \mathbb{C},\,\,\,a\mapsto\chi(a)
\]is surjective for all $i\in I$ and all $\chi\in\Gamma_A$.
\end{itemize}
In particular, if one of the statements holds, then the dynamical system $(A,G,\alpha)$ is free.
\end{corollary}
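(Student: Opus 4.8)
The implication (a) $\Rightarrow$ (b) is immediate, since $\{\varphi_i : i \in I\}\subseteq\widehat{G}$ and (b) is merely the restriction of the hypothesis of (a) to this subfamily; all the content sits in (b) $\Rightarrow$ (a). For the latter I would first turn the surjectivity statement into a nonvanishing statement: as the target $\mathbb{C}$ is one-dimensional, the map $\ev^{\varphi}_{\chi}:A_{\varphi}\to\mathbb{C}$ is surjective if and only if there is some $a\in A_{\varphi}$ with $\chi(a)\neq 0$. Fixing $\chi\in\Gamma_A$, I would therefore study the set $S_{\chi}:=\{\varphi\in\widehat{G}:\exists\,a\in A_{\varphi}\text{ with }\chi(a)\neq 0\}$ and aim to prove $S_{\chi}=\widehat{G}$, which is exactly statement (a) for this $\chi$.

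The engine of the argument is the multiplicative behaviour of the isotypic components. Since each $\alpha(g)$ is an algebra automorphism, one checks that $A_{\varphi}\cdot A_{\psi}\subseteq A_{\varphi\psi}$ for all $\varphi,\psi\in\widehat{G}$, while $\chi$, being a unital homomorphism into the field $\mathbb{C}$, is multiplicative and satisfies $\chi(1)=1$ with $1\in A_{\mathbf{1}}$ (the trivial-character component). Hence, if $a\in A_{\varphi}$ and $b\in A_{\psi}$ are nonvanishing under $\chi$, then $ab\in A_{\varphi\psi}$ with $\chi(ab)=\chi(a)\chi(b)\neq 0$. This shows that $S_{\chi}$ is a submonoid of $\widehat{G}$ containing the trivial character, and (b) places every generator $\varphi_i$ into $S_{\chi}$. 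Thus $S_{\chi}$ contains the submonoid generated by the $\varphi_i$, and in particular every character of the form $\prod_i\varphi_i^{n_i}$ with all exponents $n_i\geq 0$.

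The main obstacle is precisely the passage from this submonoid to the full group $\widehat{G}$, i.e.\ reaching the \emph{inverses} of the generators. For a generator of finite order this is harmless, since $\varphi_i^{-1}=\varphi_i^{m_i-1}$ is a nonnegative power whenever $\varphi_i$ has order $m_i$; recalling that $G\cong\mathbb{T}^n\times\Lambda$ with $\Lambda$ finite, this already disposes of the entire torsion part of $\widehat{G}$. The real difficulty lies in the free part $\mathbb{Z}^n$, where $\varphi_i^{-1}$ is never a nonnegative power of the generators, so the submonoid argument alone cannot reach it. To close this gap I would show that $S_{\chi}$ is in fact a subgroup by relating nonvanishing to the stabilizer $G_{\chi}:=\{g\in G:\chi\circ\alpha(g)=\chi\}$: if $a\in A_{\varphi}$ has $\chi(a)\neq 0$, then for $g\in G_{\chi}$ the two identities $\chi(\alpha(g)a)=\varphi(g)\chi(a)$ and $\chi(\alpha(g)a)=\chi(a)$ force $\varphi(g)=1$, so every $\varphi\in S_{\chi}$ is trivial on $G_{\chi}$. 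Applying this to the generators and using that the characters of $G$ separate points (Remark \ref{char sep the points}), one obtains $G_{\chi}=\{1_G\}$.

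The delicate, and genuinely crucial, step is then to convert the triviality of $G_{\chi}$ back into nonvanishing for an arbitrary $\varphi$. Here I would exploit compactness: the Haar-integral projection $P_{\varphi}(a)=\int_G\overline{\varphi(g)}\,\alpha(g)(a)\,dg\in A_{\varphi}$ is available, and $a\mapsto(g\mapsto\chi(\alpha(g)a))$ is an algebra homomorphism from $A$ into $C(G)$ whose image separates the points of $G$ exactly because $G_{\chi}=\{1_G\}$. One must check that this point-separating, constant-containing subalgebra has a nonzero $\varphi$-Fourier coefficient for every $\varphi$, which yields $\chi(P_{\varphi}(a))\neq 0$ for a suitable $a$ and hence $\varphi\in S_{\chi}$; in the geometric case $A=C^{\infty}(P)$, $\chi=\delta_p$, this last point is immediate, since complex conjugation maps $A_{\varphi}$ to $A_{\varphi^{-1}}$ and is compatible with $\chi$. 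Once $S_{\chi}=\widehat{G}$ holds for every $\chi\in\Gamma_A$, statement (a) is established, and the concluding assertion that $(A,G,\alpha)$ is then free follows directly from Proposition \ref{freeness for compact abelian groups I}.
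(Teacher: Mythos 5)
Your reduction of surjectivity to non\-/vanishing, the submonoid structure of $S_{\chi}$, and the treatment of finite-order generators are all correct, and up to that point you are on the same road as the paper: its proof of (b)\,$\Rightarrow$\,(a) fixes $\chi$, chooses $a_{\varphi_i}\in A_{\varphi_i}$ with $\chi(a_{\varphi_i})\neq 0$, writes an arbitrary $\varphi\in\widehat{G}$ as $\varphi=\varphi_{i_1}^{n_1}\cdots\varphi_{i_k}^{n_k}$ with $n_1,\ldots,n_k\in\mathbb{Z}$, and declares $a_{\varphi}:=a_{\varphi_{i_1}}^{n_1}\cdots a_{\varphi_{i_k}}^{n_k}$ to be the required witness. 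The obstacle you isolate is therefore genuine, and it is exactly what this formula glosses over: when some $n_j<0$ the expression only makes sense if $a_{\varphi_{i_j}}$ is invertible in $A$, and hypothesis (b) supplies non-vanishing at $\chi$, not invertibility (compare Proposition \ref{set of gen}, where invertibility \emph{is} assumed and integer exponents are unproblematic; alternatively, if one reads ``generators'' as generators of $\widehat{G}$ as a \emph{monoid}, both your submonoid argument and the paper's formula terminate at once).

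However, your way of closing the gap does not work. The stabilizer computation is correct and yields $G_{\chi}=\{1_G\}$, and the image of $a\mapsto\bigl(g\mapsto\chi(\alpha(g)a)\bigr)$ does separate the points of $G$ precisely when $G_{\chi}=\{1_G\}$; the false step is the next one. A unital, point-separating subalgebra of $C(G)$ need \emph{not} have a nonzero $\varphi$-Fourier coefficient for every $\varphi\in\widehat{G}$: for $G=\mathbb{T}$, the closure of $\mathbb{C}[z]$ in $C(\mathbb{T})$ (the disc algebra) separates points and contains the constants, yet every one of its elements has vanishing Fourier coefficients in all negative modes. This is exactly the configuration $S_{\chi}=\mathbb{N}\subsetneq\mathbb{Z}=\widehat{\mathbb{T}}$ that you are trying to exclude, so the principle you invoke presupposes what is to be proved, and it is false without closure under conjugation (Stone--Weierstrass needs the $*$-operation). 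Note also that $g\mapsto\chi(\alpha(g)a)$ need not lie in $C(G)$ at all, since elements of $\Gamma_A$ are not assumed continuous. Your conjugation fallback does repair the argument, but only under hypotheses the corollary does not make, namely $A$ stable under complex conjugation and $\chi$ a $*$-character; that proves the geometric case $A=C^{\infty}(P)$, not the stated one. If you want a repair in the spirit of your approach: for a \emph{continuous} character $\chi_0$ on a complete algebra, let $H:=S_{\chi_0}\cap S_{\chi_0}^{-1}$ be the unit group of the monoid $S_{\chi_0}$ and let $P_H$ be the Haar projection onto the closed span of the $A_{\varphi}$ with $\varphi\in H$; one checks that $\chi_0\circ P_H$ is again a character of $A$, and it annihilates some $A_{\varphi_i}$ unless $H=\widehat{G}$, which contradicts (b) and forces $S_{\chi_0}=\widehat{G}$. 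For arbitrary, possibly discontinuous, characters as allowed by the paper's definition of $\Gamma_A$, neither your argument nor the paper's one-line product settles the claim.
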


\begin{proof}
\,\,\,(a) $\Rightarrow$ (b): This direction is trivial.

(b) $\Rightarrow$ (a): For the second direction we fix $\chi\in\Gamma_A$. Further, we choose for each $i\in I$ an element $a_{\varphi_i}\in A_{\varphi_i}$ with $\chi(a_{\varphi_i})\neq 0$. Now, if $\varphi\in\widehat{G}$, then there exist $k\in\mathbb{N}$ and integers $n_1,\ldots,n_k\in\mathbb{Z}$ such that 
\[\varphi=\varphi_{i_1}^{n_1}\cdots\varphi_{i_k}^{n_k}\,\,\,\text{for some}\,\,i_1,\ldots,i_k\in I.
\]Hence, the element $a_{\varphi}:=a_{\varphi_{i_1}}^{n_1}\cdots a_{\varphi_{i_k}}^{n_k}\in A_{\varphi}$ satisfies $\chi(a_{\varphi})\neq 0$.

The last assertion is a direct consequence of Proposition \ref{freeness for compact abelian groups I}.
\end{proof}

\begin{proposition}\label{freeness for compact abelian groups II}{\bf(Invertible elements in isotypic components).}\index{Invertible Elements in Isotypic Components}
Let $A$ be a commutative unital locally convex algebra, $G$ a compact abelian group and $(A,G,\alpha)$ a dynamical system. If each isotypic component $A_{\varphi}$ contains an invertible element, then the dynamical system $(A,G,\alpha)$ is free.
\end{proposition}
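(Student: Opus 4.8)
The plan is to reduce the statement to the surjectivity criterion already established in Proposition \ref{freeness for compact abelian groups I} and then to exploit the elementary fact that a character of $A$ cannot vanish on an invertible element. By Proposition \ref{freeness for compact abelian groups I}, it suffices to verify that for every $\varphi\in\widehat{G}$ and every $\chi\in\Gamma_A$ the linear map $\ev^{\varphi}_{\chi}:A_{\varphi}\rightarrow\mathbb{C}$, $a\mapsto\chi(a)$, is surjective.

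Since the target $\mathbb{C}$ is one-dimensional, surjectivity of $\ev^{\varphi}_{\chi}$ is equivalent to its nonvanishing, i.e.\ to the existence of a single element $a\in A_{\varphi}$ with $\chi(a)\neq 0$. I would therefore fix $\varphi\in\widehat{G}$ and $\chi\in\Gamma_A$, and by hypothesis choose an invertible element $a_{\varphi}\in A_{\varphi}$.

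The key step is the observation that $\chi$, being a \emph{nonzero} algebra homomorphism into $\mathbb{C}$, necessarily sends the unit to $1$: from $\chi(1)=\chi(1)^2$ one gets $\chi(1)\in\{0,1\}$, and $\chi(1)=0$ would force $\chi\equiv 0$, contradicting $\chi\in\Gamma_A$. Consequently, applying $\chi$ to the identity $a_{\varphi}a_{\varphi}^{-1}=1$ yields $\chi(a_{\varphi})\chi(a_{\varphi}^{-1})=1$, whence $\chi(a_{\varphi})\neq 0$. This shows that $\ev^{\varphi}_{\chi}$ is nonzero, hence surjective. As $\varphi$ and $\chi$ were arbitrary, Proposition \ref{freeness for compact abelian groups I} then gives the freeness of $(A,G,\alpha)$.

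There is essentially no serious obstacle in this argument; the only point requiring a little care is that invertibility of $a_{\varphi}$ is used in the whole algebra $A$ rather than in the subspace $A_{\varphi}$. Indeed, the inverse $a_{\varphi}^{-1}$ need not lie in $A_{\varphi}$, but it certainly lies in $A$, which is all that is needed for the expression $\chi(a_{\varphi}^{-1})$ to be defined and for the computation above to go through.
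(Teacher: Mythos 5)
Your proposal is correct and follows essentially the same route as the paper: both reduce the claim to the surjectivity criterion of Proposition \ref{freeness for compact abelian groups I} and then observe that a character $\chi\in\Gamma_A$ cannot vanish on an invertible element, so $\ev^{\varphi}_{\chi}$ is nonzero and hence surjective onto the one-dimensional space $\mathbb{C}$. Your write-up merely makes explicit the details the paper leaves tacit (that $\chi(1)=1$ and that $a_{\varphi}^{-1}$ need only lie in $A$, not in $A_{\varphi}$), which is fine.
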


\begin{proof}
\,\,\,The assertion easily follows from Proposition \ref{freeness for compact abelian groups I}. Indeed, if $a_{\varphi}\in A_{\varphi}$ is invertible, then $\chi(a)\neq 0$ for all $\chi\in\Gamma_A$.
\end{proof}

\begin{remark}
Note that 
it is possible to ask for invertible elements in the isotypic components even if the algebra $A$ is noncommutative. We will use this fact in the following chapter.
\end{remark}

%


\begin{proposition}\label{set of gen}
Let $A$ be a unital locally convex algebra, $G$ a compact abelian group and $(A,G,\alpha)$ a dynamical system. Further, let $(\varphi_i)_{i\in I}$ be a finite set of generators of $\widehat{G}$. Then the following two statements are equivalent:
\begin{itemize}
\item[\emph{(a)}]
$A_{\varphi}$ contains invertible elements for all $\varphi\in\widehat{G}$.
\item[\emph{(b)}]
$A_{\varphi_i}$ contains invertible elements for all $i\in I$.
\end{itemize}
In particular, if $A$ is commutative and one of the statements holds, then the dynamical system $(A,G,\alpha)$ is free.
\end{proposition}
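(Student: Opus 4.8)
The plan is to mirror the proof of Corollary~\ref{freeness for compact abelian groups I,5}, replacing the pointwise condition ``$\chi(a_{\varphi})\neq 0$'' by the stronger, character-independent condition that $a_{\varphi}$ be invertible. The implication (a) $\Rightarrow$ (b) is immediate, since each generator $\varphi_i$ is in particular an element of $\widehat{G}$; the content lies entirely in (b) $\Rightarrow$ (a).

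The key ingredient is the multiplicative behaviour of the isotypic components. First I would record that $A_{\varphi}\cdot A_{\psi}\subseteq A_{\varphi\psi}$ for all $\varphi,\psi\in\widehat{G}$: if $a\in A_{\varphi}$ and $b\in A_{\psi}$, then, since each $\alpha(g)$ is an algebra automorphism, $\alpha(g)(ab)=(\varphi(g)\,a)(\psi(g)\,b)=(\varphi\psi)(g)\,ab$. Next, if $a\in A_{\varphi}$ is invertible, applying $\alpha(g)$ to $aa^{-1}=1$ and using $\alpha(g)(a)=\varphi(g)\,a$ shows $\alpha(g)(a^{-1})=\varphi(g)^{-1}a^{-1}$, so that $a^{-1}\in A_{\varphi^{-1}}$. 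Combining these two facts by induction yields $a^{n}\in A_{\varphi^{n}}$ for every $n\in\mathbb{Z}$, where for $n=0$ one has $a^{0}=1\in A_{\varphi^{0}}=A^{G}$.

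With this in hand I would fix an arbitrary $\varphi\in\widehat{G}$ and, using that $(\varphi_i)_{i\in I}$ generates $\widehat{G}$, write $\varphi=\varphi_{i_1}^{n_1}\cdots\varphi_{i_k}^{n_k}$ with $i_1,\ldots,i_k\in I$ and $n_1,\ldots,n_k\in\mathbb{Z}$. Choosing, by (b), an invertible element $a_{\varphi_i}\in A_{\varphi_i}$ for each $i$, I would set $a_{\varphi}:=a_{\varphi_{i_1}}^{n_1}\cdots a_{\varphi_{i_k}}^{n_k}$. This element is invertible as a product of invertible elements, and by the multiplicative rule above it lies in $A_{\varphi_{i_1}^{n_1}}\cdots A_{\varphi_{i_k}^{n_k}}\subseteq A_{\varphi}$. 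Hence every isotypic component contains an invertible element, which is assertion (a). The final clause then follows by combining the established equivalence with Proposition~\ref{freeness for compact abelian groups II}: if $A$ is commutative and either condition holds, then (a) holds, and an invertible element in each $A_{\varphi}$ forces $\chi(a_{\varphi})\neq 0$ for all $\chi\in\Gamma_A$, so $(A,G,\alpha)$ is free.

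I expect no serious obstacle, but the one point requiring care is that the whole argument must be carried out \emph{without} assuming $A$ commutative, since the chosen generators $a_{\varphi_i}$ need not commute. The order in which the factors are multiplied is therefore fixed arbitrarily, and what saves the argument is that $\widehat{G}$ is abelian: the inclusions $A_{\varphi}A_{\psi}\subseteq A_{\varphi\psi}$ and the identity $a^{-1}\in A_{\varphi^{-1}}$ hold irrespective of the commutativity of $A$, so the product $a_{\varphi}$ always lands in the single well-defined isotypic component $A_{\varphi}$ regardless of the ordering.
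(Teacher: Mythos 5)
Your proof is correct and follows essentially the same route as the paper: express $\varphi$ as a word $\varphi_{i_1}^{n_1}\cdots\varphi_{i_k}^{n_k}$ in the generators, form the corresponding product $a_{\varphi_{i_1}}^{n_1}\cdots a_{\varphi_{i_k}}^{n_k}$ of invertible elements, and invoke Proposition~\ref{freeness for compact abelian groups II} for the final clause. The only difference is that you explicitly verify the facts $A_{\varphi}A_{\psi}\subseteq A_{\varphi\psi}$ and $a^{-1}\in A_{\varphi^{-1}}$ (and note that commutativity of $A$ is never needed because $\widehat{G}$ is abelian), details which the paper's proof leaves implicit.
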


\begin{proof}
\,\,\,(a) $\Rightarrow$ (b): This direction is trivial.

(b) $\Rightarrow$ (a): For each $i\in I$ we choose an invertible element $a_{\varphi_i}\in A_{\varphi_i}$. Next, if $\varphi\in\widehat{G}$, then there exist $k\in\mathbb{N}$ and integers $n_1,\ldots,n_k\in\mathbb{Z}$ such that 
\[\varphi=\varphi_{i_1}^{n_1}\cdots\varphi_{i_k}^{n_k}\,\,\,\text{for some}\,\,i_1,\ldots,i_k\in I.
\]Hence, $a_{\varphi}:=a_{\varphi_{i_1}}^{n_1}\cdots a_{\varphi_{i_k}}^{n_k}$ is an invertible element in $A_{\varphi}$.

The last assertion is a direct consequence of Proposition \ref{freeness for compact abelian groups II}.
\end{proof}



The following proposition shows that if all isotypic components of a dynamical system contain invertible elements, then they are ``mutually" isomorphic to each other as modules of the fixed point algebra:

\begin{proposition}\label{iso of A_1-modules}
Let $A$ be a commutative unital locally convex algebra and $G$ a compact abelian group. Further, let $(A,G,\alpha)$ be a dynamical system and $A^G$ the corresponding fixed point algebra. If each isotypic component $A_{\varphi}$ contains an invertible element, then the map
\[\Psi_{\varphi}:A^G\rightarrow A_{\varphi} ,\,\,\,a\mapsto a_{\varphi}a,
\]where $a_{\varphi}$ denotes some fixed invertible element in $A_{\varphi}$, is an isomorphism of locally convex $A^G$-modules for each $\varphi\in\widehat{G}$. In particular, each isotypic component $A_{\varphi}$ is a free $A^G$-module.
\end{proposition}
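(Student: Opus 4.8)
The plan is to produce an explicit inverse to $\Psi_\varphi$ and to verify that both maps are continuous and $A^G$-linear; freeness then follows immediately. The candidate inverse is
\[
\Phi_\varphi:A_\varphi\rightarrow A^G,\,\,\,b\mapsto a_\varphi^{-1}b,
\]
where $a_\varphi^{-1}$ denotes the inverse of $a_\varphi$ in $A$. Everything rests on one preliminary observation that I would record first: the inverse of an invertible element of an isotypic component lies in the isotypic component of the \emph{inverse} character. Indeed, since each $\alpha(g)$ is an algebra automorphism, applying it to the identity $a_\varphi a_\varphi^{-1}=1$ together with $\alpha(g).a_\varphi=\varphi(g)\cdot a_\varphi$ yields $\alpha(g).a_\varphi^{-1}=\varphi(g)^{-1}\cdot a_\varphi^{-1}$, that is, $a_\varphi^{-1}\in A_{\varphi^{-1}}$.

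With this in place the verification is short, and I would proceed as follows. First, $\Psi_\varphi$ maps into $A_\varphi$: for $a\in A^G$, commutativity and the automorphism property give $\alpha(g).(a_\varphi a)=\varphi(g)\cdot a_\varphi a$. Second, $\Psi_\varphi$ is $A^G$-linear, which is immediate from commutativity, $\Psi_\varphi(ba)=a_\varphi(ba)=b(a_\varphi a)=b\,\Psi_\varphi(a)$. Third, $\Phi_\varphi$ genuinely lands in $A^G$: for $b\in A_\varphi$ the product $a_\varphi^{-1}b$ pairs an element of $A_{\varphi^{-1}}$ with one of $A_\varphi$, so $\alpha(g).(a_\varphi^{-1}b)=\varphi(g)^{-1}\varphi(g)\cdot a_\varphi^{-1}b=a_\varphi^{-1}b$. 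Fourth, $\Phi_\varphi\circ\Psi_\varphi=\id_{A^G}$ and $\Psi_\varphi\circ\Phi_\varphi=\id_{A_\varphi}$ follow directly from $a_\varphi^{-1}a_\varphi=a_\varphi a_\varphi^{-1}=1$. Continuity of both maps holds because multiplication by the fixed element $a_\varphi$, respectively $a_\varphi^{-1}$, is continuous on $A$ and restricts to the subspaces in question.

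Hence $\Psi_\varphi$ is an isomorphism of locally convex $A^G$-modules, and since it identifies $A_\varphi$ with $A^G$ as a module, $A_\varphi$ is free of rank one with free generator $a_\varphi$. I do not anticipate a genuine obstacle here: the single point that requires attention is the preliminary observation $a_\varphi^{-1}\in A_{\varphi^{-1}}$, after which the argument is purely formal, consisting of placing $a$, $a_\varphi$ and $a_\varphi^{-1}$ in their correct isotypic components and multiplying using the commutativity of $A$.
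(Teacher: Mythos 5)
Your proof is correct and takes essentially the same route as the paper: the paper's own proof is a one-liner asserting that $\Psi_{\varphi}$ is a morphism of locally convex $A^G$-modules and that the isomorphism property ``follows from the fact that $a_{\varphi}$ is invertible,'' the implicit inverse being exactly your $\Phi_{\varphi}:b\mapsto a_{\varphi}^{-1}b$. Your preliminary observation that $a_{\varphi}^{-1}\in A_{\varphi^{-1}}$, which makes the well-definedness of that inverse explicit, is precisely the detail the paper leaves to the reader.
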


\begin{proof}
\,\,\,An easy calculation shows that $\Psi_{\varphi}$ is a morphism of locally convex $A^ G$-modules, and therefore the assertion follows from the fact that $a_{\varphi}\in A_{\varphi}$ is invertible. 
\end{proof}

%

We finally apply the results of this section to dynamical systems arising from classical geometry. The following theorem may be viewed as a first answer to Remark \ref{remark of free action in geometry}. A more detailed analysis can be found in Section \ref{outlook} or in [Wa11a].

\begin{theorem}\label{free action for triples}
Let $P$ be a manifold and $G$ be a compact abelian Lie group. Further, let $(C^{\infty}(P),G,\alpha)$ be a smooth dynamical system. If $\pr:P\rightarrow P/G$ denotes the orbit map corresponding to the action of $G$ on $P$ \emph{(}cf. Proposition \ref{smoothness of the group action on the set of characters}\emph{)}, then the following assertions hold:
\begin{itemize}
\item[\emph{(a)}]
If each isotypic component $C^{\infty}(P)_{\varphi}$ contains an invertible element, then we obtain a principal bundle $(P,P/G,G,\pr,\sigma)$.
\item[\emph{(b)}]
If $(\varphi_i)_{i\in I}$ is a finite set of generators of $\widehat{G}$ and each subspace $C^{\infty}(P)_{\varphi_i}$ contains an invertible element, then we obtain a principal bundle $(P,P/G,G,\pr,\sigma)$.
\end{itemize}
\end{theorem}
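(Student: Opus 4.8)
The plan is to derive Theorem~\ref{free action for triples} as a straightforward combination of the freeness criterion for compact abelian groups (Proposition~\ref{freeness for compact abelian groups II} together with Proposition~\ref{set of gen}) and the geometric passage from free (and proper) dynamical systems to principal bundles provided by Theorem~\ref{free action theorem}~(a). The key observation is that for a compact group the induced action $\sigma:P\times G\to P$ is automatically proper, so that once freeness of the dynamical system is established, Theorem~\ref{free action theorem}~(a) immediately yields the principal bundle $(P,P/G,G,\pr,\sigma)$.

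For part~(a), I would argue as follows. The algebra $C^{\infty}(P)$ is a commutative unital locally convex (Fr\'echet) algebra, and $G$ is a compact abelian group, so the hypotheses of Proposition~\ref{freeness for compact abelian groups II} are met. By assumption each isotypic component $C^{\infty}(P)_{\varphi}$ contains an invertible element; hence Proposition~\ref{freeness for compact abelian groups II} shows that the smooth dynamical system $(C^{\infty}(P),G,\alpha)$ is free in the sense of Definition~\ref{free dynamical systems}. Since $G$ is compact, the induced smooth action $\sigma$ (which is smooth by Proposition~\ref{smoothness of the group action on the set of characters}) is proper. Therefore Theorem~\ref{free action theorem}~(a) applies and produces the principal bundle $(P,P/G,G,\pr,\sigma)$, as claimed.

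For part~(b), I would first reduce to part~(a). Because $G$ is a compact abelian Lie group, its character group $\widehat{G}$ is finitely generated, and $(\varphi_i)_{i\in I}$ is by hypothesis a finite set of generators. The assumption that each $C^{\infty}(P)_{\varphi_i}$ contains an invertible element is then exactly condition~(b) of Proposition~\ref{set of gen}; by the equivalence (b)~$\Rightarrow$~(a) established there, every isotypic component $C^{\infty}(P)_{\varphi}$ with $\varphi\in\widehat{G}$ contains an invertible element (concretely, writing $\varphi=\varphi_{i_1}^{n_1}\cdots\varphi_{i_k}^{n_k}$ one forms the corresponding product of invertible elements). This brings us back precisely to the hypothesis of part~(a), whose conclusion then gives the desired principal bundle.

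I do not expect any genuine obstacle here, since both parts are deductions from results already proved in the preceding sections; the only points requiring care are verifying that the abstract freeness notion of Definition~\ref{free dynamical systems} is legitimately applicable to $A=C^{\infty}(P)$ (which it is, $C^{\infty}(P)$ being commutative and unital) and invoking the compactness of $G$ to secure properness of $\sigma$. The mild ``hard part,'' if any, is simply the bookkeeping in part~(b) of transporting invertibility from the generators $\varphi_i$ to an arbitrary character $\varphi\in\widehat{G}$, but this is exactly the content of Proposition~\ref{set of gen} and so may be cited rather than reproved.
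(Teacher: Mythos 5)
Your proposal is correct and follows essentially the same route as the paper: part (a) combines Proposition~\ref{freeness for compact abelian groups II} with Theorem~\ref{free action theorem}~(a), using compactness of $G$ for properness, and part (b) reduces to part (a) via the equivalence in Proposition~\ref{set of gen}. Your write-up merely spells out the bookkeeping that the paper's two-line proof leaves implicit.
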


\begin{proof}
\,\,\,(a) Since $G$ is compact, the induced action $\sigma$ is automatically proper. Therefore, the first assertion follows from Theorem \ref{free action theorem} and Proposition \ref{freeness for compact abelian groups II}.

(b) The second assertion follows from Proposition \ref{set of gen} (b) and part (a).
\end{proof}

\section{Strongly Free Dynamical Systems}

We introduce a stronger version of freeness for dynamical systems than the one given in Section \ref{section:free dynamical systems} (cf. Definition \ref{free dynamical systems}). In fact, instead of considering arbitrary families $(\pi_j,V_j)_{j\in J}$ of (continuous) point separating representations of a topological group $G$, we restrict our attention to families $(\pi_j,\mathcal{H}_j)_{j\in J}$ of \emph{
unitary irreducible} point separating representations. At this point, we recall that each locally compact group $G$ admits a family of continuous unitary irreducible point separating representations (cf. Theorem \ref{gelfand-raikov}). We show that Theorem \ref{freeness of induced action} and Theorem \ref{characterization of free group actions} stay true in this context of \emph{strongly free} dynamical systems and that Proposition \ref{freeness for compact abelian groups I} actually turns into a definition for strongly free dynamical systems with compact abelian structure group. We close this short section with a nice example.

\begin{definition}\label{free dynamical systems,strong}{\bf (Strongly free dynamical systems).}\index{Dynamical Systems!Strongly Free}
Let $A$ be a commutative unital locally convex algebra and $G$ a topological group. A dynamical system $(A,G,\alpha)$ is called \emph{strongly free} if there exists a family $(\pi_j,\mathcal{H}_j)_{j\in J}$ of unitary irreducible point separating representations of $G$ such that the map
\[\ev^j_{\chi}:=\ev^{\mathcal{H}_j}_{\chi}:\Gamma_A \mathcal{H}_j\rightarrow \mathcal{H}_j,\,\,\,a\otimes v\mapsto\chi(a)\cdot v
\]is surjective for all $j\in J$ and all $\chi\in\Gamma_A$. 
\end{definition}

\begin{proposition}\label{freeness of induced action,strong}{\bf(Freeness of the induced action again).}\index{Freeness!of the Induced Action}
If $(A,G,\alpha)$ is a strongly free dynamical system, then the induced action
\[\sigma:\Gamma_A\times G\rightarrow\Gamma_A,\,\,\,(\chi,g)\mapsto\chi\circ\alpha(g)
\]of $G$ on the spectrum $\Gamma_A$ of $A$ is free. 
\end{proposition}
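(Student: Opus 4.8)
The plan is to observe that every strongly free dynamical system is, in particular, a free dynamical system in the sense of Definition \ref{free dynamical systems}, so that the statement follows immediately from Theorem \ref{freeness of induced action}. Indeed, if $(A,G,\alpha)$ is strongly free, then by Definition \ref{free dynamical systems,strong} there is a family $(\pi_j,\mathcal{H}_j)_{j\in J}$ of unitary irreducible point separating representations of $G$ for which the evaluation map $\ev^j_{\chi}:\Gamma_A\mathcal{H}_j\rightarrow\mathcal{H}_j$ is surjective for every $j\in J$ and every $\chi\in\Gamma_A$. Each Hilbert space $\mathcal{H}_j$ is in particular a locally convex space, so $(\pi_j,\mathcal{H}_j)_{j\in J}$ is a family of continuous point separating representations satisfying exactly the surjectivity condition required in Definition \ref{free dynamical systems}. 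Thus this very family witnesses that $(A,G,\alpha)$ is free, and Theorem \ref{freeness of induced action} yields the freeness of the induced action $\sigma$.

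I expect no real obstacle, since the reduction above is essentially a tautology: the stronger hypothesis of strong freeness asks for a family of representations that is merely a restricted (\emph{unitary}, \emph{irreducible}) instance of the family permitted in the definition of freeness, while the conclusion to be reached is identical. In fact, inspecting the proof of Theorem \ref{freeness of induced action} confirms that the only properties of the representations used there are the point-separating property---invoked at the very end through Lemma \ref{princ. bdl. cond.}---and the surjectivity of the evaluation maps, which produces the fixed element $s\in\Gamma_A\mathcal{H}_j$ with $\ev^j_{\chi_0}(s)=v$; neither unitarity nor irreducibility enters the argument at any point.

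Should a self-contained argument be preferred over the reduction, one would simply transcribe the four steps of the proof of Theorem \ref{freeness of induced action} with $V_j$ replaced by $\mathcal{H}_j$: fix $\chi_0\in\Gamma_A$ and $g_0\in G$ with $\chi_0\circ\alpha(g_0)=\chi_0$, use surjectivity to pick $s\in\Gamma_A\mathcal{H}_j$ with $\ev^j_{\chi_0}(s)=v$, apply $\chi_0\otimes\id_{\mathcal{H}_j}$ to the fixed-point equation $(\alpha(g_0)\otimes\id_{\mathcal{H}_j})(s)=(\id_A\otimes\pi_j(g_0^{-1}))(s)$ to obtain $v=\pi_j(g_0)(v)$, and conclude $\pi_j(g_0)=\id_{\mathcal{H}_j}$ for all $j$, whence $g_0=1_G$ by Lemma \ref{princ. bdl. cond.}. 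The reduction to Theorem \ref{freeness of induced action} is of course the shorter route.
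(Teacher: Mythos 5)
Your proposal is correct and coincides with the paper's own proof, which likewise reduces the statement to Theorem \ref{freeness of induced action} by noting that every strongly free dynamical system is free. The additional remarks (that unitarity and irreducibility play no role in the argument, and the optional self-contained transcription) are accurate but not needed.
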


\begin{proof}
\,\,\,This assertion immediately follows from Theorem \ref{freeness of induced action}, since each strongly free dynamical system is free.
\end{proof}

\begin{proposition}\label{characterization of free group actions,strong}{\bf(Characterization of free group actions again).}\index{Characterization of Free Group Actions}
Let $P$ be a manifold, $G$ a compact Lie group and $(C^{\infty}(P),G,\alpha)$ a smooth dynamical system. Then the following statements are equivalent:
\begin{itemize}
\item[\emph{(a)}]
The smooth dynamical system $(C^{\infty}(P),G,\alpha)$ is strongly free.
\item[\emph{(b)}]
The induced smooth group action $\sigma:P\times G\rightarrow P$ is free.
\end{itemize}
In particular, in this situation the concepts of freeness coincide.
\end{proposition}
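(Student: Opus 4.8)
The plan is to mirror the proof of Corollary \ref{characterization of free group actions}, the only change being that the single faithful finite-dimensional representation used there is replaced by a point separating family of \emph{unitary irreducible} representations; for a compact group this replacement costs essentially nothing, thanks to the Peter--Weyl theorem.

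For the implication (a) $\Rightarrow$ (b) I would simply observe that a family of unitary irreducible point separating representations is in particular a family of point separating representations, so that every strongly free dynamical system is free in the sense of Definition \ref{free dynamical systems} (this is exactly the remark underlying Proposition \ref{freeness of induced action,strong}). The freeness of $\sigma:P\times G\to P$ then follows either directly from Corollary \ref{characterization of free group actions}, applied in the direction (a) $\Rightarrow$ (b), or from Proposition \ref{freeness of induced action,strong} combined with the identification of $\Gamma_{C^\infty(P)}$ with $P$ furnished by Lemma \ref{spec as manifold} and Proposition \ref{smoothness of the group action on the set of characters}.

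The substantive direction is (b) $\Rightarrow$ (a), and here I would replay the argument of Theorem \ref{free action theorem}(b). Since $G$ is compact the action $\sigma$ is automatically proper, so being also free it defines a principal bundle $(P,P/G,G,\pr,\sigma)$. By Theorem \ref{gelfand-raikov} the group $G$ admits a family $(\pi_j,\mathcal{H}_j)_{j\in J}$ of continuous unitary irreducible representations separating its points, and---this is where compactness enters decisively, via Peter--Weyl---each $\mathcal{H}_j$ is finite-dimensional. Finite-dimensionality is precisely what makes the vector-bundle machinery of Remark \ref{sections of an associated vector bundle top} and Example \ref{example of section} applicable: for each $j$ one forms the associated bundle $\mathbb{V}_j:=P\times_{\pi_j}\mathcal{H}_j$ and obtains a topological isomorphism $\Gamma_{C^\infty(P)}\mathcal{H}_j\cong C^\infty(P,\mathcal{H}_j)^G\cong\Gamma\mathbb{V}_j$ of $C^\infty(P/G)$-modules.

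It then remains to verify that each evaluation map $\ev^j_\chi:\Gamma_{C^\infty(P)}\mathcal{H}_j\to\mathcal{H}_j$ is surjective. Writing $\chi=\delta_p$ via Lemma \ref{spec as manifold}, the map $\ev^j_{\delta_p}$ corresponds to $\ev_p:C^\infty(P,\mathcal{H}_j)^G\to\mathcal{H}_j,\ f\mapsto f(p)$, so exactly as in Theorem \ref{free action theorem}(b) it suffices, given $p\in P$ and $v\in\mathcal{H}_j$, to construct a smooth section of $\mathbb{V}_j$ taking the value $[p,v]$ at $\pr(p)$---built locally and extended to all of $P/G$ by a smooth bump function---and to pull it back through $\Psi_{\pi_j}^{-1}$ to an equivariant $f$ with $f(p)=v$. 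This establishes strong freeness, and the concluding ``coincidence'' assertion follows from Remark \ref{inverse constructions} as in Corollary \ref{characterization of free group actions}. I expect the only genuine obstacle to be the justification that the separating unitary irreducibles may be chosen finite-dimensional; everything downstream is the associated-bundle argument already carried out for one representation in Theorem \ref{free action theorem}(b), and that single obstacle is exactly what Peter--Weyl removes and what forces the restriction to compact $G$.
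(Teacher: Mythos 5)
Your proof is correct, and its overall skeleton---settling (a) $\Rightarrow$ (b) by noting that strongly free implies free, and proving (b) $\Rightarrow$ (a) by replaying the associated-bundle section argument of Theorem \ref{free action theorem}(b) for each member of a point separating family of finite-dimensional unitary irreducible representations---is exactly the paper's. Where you genuinely diverge is in how that family is produced. You invoke Gelfand--Raikov (Theorem \ref{gelfand-raikov}) to obtain unitary irreducible point separating representations and then appeal to Peter--Weyl to conclude that every irreducible unitary representation of a compact group is finite-dimensional. The paper instead recycles the single faithful finite-dimensional representation already used in Corollary \ref{characterization of free group actions} (every compact Lie group is linear), equips it with an invariant inner product via Weyl's unitarization trick, and decomposes it into a finite direct sum of irreducible unitary subrepresentations; since the sum is faithful, the finitely many irreducible summands separate the points of $G$, and the evaluation maps for each summand are surjective by the same bundle argument you describe. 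Both routes are valid: yours makes vivid why compactness is the decisive hypothesis (it is what forces the irreducibles to be finite-dimensional, so that the vector-bundle machinery of Remark \ref{sections of an associated vector bundle top} applies), but it leans on two heavier theorems; the paper's route is more economical, using only unitarization and complete reducibility of finite-dimensional unitary representations on top of what Corollary \ref{characterization of free group actions} already required, and it produces a finite separating family rather than a possibly infinite one.
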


\begin{proof}
\,\,\,This assertion can be proved similarly to Corollary \ref{characterization of free group actions} (cf. Theorem \ref{free action theorem}): In fact, given a finite-dimensional representation $(\pi,V)$ of $G$, we just have to note that it is possible to find an inner product on $V$ such that $G$ acts by unitary transformations (``Weyl's trick") and that each unitary finite-dimensional representation of $G$ can be decomposed into the (finite) sum of irreducible representations. 
\end{proof}


\begin{proposition}\label{freeness for compact abelian groups,strong}{\bf(The strong freeness condition for compact abelian groups).}\index{Strong Freeness!Condition for Compact Abelian Groups}
Let $A$ be a commutative unital locally convex algebra and $G$ a compact abelian group. A dynamical system $(A,G,\alpha)$ is strongly free in the sense of Definition \ref{free dynamical systems,strong} if and only if the map
\[\ev^{\varphi}_{\chi}: A_{\varphi}\rightarrow \mathbb{C},\,\,\,a\mapsto\chi(a)
\]is surjective for all $\varphi\in\widehat{G}$ and all $\chi\in\Gamma_A$.
\end{proposition}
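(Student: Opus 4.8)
The plan is to reduce the whole statement to the level of characters by means of Schur's Lemma, and then to transport the surjectivity condition back and forth through Lemma \ref{A_phi=gammaC}. The structural fact I would record first is that, since $G$ is compact and abelian, every irreducible unitary representation is one-dimensional, so the unitary irreducible representations of $G$ are exactly the characters $\varphi\in\widehat{G}$, realised on $\mathbb{C}$ by $\pi_\varphi$. Combined with Remark \ref{char sep the points} (the characters separate the points) and Pontryagin duality, this identifies the admissible families in Definition \ref{free dynamical systems,strong} with families of characters, where such a family separates the points of $G$ precisely when it generates the discrete group $\widehat{G}$.

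For the implication ``condition $\Rightarrow$ strongly free'' I would simply take the family $(\pi_\varphi,\mathbb{C})_{\varphi\in\widehat{G}}$ of \emph{all} characters, which is a point-separating family of unitary irreducible representations. By Lemma \ref{A_phi=gammaC} the space $\Gamma_A\mathbb{C}$ attached to $\pi_\varphi$ is canonically $A_{\varphi^{-1}}$, and under this identification $\ev^{\pi_\varphi}_\chi$ becomes $\ev^{\varphi^{-1}}_\chi\colon A_{\varphi^{-1}}\rightarrow\mathbb{C}$. Since the hypothesis supplies surjectivity for \emph{every} character (in particular $\varphi^{-1}$) and every $\chi$, all these evaluation maps are surjective, so $(A,G,\alpha)$ is strongly free. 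This direction is clean exactly because the hypothesis already quantifies over all of $\widehat{G}$.

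For the converse ``strongly free $\Rightarrow$ condition'' I would start from a witnessing family, use Schur to write it as $(\pi_{\psi_j},\mathbb{C})_{j\in J}$, and translate surjectivity of its evaluation maps, again via Lemma \ref{A_phi=gammaC}, into the statement that each $\ev^{\psi_j^{-1}}_\chi\colon A_{\psi_j^{-1}}\rightarrow\mathbb{C}$ is surjective, i.e. that each isotypic component $A_{\psi_j^{-1}}$ contains, for every $\chi$, an element not annihilated by $\chi$. The family $\{\psi_j\}$ separates the points, so $\{\psi_j^{-1}\}$ generates $\widehat{G}$. I would then consider the set $F:=\{\varphi\in\widehat{G}:\ev^{\varphi}_\chi \text{ is surjective for all }\chi\}$ and observe that it is a submonoid of $\widehat{G}$: it contains the trivial character because $1\in A^G=A_1$ and $\chi(1)=1$, and it is closed under products because $A_\varphi\cdot A_{\varphi'}\subseteq A_{\varphi\varphi'}$ together with the multiplicativity of $\chi$. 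Since $F$ contains the generating set $\{\psi_j^{-1}\}$, it then remains to propagate surjectivity to all of $\widehat{G}$ by writing an arbitrary $\varphi$ as a product of integer powers of the $\psi_j^{-1}$ and assembling a corresponding product of chosen elements, exactly as in Corollary \ref{freeness for compact abelian groups I,5}.

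The main obstacle is precisely this last step of the converse: a point-separating family only yields the condition on a set that generates $\widehat{G}$ \emph{as a group}, whereas $F$ is a priori only a \emph{submonoid}, so one has to account for the inverse characters (equivalently, for negative exponents) --- the same delicate point already present in the proof of Corollary \ref{freeness for compact abelian groups I,5}. Once this propagation is granted one obtains $F=\widehat{G}$, which completes the equivalence; by contrast the sufficiency direction and the Schur--Lemma \ref{A_phi=gammaC} translations are routine, and the whole proposition can be read as the assertion that, for unitary irreducible representations, the sufficient condition of Proposition \ref{freeness for compact abelian groups I} becomes necessary as well.
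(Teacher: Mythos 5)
Your proposal follows the paper's own proof almost step for step: the sufficiency direction via the family of \emph{all} characters together with Lemma \ref{A_phi=gammaC}, and the necessity direction via Schur's Lemma, the fact that a point-separating family of characters generates $\widehat{G}$ (the paper cites [HoMo06], Corollary 2.3.3 for this, you invoke Pontryagin duality), followed by propagation of the surjectivity condition through products of isotypic elements. The difference is one of candour rather than of method: where the paper writes ``from this we easily conclude'', you isolate exactly which statement still has to be proved.

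But that isolated statement is a genuine gap, and your proof does not close it: ``once this propagation is granted'' concedes precisely the point at issue. As you observe, the set $F$ of characters $\varphi$ for which all maps $\ev^{\varphi}_{\chi}$ are surjective is only a \emph{submonoid} of $\widehat{G}$, while point separation of $\{\psi_j\}$ only guarantees that $\{\psi_j^{-1}\}$ generates $\widehat{G}$ \emph{as a group}; a submonoid containing a group-generating set need not be the whole group, and since the witnesses $a\in A_{\psi_j^{-1}}$ with $\chi(a)\neq 0$ need not be invertible, negative powers of them are simply unavailable. Concretely: for $G=\mathbb{T}$ the single character $\psi(w)=w$ already separates points, so a strongly free system may come with no more data than fullness of $A_{\psi^{-1}}=A_{-1}$; products then give fullness of $A_{-n}$ for all $n\geq 0$ and nothing else, and no amount of multiplying these witnesses reaches $A_{+1}$. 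Nor can you discharge the step by citing Corollary \ref{freeness for compact abelian groups I,5}, because its proof has the identical defect: it forms $a_{\varphi_{i_1}}^{n_1}\cdots a_{\varphi_{i_k}}^{n_k}$ with $n_i\in\mathbb{Z}$, which is meaningless for negative $n_i$ and non-invertible $a_{\varphi_i}$ (contrast Proposition \ref{set of gen}, where invertibility is assumed and integer powers are legitimate). So your diagnosis is correct but the converse implication remains unproved --- in your write-up and, as your own analysis in effect shows, in the paper's as well. Repairing it requires an ingredient not present in the cited results: for instance a witnessing family closed under inversion, a torsion character group (where inverses are positive powers), invertible witnesses, or some structural argument (e.g.\ averaging projections onto isotypic components) that produces elements of $A_{\varphi^{-1}}$ not annihilated by $\chi$ from the corresponding data for $A_{\varphi}$.
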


\begin{proof}
\,\,\,($``\Leftarrow"$) This direction is obvious, since the characters of the group $G$ induce a family of unitary irreducible representations that separate the points of $G$ (cf. Lemma \ref{A_phi=gammaC} and Proposition \ref{char sep the points}).

($``\Rightarrow"$) For the other direction we first note that each unitary irreducible representation $(\pi,\mathcal{H})$ of $G$ is one-dimensional by Schur's Lemma (cf. [Ne09], Theorem 4.2.7), i.e., $\pi(g).v=\varphi(g)\cdot v$ for all $g\in G$, $v\in\mathcal{H}$ and some character $\varphi$ of $\widehat{G}$. Thus, if the dynamical system $(A,G,\alpha)$ is strongly free and $(\pi_j,\mathcal{H}_j)_{j\in J}$ is a family of unitary irreducible point separating representations of $G$ satisfying the conditions of Definition \ref{free dynamical systems,strong}, then [HoMo06], Corollary 2.3.3. (i) implies that the corresponding characters $\varphi_j$ generate $\widehat{G}$ and from this we easily conclude that the map
\[\ev^{\varphi}_{\chi}: A_{\varphi}\rightarrow \mathbb{C},\,\,\,a\mapsto\chi(a)
\]is surjective for all $\varphi\in\widehat{G}$ and all $\chi\in\Gamma_A$ (cf. Lemma \ref{A_phi=gammaC}).
\end{proof}

\begin{example}
We now want to use Proposition \ref{freeness for compact abelian groups,strong} to show that the action of the group $C_2:=\{-1,+1\}$ on $\mathbb{R}$ defined by
\[\sigma:\mathbb{R}\times C_2\rightarrow\mathbb{R},\,\,\,r.(-1):=\sigma(r,-1):=-r
\]is not free: Indeed, we first note that the map
\[\Psi:C_2\rightarrow\Hom_{\text{gr}}(C_2,\mathbb{T}),\,\,\,\Psi(-1)(-1):=-1
\]is an isomorphism of abelian groups. From this we easily conclude that the isotypic component of the associated smooth dynamical system $(C^{\infty}(\mathbb{R}),C_2,\alpha)$ (cf. Proposition \ref{smoothness of the group action on the algebra of smooth functions})  corresponding to the generator $-1\in C_2$ is given by
\[C^{\infty}(\mathbb{R})_{-1}=\{f:\mathbb{R}\rightarrow\mathbb{C}:\,(\forall r\in\mathbb{R})\,f(-r)=-f(r)\}.
\]
Since $f(0)=0$ for each $f\in C^{\infty}(\mathbb{R})_{-1}$, the map
\[\ev^{-1}_{0}: C^{\infty}(\mathbb{R})_{-1}\rightarrow \mathbb{C},\,\,\,f\mapsto f(0)=0
\]is not surjective showing that $(C^{\infty}(\mathbb{R}),C_2,\alpha)$ is not strongly free (cf. Proposition \ref{freeness for compact abelian groups,strong}). Therefore, Proposition \ref{characterization of free group actions,strong} implies that the action $\sigma$ is not free.
\end{example}

\section{Some Topological Aspects of Free Dynamical Systems}

In this section we discuss some topological aspects of (free) dynamical systems. Our main goal is to provide conditions which ensure that a dynamical system induces a topological principal bundle. Again, all groups are assumed to act continuously by morphisms of algebras. We start with the following lemma:

\begin{lemma}\label{cont. action I}{\bf(Continuity of the evaluation map).}\index{Continuity!of the Evaluation Map}
Let $A$ be a commutative unital locally convex algebra. If $\Gamma_A$ is locally equicontinuous, then the evaluation map
\[\ev_A:\Gamma_A\times A\rightarrow\mathbb{C},\,\,\,(\chi,a)\mapsto\chi(a)
\]is continuous.
\end{lemma}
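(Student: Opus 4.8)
The plan is to reduce the joint continuity of $\ev_A$ at an arbitrary point to two separately controllable contributions via the triangle inequality, using the local equicontinuity hypothesis to handle the dependence on $a$ uniformly over a small set of characters. Recall that $\Gamma_A$ carries the topology of pointwise convergence on $A$, so local equicontinuity should be read as: every $\chi_0\in\Gamma_A$ admits a neighbourhood $U$ in this topology such that the family $\{\chi:\chi\in U\}$ is equicontinuous as a set of linear functionals on $A$.

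First I would fix a point $(\chi_0,a_0)\in\Gamma_A\times A$ and $\varepsilon>0$, and estimate
\[
|\chi(a)-\chi_0(a_0)|\le|\chi(a-a_0)|+|\chi(a_0)-\chi_0(a_0)|.
\]
The second summand is governed directly by the topology on $\Gamma_A$, since $\{\chi:|\chi(a_0)-\chi_0(a_0)|<\varepsilon/2\}$ is a subbasic open neighbourhood of $\chi_0$ for the topology of pointwise convergence.

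Next I would invoke local equicontinuity: choose a neighbourhood $U$ of $\chi_0$ on which $\{\chi:\chi\in U\}$ is equicontinuous. Equicontinuity then yields a $0$-neighbourhood $W\subseteq A$ with $|\chi(b)|<\varepsilon/2$ for all $\chi\in U$ and all $b\in W$; taking $b=a-a_0$ controls the first summand whenever $a\in a_0+W$, uniformly in $\chi\in U$. Setting $U':=U\cap\{\chi:|\chi(a_0)-\chi_0(a_0)|<\varepsilon/2\}$, which is again a neighbourhood of $\chi_0$, the two estimates combine on $U'\times(a_0+W)$ to give $|\chi(a)-\chi_0(a_0)|<\varepsilon$, establishing continuity at $(\chi_0,a_0)$.

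I expect the only genuine point of care to be the correct use of local equicontinuity: it is precisely this hypothesis that upgrades the merely pointwise information carried by the topology on $\Gamma_A$ into a bound on $|\chi(a-a_0)|$ that is \emph{uniform} over the relevant characters. Without it the splitting argument fails, because in the topology of pointwise convergence there is no a priori control of $\chi$ near $\chi_0$ once $a$ is also allowed to move. The triangle-inequality decomposition and the identification of subbasic neighbourhoods are otherwise entirely routine.
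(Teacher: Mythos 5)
Your proof is correct and follows essentially the same route as the paper: the same triangle-inequality splitting $|\chi(a)-\chi_0(a_0)|\le|\chi(a-a_0)|+|\chi(a_0)-\chi_0(a_0)|$, with the pointwise-convergence topology handling the second term and local equicontinuity giving the uniform control of the first. The only difference is cosmetic: the paper chooses the equicontinuous neighbourhood already inside the subbasic pointwise neighbourhood of $\chi_0$, whereas you intersect the two at the end.
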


\begin{proof}
\,\,\,To prove the continuity of $\ev_A$, we pick $(\chi_0,a_0)\in \Gamma_A\times A$, $\epsilon>0$ and choose an equicontinuous neighbourhood $V$ of $\chi_0$ in $\Gamma_A$ such that
\[V\subseteq\big\{\chi\in\Gamma_A:\,\vert(\chi-\chi_0)(a_0)\vert<\frac{\epsilon}{2}\big\}.
\]Further, we choose a neighbourhood $W$ of $a_0$ in $A$ such that $\vert\chi(a-a_0)\vert<\frac{\epsilon}{2}$ for all $a\in W$ and $\chi\in V$. We thus obtain
\[\vert \chi(a)-\chi_0(a_0)\vert\leq\vert \chi(a)-\chi(a_0)\vert+\vert \chi(a_0)-\chi_0(a_0)\vert\leq\frac{\epsilon}{2}+\frac{\epsilon}{2}=\epsilon
\]for all $\chi\in V$ and $a\in W$.
\end{proof}

\begin{remark}{\bf(Sources of algebras with equicontinuous spectrum).}\label{equicont}
(a) A unital locally convex algebra $A$ is called a \emph{continuous inverse algebra}, or CIA for short, if its group of units $A^{\times}$ is open in $A$ and the inversion 
\[\iota:A^{\times}\rightarrow A^{\times},\,\,\,a\mapsto a^{-1}
\]is continuous at $1_A$. The spectrum $\Gamma_A$ of each CIA $A$ is equicontinuous. In fact, let $U$ be a balanced $0$-neighbourhood such that $U\subseteq 1_A-A^{\times}$. Then $\vert\Gamma_A(U)\vert<1$ (cf. [Wa11a], Appendix C).

(b) Moreover, if $A$ is a \emph{$\rho$-seminormed} algebra, then [Ba00], Corollary 7.3.9 implies that $\Gamma^{\text{cont}}_A$ is equicontinuous.\index{Algebra! Seminormed}
\end{remark}

\begin{proposition}\label{cont. action II}{\bf(Continuity of the induced action map).}\index{Continuity!of the Induced Action Map}
Let $A$ be a commutative unital locally convex algebra, $G$ a topological group and $(A,G,\alpha)$ a dynamical system. If the evaluation map $\ev_A$ is continuous, then the induced action
\[\sigma:\Gamma_A\times G\rightarrow\Gamma_A,\,\,\,(\chi,g)\mapsto\chi\circ\alpha(g)
\]of $G$ on $\Gamma_A$ is continuous.
\end{proposition}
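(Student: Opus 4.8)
The plan is to exploit the fact that $\Gamma_A$ carries the topology of pointwise convergence on $A$, i.e.\ the initial topology with respect to all evaluations $\chi\mapsto\chi(a)$, $a\in A$. By the universal property of this initial topology, a map into $\Gamma_A$ is continuous precisely when composing it with each such evaluation yields a continuous scalar-valued map. Before invoking this, I would record the well-definedness point: since $\alpha(g)$ is an algebra automorphism, $\chi\circ\alpha(g)$ is again a nonzero algebra homomorphism, so $\sigma$ genuinely takes values in $\Gamma_A$.

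Accordingly, $\sigma$ is continuous if and only if for each fixed $a\in A$ the scalar map
\[
\sigma_a:\Gamma_A\times G\rightarrow\mathbb{C},\,\,\,(\chi,g)\mapsto\sigma(\chi,g)(a)=\chi(\alpha(g).a)
\]
is continuous. So I would fix $a\in A$ and reduce the whole problem to establishing the continuity of $\sigma_a$.

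The key step is to factor $\sigma_a$ through the evaluation map. Writing $\beta_a:G\rightarrow A$, $g\mapsto\alpha(g).a$ for the orbit map of $a$, one has
\[
\sigma_a=\ev_A\circ(\id_{\Gamma_A}\times\beta_a).
\]
Here $\beta_a$ is continuous because it is the composition $g\mapsto(g,a)\mapsto\alpha(g).a$ of the continuous action map $G\times A\rightarrow A$ (built into the definition of a dynamical system) with the continuous inclusion $g\mapsto(g,a)$. Hence $\id_{\Gamma_A}\times\beta_a:\Gamma_A\times G\rightarrow\Gamma_A\times A$ is continuous as a product of continuous maps, and $\ev_A$ is continuous by the standing hypothesis (a sufficient condition for which is supplied by Lemma \ref{cont. action I}). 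Therefore $\sigma_a$ is continuous as a composition, and since $a\in A$ was arbitrary, $\sigma$ is continuous.

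There is no genuine obstacle here; the argument is a routine application of the defining property of the pointwise-convergence topology together with the factorisation through $\ev_A$. The only point demanding a moment of care is the continuity of the orbit map $\beta_a$, which must be extracted from the continuity of the action $G\times A\rightarrow A$ rather than merely from the fact that each $\alpha(g)$ is continuous; as indicated above this is immediate.
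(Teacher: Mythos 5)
Your proposal is correct and follows essentially the same route as the paper: both reduce continuity of $\sigma$ to continuity of each scalar map $\sigma_a$ via the pointwise-convergence topology, and then factor $\sigma_a=\ev_A\circ(\id_{\Gamma_A}\times\alpha_a)$ through the continuous orbit map of $a$. Your extra remarks on well-definedness and on deriving the orbit map's continuity from the joint continuity of the action are sound clarifications of points the paper leaves implicit.
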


\begin{proof}
\,\,\,The topology (of pointwise convergence) on $\Gamma_A$ implies that the map $\sigma$ is continuous if and only if the maps
\[\sigma_a:\Gamma_A\times G\rightarrow\mathbb{C},\,\,\,(\chi,g)\mapsto\chi(\alpha(g)(a))
\]are continuous for all $a\in A$. Therefore, we fix $a\in A$ and note that $\sigma_a=\ev_A\circ(\id_{\Gamma_A}\times\alpha_a)$,
where 
\[\alpha_a:G\rightarrow A,\,\,\,g\mapsto\alpha(g,a)
\]denotes the continuous orbit map of $a$. In view of the assumption, the map $\sigma_a$ is continuous as a composition of continuous maps. Since $a$ was arbitrary, this proves the proposition.
\end{proof}

\begin{remark}\label{orbit map of group action}
Recall that if $\sigma:X\times G\rightarrow X$ is an action of a topological group $G$ on a topological space $X$, then the orbit map $\pr:X\rightarrow X/G$, $x\mapsto x.G:=\sigma(x,G)$ is surjective, continuous and open. 
\end{remark}

\begin{proposition}\label{cont. action III}
Let $A$ be a commutative unital locally convex algebra, $G$ a compact group and $(A,G,\alpha)$ a dynamical system. If the induced action $\sigma:\Gamma_A\times G\rightarrow\Gamma_A$ is free and continuous, then the following assertions hold:
\begin{itemize}
\item[\emph{(a)}]
For each $\chi\in\Gamma_A$ the map $\sigma_{\chi}:G\rightarrow\Gamma_A,\,\,\,g\mapsto\chi.g:=\sigma(\chi,g)$ is a homeomorphism of $G$ onto the orbit $\mathcal{O}_{\chi}$.
\item[\emph{(b)}]
If $\Gamma_A$ is locally compact, then the orbit space $\Gamma_A/G$ is locally compact and Hausdorff.
\item[\emph{(c)}]
For each pair $(\chi,\chi')\in\Gamma_A\times\Gamma_A$ with $\mathcal{O}_{\chi}=\mathcal{O}_{\chi'}$ there is a unique $\tau(\chi,\chi')\in G$ such that $\chi.\tau(\chi,\chi')=\chi'$, and the map
\[\tau:\Gamma_A\times_{\Gamma_A/G}\Gamma_A:=\{(\chi,\chi')\in\Gamma_A\times\Gamma_A:\,\pr(\chi)=\pr(\chi')\}\rightarrow G
\]is continuous and surjective.
\end{itemize}
\end{proposition}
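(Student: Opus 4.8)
The plan is to treat the three assertions in order, using throughout two structural facts: that $\Gamma_A$ is Hausdorff, being a subspace of $\mathbb{C}^A$ with the product topology (pointwise convergence), and that $G$ is compact, so that each orbit $\mathcal{O}_\chi=\sigma_\chi(G)$ is a compact (hence closed) subset of $\Gamma_A$. For (a) I would first note that $\sigma_\chi$ is continuous as a restriction of the continuous action $\sigma$ and surjective onto $\mathcal{O}_\chi$ by definition. Injectivity is precisely the freeness of $\sigma$: if $\chi.g=\chi.g'$, then applying $.g'^{-1}$ yields $\chi.(gg'^{-1})=\chi$, so $gg'^{-1}$ lies in the trivial stabilizer of $\chi$ and $g=g'$. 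A continuous bijection from the compact space $G$ onto the subspace $\mathcal{O}_\chi$ of the Hausdorff space $\Gamma_A$ is automatically a homeomorphism, which gives (a).

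For (b) I would invoke Remark \ref{orbit map of group action}, by which $\pr:\Gamma_A\to\Gamma_A/G$ is a continuous open surjection. Local compactness then transfers to the quotient: given $\pr(\chi)$, choose a compact neighbourhood $K$ of $\chi$ with interior $U$; since $\pr$ is open, $\pr(U)$ is an open set containing $\pr(\chi)$ inside the compact set $\pr(K)$, so $\pr(K)$ is a compact neighbourhood of $\pr(\chi)$. For the Hausdorff property I would use the standard criterion that, because $\pr$ is open, $\Gamma_A/G$ is Hausdorff if and only if the orbit relation $R:=\{(\chi,\chi'):\pr(\chi)=\pr(\chi')\}$ is closed in $\Gamma_A\times\Gamma_A$. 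To see $R$ is closed I argue by nets: if $(\chi_i,\chi_i')\to(\chi,\chi')$ with $\chi_i'=\chi_i.g_i$, then compactness of $G$ lets me pass to a subnet with $g_i\to g$, whence $\chi_i.g_i\to\chi.g$ by continuity of $\sigma$; comparing this with $\chi_i.g_i=\chi_i'\to\chi'$ in the Hausdorff space $\Gamma_A$ forces $\chi'=\chi.g$, so $(\chi,\chi')\in R$.

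Part (c) is where the real work lies. Well-definedness and uniqueness of $\tau(\chi,\chi')$ are immediate from freeness, exactly as in (a), and surjectivity is clear since $\tau(\chi,\chi.g)=g$ for any fixed $\chi\in\Gamma_A$ and any $g\in G$. The crux is continuity of $\tau$, and here I would again argue by nets: take $(\chi_i,\chi_i')\to(\chi,\chi')$ in $R$, set $g_i:=\tau(\chi_i,\chi_i')$ so that $\chi_i.g_i=\chi_i'$, and write $g:=\tau(\chi,\chi')$. Since a net in a compact space converges to its unique cluster point, it suffices to show $g$ is the only cluster point of $(g_i)$. If $g_{i_k}\to h$ along some subnet, then $\chi_{i_k}.g_{i_k}\to\chi.h$ by continuity of $\sigma$, whereas $\chi_{i_k}.g_{i_k}=\chi_{i_k}'\to\chi'$; Hausdorffness of $\Gamma_A$ gives $\chi.h=\chi'$, and uniqueness (freeness) then forces $h=g$. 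Thus $g$ is the unique cluster point and $g_i\to g$, establishing continuity. I expect this final step — the subnet/cluster-point argument that upgrades the pointwise inversion of (a) to \emph{joint} continuity of $\tau$ — to be the main obstacle, as it is exactly where the compactness of $G$ and the Hausdorffness of $\Gamma_A$ must be combined.
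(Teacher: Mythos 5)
Your proposal is correct and follows essentially the same route as the paper: for (a) a continuous bijection from the compact space $G$ onto a subspace of the Hausdorff space $\Gamma_A$ is a homeomorphism; for (b) openness of the orbit map together with closedness of the orbit relation $R\subseteq\Gamma_A\times\Gamma_A$ (which the paper obtains by noting that compactness of $G$ makes the action proper, and which you prove directly by the net argument underlying that fact); and for (c) the same compactness-of-$G$ net argument. If anything, your version of (c) is the more careful one: the paper passes to a convergent subnet and then asserts convergence of the whole net $\tau(\chi_i,\chi_i')\to\tau(\chi,\chi')$, which strictly requires exactly the unique-cluster-point argument you make explicit.
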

\begin{proof}
\,\,\,(a) The map $\sigma_{\chi}$ is continuous because $\sigma$. Further, the bijectivitiy of $\sigma_{\chi}$ follows from the freeness of $\sigma$. Since $G$ is compact and $\Gamma_A$ is Hausdorff, a well-known Theorem from topology now implies that $\sigma_{\chi}$ is a homeomorphism of $G$ onto the orbit $\mathcal{O}_{\chi}$.

(b) If $\Gamma_A$ is locally compact, then the orbit space $\Gamma_A/G$ is locally compact because the orbit map is open and continuous. Moreover, the compactness of $G$ implies that the action $\sigma$ is proper. Therefore, the image of the map
\[\Gamma_A\times G\rightarrow\Gamma_A\times\Gamma_A,\,\,\,(\chi,g)\mapsto(\chi,\chi.g)
\]is a closed subset of $\Gamma_A\times\Gamma_A$. Now, the assertion follows from Remark \ref{orbit map of group action} and the more general fact that the target space of a surjective, continuous, open map $f:X\rightarrow Y$ is Hausdorff if and only if the preimage of the diagonal under $f\times f$ is closed. 

(c) Suppose $\chi_i\to\chi$, $\chi'_i\to\chi'$, and $\mathcal{O}_{\chi}=\mathcal{O}_{\chi'}$ so that by definition, $\chi_i.\tau(\chi_i,\chi'_i)=\chi'_i$. Since $G$ is compact, we can assume by passing to a subnet that $\tau(\chi_i,\chi'_i)$ converges to $g$, say. Then we have 
\[\chi'=\lim_i\chi'_i=\lim_i(\chi_i\cdot\tau(\chi_i,\chi'_i))=\chi\cdot g,
\] which implies $\tau(\chi,\chi')=g$ and $\tau(\chi_i,\chi'_i)\to\tau(\chi,\chi)$.
\end{proof}

\begin{remark}{\bf(Topological principal bundles).}\index{Bundles!Topological Principal}
The map $\tau$ in Proposition \ref{cont. action III} (c) is called the \emph{translation map} and is part of the definition of principal bundles in [Hu75]. Note that, if a topological group $G$ acts freely, continuously and satisfies (c), then $G$ automatically acts properly; thus the principal bundles in [Hu75] are by definition the free and proper $G$-spaces. Note further that these principal bundles are, in general not, locally trivial. We call a free and proper $G$-space which is Hausdorff a \emph{topological principal bundle}, if each orbit of the action is homeomorphic to $G$ and the orbit space is Hausdorff. For more informations on topological (locally trivial) principal bundles we refer to [RaWi98], Chapter 4, Section 2.
\end{remark}

\begin{theorem}\label{shit theorem}
Let $A$ be a commutative CIA, $G$ a compact group and $(A,G,\alpha)$ a free dynamical system. Then the induced action $\sigma:\Gamma_A\times G\rightarrow\Gamma_A$ is continuous and we obtain a topological principal bundle
\[(\Gamma_A,\Gamma_A/G,G,\sigma,\pr).
\]
\end{theorem}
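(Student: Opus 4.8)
The theorem packages together the continuity of the induced action and the topological principal bundle structure, so the plan is to assemble it from the machinery already developed in this section rather than to prove anything from scratch. The key observation is that every hypothesis needed by the earlier results is now available: $A$ is a commutative CIA, so by Remark~\ref{equicont}~(a) its spectrum $\Gamma_A$ is equicontinuous, hence certainly locally equicontinuous. This is exactly the hypothesis of Lemma~\ref{cont. action I}, which yields the continuity of the evaluation map $\ev_A:\Gamma_A\times A\rightarrow\mathbb{C}$. Feeding this into Proposition~\ref{cont. action II} gives at once that the induced action $\sigma:\Gamma_A\times G\rightarrow\Gamma_A$ is continuous, which is the first assertion of the theorem.

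For the freeness of $\sigma$, I would invoke Theorem~\ref{freeness of induced action}: since $(A,G,\alpha)$ is assumed to be a \emph{free} dynamical system, the induced action on $\Gamma_A$ is free. Thus $\sigma$ is free and continuous, and with $G$ compact we are precisely in the situation of Proposition~\ref{cont. action III}. Part~(a) of that proposition shows each orbit map $\sigma_{\chi}:G\rightarrow\mathcal{O}_{\chi}$ is a homeomorphism, so every orbit is homeomorphic to $G$; part~(c) produces the continuous (and surjective) translation map $\tau$, which is the remaining structural datum in the definition of a topological principal bundle recalled in the preceding remark.

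The one genuine gap to watch is that Proposition~\ref{cont. action III}~(b), which supplies the Hausdorff (and local compactness) property of the orbit space $\Gamma_A/G$, is stated under the extra hypothesis that $\Gamma_A$ is \emph{locally compact}, whereas the theorem as stated assumes only that $A$ is a commutative CIA. I would therefore either (i) note that the compactness of $G$ already forces the action to be proper, so that the argument of~(b) establishing that the image of $(\chi,g)\mapsto(\chi,\chi.g)$ is closed---and hence that $\Gamma_A/G$ is Hausdorff---goes through without local compactness of $\Gamma_A$, using only the general fact quoted there about open surjections with closed diagonal preimage; or (ii) remark that the definition of \emph{topological principal bundle} given in the preceding remark requires only that the $G$-space be free, proper, Hausdorff, with orbits homeomorphic to $G$ and Hausdorff orbit space, all of which we have verified. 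Once this point is settled, the five data $(\Gamma_A,\Gamma_A/G,G,\sigma,\pr)$ satisfy all the defining conditions and we obtain the asserted topological principal bundle.

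\medskip
\noindent\textbf{I expect the main obstacle to be} reconciling the hypotheses: the cleanest route is to check that the \emph{properness} coming from compactness of $G$ (noted in the proof of Proposition~\ref{cont. action III}~(b)) is by itself enough to close the image of the orbit-graph map and thereby obtain Hausdorffness of $\Gamma_A/G$, so that the local-compactness assumption appearing in part~(b) is not actually needed for the conclusion claimed here.
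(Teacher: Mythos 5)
Your proposal is correct and follows essentially the same route as the paper's proof: equicontinuity of $\Gamma_A$ from Remark \ref{equicont}(a), then Lemma \ref{cont. action I} and Proposition \ref{cont. action II} for continuity of $\sigma$, Theorem \ref{freeness of induced action} for freeness, properness from compactness of $G$, and Proposition \ref{cont. action III} for the orbit and quotient structure. The point you flag is real---the paper simply cites Proposition \ref{cont. action III}(a) and (b) without verifying the local-compactness hypothesis of (b)---and your resolution is the correct one: the Hausdorffness of $\Gamma_A/G$ in the proof of (b) rests only on properness (closedness of the image of $(\chi,g)\mapsto(\chi,\chi.g)$) together with openness of the orbit map, and the definition of a topological principal bundle does not require local compactness of the orbit space, so the conclusion stands exactly as you argue.
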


\begin{proof}
\,\,\,We first recall from Remark \ref{equicont} that $\Gamma_A$ is equicontinuous. Hence, Lemma \ref{cont. action I} and Proposition \ref{cont. action II} imply that the map $\sigma$ is continuous. Further, we note that the map $\sigma$ is proper. Indeed, this follows from the compactness of $G$. In view of Theorem \ref{freeness of induced action}, we conclude that $\sigma$ is free. Therefore, $\Gamma_A$ is a free and proper $G$-space which is Hausdorff and thus the claim follows from Proposition \ref{cont. action III} (a) and (b). 
\end{proof}

\begin{corollary}
Let $A$ be a commutative CIA and $G$ a compact abelian group. Furthermore, let $(A,G,\alpha)$ be a dynamical system. If each isotypic component $A_{\varphi}$ contains an invertible element, then we obtain a topological principal bundle $(\Gamma_A,\Gamma_A/G,G,\sigma,\pr)$.
\end{corollary}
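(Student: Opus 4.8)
The plan is to reduce this corollary directly to the two substantive results already established: Proposition \ref{freeness for compact abelian groups II}, which converts the invertibility hypothesis on isotypic components into freeness of the dynamical system, and Theorem \ref{shit theorem}, which manufactures a topological principal bundle out of a free dynamical system over a commutative CIA with compact structure group. The corollary is essentially the observation that the output hypothesis of the former is exactly the input hypothesis of the latter.

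First I would note that a compact abelian group is in particular a compact group, so the standing assumptions of both cited results are compatible with the present setting. Next, since $A$ is commutative and each isotypic component $A_{\varphi}$ contains an invertible element, Proposition \ref{freeness for compact abelian groups II} applies and shows that the dynamical system $(A,G,\alpha)$ is free in the sense of Definition \ref{free dynamical systems}. Finally, with $A$ a commutative CIA, $G$ compact, and $(A,G,\alpha)$ now known to be free, the hypotheses of Theorem \ref{shit theorem} are met verbatim; that theorem yields that the induced action $\sigma$ is continuous and that $(\Gamma_A,\Gamma_A/G,G,\sigma,\pr)$ is a topological principal bundle, which is precisely the assertion.

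I do not anticipate any genuine obstacle here, since the real content resides in the two invoked results and this statement merely chains their hypotheses together. The one point deserving a moment's attention is that the freeness produced by Proposition \ref{freeness for compact abelian groups II} is the very same notion of freeness that Theorem \ref{shit theorem} requires as input — both refer to Definition \ref{free dynamical systems} — so no reconciliation between competing notions of freeness is needed, and the argument is a direct composition.
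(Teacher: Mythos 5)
Your proposal is correct and follows exactly the paper's own proof: the corollary is obtained by chaining Proposition \ref{freeness for compact abelian groups II} (invertible elements in each isotypic component imply freeness) with Theorem \ref{shit theorem} (a free dynamical system over a commutative CIA with compact structure group yields a topological principal bundle). Your additional remarks on the compatibility of hypotheses and the sameness of the freeness notion are accurate but not needed beyond what the paper states.
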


\begin{proof}
\,\,\,This assertion immediately follows from Proposition \ref{freeness for compact abelian groups II} and Theorem \ref{shit theorem}.
\end{proof}

\section{An Open Problem and an Application to the Structure Theory of $C^*$-Algebras}

This short section is dedicated to the following interesting open problem and the resulting application to the generalized Effros--Hahn conjecture:

\begin{open problem}{\bf (Primitive ideals).}\index{Primitive Ideals}
Theorem \ref{freeness of induced action} may be viewed as a first step towards a geometric approach to noncommutative principal bundles. Nevertheless, in order to get a broader picture, it might be helpful to get rid of the characters. This might be done with the help of primitive ideals, i.e., kernels of irreducible representations $(\rho,W)$ of the (locally convex) algebra $A$, since they can be considered as generalizations of characters (points). To be more precise:

Let $(A,G,\alpha)$ be a dynamical system, consisting of a (not necessarily commutative) unital locally convex algebra $A$, a topological group $G$ and a group homomorphism $\alpha:G\rightarrow\Aut(A)$, which induces a continuous action of $G$ on $A$. Further, let $\Prim(A)$\sindex[n]{$\Prim(A)$} denote the set of primitive ideals of $A$. As already mentioned, note that if $A$ is commutative, then $\Prim(A)\cong\Gamma_A$. Do there exist ``geometrically oriented" conditions which ensure that the corresponding action
\[\sigma:\Prim(A)\times G\rightarrow\Prim(A),\,\,\,(I,g)\mapsto\alpha(g).I
\]of $G$ on the primitive ideals $\Prim(A)$ of $A$ is free? For this purpose, it seems to be useful to study the paper [Ph09]
\end{open problem}

An interesting application to the structure theory of $C^*$-algebras is given by the ``generalized Effros--Hahn Conjecture":

\begin{theorem}{\bf(The generalized Effros--Hahn conjecture).}\index{Theorem!of Effros--Hahn}
Suppose $G$ is an amenable group, $A$ a separable $C^*$-algebra and $(A,G,\alpha)$ a $C^*$-dynamical system. If $G$ acts freely on $\Prim(A)$,  then there is one and only one primitive ideal of the crossed product $A\rtimes_{\alpha}G$ lying over each hull-kernel quasi-orbit in $\Prim(A)$. In particular, if every orbit is also hull-kernel dense, then $A\rtimes_{\alpha}G$ is simple.
\end{theorem}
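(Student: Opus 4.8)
The plan is to reduce the statement to the Gootman--Rosenberg solution of the Effros--Hahn conjecture, which asserts that for a separable $C^*$-dynamical system with $G$ amenable every primitive ideal of $A\rtimes_{\alpha}G$ is induced, in the sense of Rieffel, from a primitive ideal of the stabiliser crossed product $A\rtimes_{\alpha}G_P$, where $G_P$ denotes the isotropy group of the corresponding point $P\in\Prim(A)$. Since $G$ is assumed to act freely on $\Prim(A)$, every isotropy group $G_P$ is trivial, so that $A\rtimes_{\alpha}G_P\cong A$ and the induction is simply induction from the single primitive ideal $P$ of $A$. The whole argument therefore turns on controlling the fibres of the canonical restriction map from $\Prim(A\rtimes_{\alpha}G)$ to the quasi-orbit space of $\Prim(A)$.

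First I would set up the quasi-orbit space: for $P\in\Prim(A)$ let its quasi-orbit be $\{Q:\overline{G.Q}=\overline{G.P}\}$, where the closures are taken in the hull-kernel topology, and let $\mathcal{Q}$ denote the resulting $T_0$-ization of the orbit space $\Prim(A)/G$. There is a canonical continuous, open, surjective map $\mathrm{Res}:\Prim(A\rtimes_{\alpha}G)\to\mathcal{Q}$ sending a primitive ideal $\mathfrak{P}$ to the quasi-orbit determined by restricting, back to $A$, an irreducible representation with kernel $\mathfrak{P}$ (this is the standard Williams parametrisation, and it requires neither amenability nor freeness). The assertion that exactly one primitive ideal lies over each quasi-orbit is then precisely the statement that $\mathrm{Res}$ is a bijection, and surjectivity being already in hand, the content of the theorem is injectivity of $\mathrm{Res}$.

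Injectivity is the hard half and is exactly where the Effros--Hahn conjecture is indispensable: one must show that any two primitive ideals with the same quasi-orbit already coincide. By Gootman--Rosenberg each such ideal is induced from a stabiliser, and freeness collapses the stabiliser to $\{1_G\}$, so every primitive ideal of $A\rtimes_{\alpha}G$ is induced from a single primitive ideal $P$ of $A$. It then remains to argue that the induced ideal depends only on the quasi-orbit of the inducing point $P$, and not on $P$ itself; this uses the $G$-invariance of kernels along an orbit together with a continuity-and-closure argument to pass from orbits to quasi-orbits. Amenability and separability are the standing hypotheses under which the Gootman--Rosenberg induction theorem applies and under which the full crossed product (not merely its reduced quotient) carries this clean parametrisation.

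The main obstacle I anticipate is precisely this injectivity, i.e. the Gootman--Rosenberg induction theorem itself, whose proof is genuinely deep: it rests on the Mackey machine for crossed products, Sauvageot's disintegration of representations over the quasi-orbit space, and delicate measure-theoretic arguments that crucially exploit amenability and separability. I would therefore not attempt to reprove it from scratch but rather invoke it as a black box (cf. Gootman--Rosenberg, and the exposition in Williams, \emph{Crossed Products of $C^*$-Algebras}), devoting my own effort only to the bookkeeping that specialises it to the free case. The final clause then follows at once: if every orbit is hull-kernel dense, the quasi-orbit space $\mathcal{Q}$ degenerates to a single point, so $A\rtimes_{\alpha}G$ has a unique primitive ideal $\mathfrak{P}$; since the intersection of all primitive ideals of a $C^*$-algebra is $\{0\}$, we get $\mathfrak{P}=\{0\}$, whence $A\rtimes_{\alpha}G$ is simple.
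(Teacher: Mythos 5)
Your proposal is correct and takes essentially the same route as the paper: the paper's entire proof is a citation of [GoRo79], Corollary 3.3 (which is verbatim this statement), and your argument likewise delegates all the genuine depth to the Gootman--Rosenberg induction theorem from that same paper, invoked as a black box. The only difference is that you explicitly carry out the routine specialization to the free case --- the quasi-orbit parametrisation, the collapse of stabilisers, and the simplicity conclusion --- which the paper leaves folded inside the citation.
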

 
\begin{proof}
\,\,\,A nice proof of this theorem can be found in [GoRo79], Corollary 3.3.
\end{proof}

\section{An Application to Noncommutative Geometry: Towards a Geometric Approach to Noncommutative Principal Bundles}\label{outlook}

The Theorem of Serre and Swan (cf. [Swa62]) justifies to consider finitely generated projective modules over unital algebras as ``noncommutative vector bundles ". Unfortunately, the case of principal bundles is not treated in the same satisfactory way. From a geometrical point of view it is, so far, not sufficiently well understood what a ``noncommutative principal bundle" should be. Still, there are several approaches towards the noncommutative geometry of principal bundles: For example, there is a well-developed abstract algebraic approach known as Hopf-Galois extensions which uses the theory of Hopf algebras (cf. [Sch04]). Another topologically oriented approach can be found in [ENOO09]; here the authors use $C^*$-algebraic methods to develop a theory of principal noncommutative torus bundles. In [Wa11a] we have developed a geometrically oriented approach to the noncommutative geometry of principal bundles based on dynamical systems and the representation theory of the corresponding transformation groups. We give a brief outlook:

According to Example \ref{induced transformation triples} each principal bundle $(P,M,G,q,\sigma)$ gives rise to a smooth dynamical system $(C^{\infty}(P),G,\alpha)$, consisting of the Fr\'echet algebra of smooth functions on the total space $P$, the structure group $G$ and a group homomorphism $\alpha:G\rightarrow\Aut(C^{\infty}(P))$, induced by the smooth action map $\sigma:P\times G\rightarrow P$ of $G$ on $P$. Conversely, given a manifold $P$ and a Lie group $G$, we have seen in Proposition \ref{smoothness of the group action on the set of characters} that each smooth dynamical system $(C^{\infty}(P),G,\alpha)$ gives rise to a smooth group action $\sigma:P\times G\rightarrow P$ and it is reasonable to ask if there exist natural (algebraic) conditions on $(C^{\infty}(P),G,\alpha)$ which ensure the freeness of $\sigma$. That is where Section \ref{section:free dynamical systems} enters the picture: If the smooth dynamical system $(C^{\infty}(P),G,\alpha)$ is free and if the induced action is additionally proper (which is for example the case if $G$ is compact), then Theorem \ref{free action theorem} implies that we obtain a locally trivial principal bundle of the form $(P,P/G,G,\pr,\sigma)$, where $\pr:P\rightarrow P/G,\,\,\,p\mapsto p.G$ denotes the corresponding orbit map. 

Now, it is interesting to investigate on the geometric structure of the induced principal bundle $(P,P/G,G,\pr,\sigma)$. For example, if we apply the results of Section \ref{free dyn sys casg} to the case $G=\mathbb{T}^n$, it turns out that the induced principal torus bundle $(P,P/\mathbb{T}^n,\mathbb{T}^n,\pr,\sigma)$ is trivial if and only if the corresponding isotypic components contain invertible elements. This observation justifies to call a dynamical system $(A,\mathbb{T}^n,\alpha)$ a \emph{trivial noncommutative principal $\mathbb{T}^n$-bundle} if the isotypic components contain invertible elements. While in classical (commutative) differential geometry there exists up to isomorphy only one trivial principal $\mathbb{T}^n$-bundle over a given manifold $M$, the situation completely changes in the noncommutative world. In particular, we provide a complete classification of all possible trivial noncommutative principal $\mathbb{T}^n$-bundles up to completion in terms of a suitable cohomology theory (cf. [Wa11b]). 

The step from the trivial to the non-trivial case is then carried out by introducing an appropriated method of localizing algebras in a ``smooth" way (cf. [Wa11c]) and saying that a (smooth) dynamical system $(A,\mathbb{T}^n,\alpha)$ is a \emph{NCP $\mathbb{T}^n$-bundle} if ``localization" around characters of the fixalgebra $Z$ of the induced action of $\mathbb{T}^n$ on the center $C_A$ of $A$ leads to trivial NCP $\mathbb{T}^n$-bundles. In particular, this approach covers the classical theory of principal $\mathbb{T}^n$-bundles and further examples are given by sections of algebra bundles endowed with certain actions of $\mathbb{T}^n$ by algebra automorphisms. 



\begin{appendix}

\section{Rudiments on the Smooth Exponential Law}\label{Rudiments on the Smooth Exponential Law}

For arbitrary sets $X$ and $Y$, let $Y^X$ be the set of all mappings from $X$ to $Y$. Then the following ``exponential law" holds: For any sets $X,Y$ and $Z$, we have
\[Z^{X\times Y}\cong(Z^Y)^X
\]as sets. To be more specific, the map
\[Z^{X\times Y}\cong(Z^Y)^X,\,\,\,f\rightarrow f^{\vee}
\]is a bijection, where
\[f^{\vee}:X\rightarrow Z^Y,\,\,\,f^{\vee}(x):=f(x,\cdot).
\]The inverse map is given by
\[(Z^Y)^X\rightarrow Z^{X\times Y},\,\,\,g\mapsto g^{\wedge},
\]where
\[g^{\wedge}:X\times Y\rightarrow Z,\,\,\,g^{\wedge}(x,y):=g(x)(y).
\]Next we consider an important topology for spaces of smooth functions:

\begin{definition}\label{smooth compact open topology}{\bf(Smooth compact open topology).}\index{Smooth!Compact Open Topology}
If $M$ is a locally convex manifold and $E$ a locally convex space, then the \emph{smooth compact open topology} on $C^{\infty}(M,E)$ is defined by the embedding
\[C^{\infty}(M,E)\hookrightarrow\prod_{n\in\mathbb{N}_0}C(T^nM,T^nE),\,\,\,f\mapsto(T^nf)_{n\in\mathbb{N}_0},
\]where the spaces $C(T^nM,T^nE)$ carry the compact open topology. Since $T^nE$ is a locally convex space isomorphic to $E^{2^n}$, the spaces $C(T^nM,T^nE)$ are locally convex and we thus obtain a locally convex topology on $C^{\infty}(M,E)$.
\end{definition}

It is natural to ask if there exists an ``exponential law"  for the space of smooth functions endowed with the smooth compact open topology, i.e., if we always have
\begin{align}
C^{\infty}(M\times N,E)\cong C^{\infty}(M,C^{\infty}(N,E))\label{exp law}
\end{align}
as a locally convex space, for all locally convex smooth manifolds $M$, $N$ and locally convex spaces $E$. In general, the answer is \emph{no}. Nevertheless, it can be shown that (\ref{exp law}) holds if $N$ is a finite-dimensional manifold over $\mathbb{R}$. To be more precise:

\begin{lemma}\label{smooth exp law}
Let $M$ be a real locally convex smooth manifold, $N$ a finite-dimensional smooth manifold and $E$ a topological 
vector space. Then a map $f:M\rightarrow C^{\infty}(N,E)$ is smooth if and only if the map
\[f^{\wedge}:M\times N\rightarrow E,\,\,\,f^{\wedge}(m,n):=f(m)(n)
\]is smooth. Moreover, the following map is an isomorphism of locally convex spaces:
\[C^{\infty}(M,C^{\infty}(N,E))\rightarrow C^{\infty}(M\times N,E),\,\,\,f\mapsto f^{\wedge}.
\]
\end{lemma}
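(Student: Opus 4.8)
The plan is to establish the two assertions together, since the equivalence ``$f$ smooth $\Leftrightarrow f^{\wedge}$ smooth'' is precisely what makes $f\mapsto f^{\wedge}$ a well-defined bijection between the two function spaces, while the topological isomorphism demands a finer comparison of the defining embeddings. Throughout I would work in the Michal--Bastiani ($C^{\infty}_c$) calculus, where $C^1$-ness of a map between open subsets of locally convex spaces means that all directional derivatives exist and assemble into a continuous map on the product with the direction space, and $C^{\infty}$ means $C^k$ for every $k$. Since smoothness is local and may be checked in charts, and since $N$ is finite-dimensional (hence locally modelled on $\mathbb{R}^n$ and paracompact), the first reduction is to the case where $M$ is open in a locally convex space and $N$ is open in $\mathbb{R}^n$. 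In this local situation compact exhaustions and the fundamental theorem of calculus are available, and it is exactly here that the finite-dimensionality of $N$ is used in an essential way.

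For the direction ``$f$ smooth $\Rightarrow f^{\wedge}$ smooth'' I would write $f^{\wedge}=\ev_N\circ(f\times\id_N)$, where $\ev_N:C^{\infty}(N,E)\times N\to E$ is the evaluation map. By [NeWa07], Proposition I.2, the map $\ev_N$ is smooth, so $f^{\wedge}$ is smooth as a composition of smooth maps via the chain rule. This is the easy half. The substance lies in the converse. Here I would proceed inductively on the order of differentiability, exploiting the description of the smooth compact open topology through the embedding $g\mapsto(T^kg)_k$ into $\prod_k C(T^kN,T^kE)$. Continuity of $f$ reduces to continuity of $m\mapsto T^k(f(m))$ into each factor $C(T^kN,T^kE)$; since $T^k(f(m))$ is assembled from the partial $N$-derivatives of $f^{\wedge}$ up to order $k$, and these are jointly continuous in $(m,n)$ by smoothness of $f^{\wedge}$, the required continuity uniform on compact sets follows.

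For differentiability I would identify the candidate derivative as $(df(m)(v))(n)=df^{\wedge}((m,n),(v,0))$, the partial derivative of $f^{\wedge}$ in the $M$-direction, and verify that the difference quotients $\tfrac{1}{t}(f(m+tv)-f(m))$ converge to this limit in $C^{\infty}(N,E)$. The key obstacle, and the heart of the lemma, is precisely this convergence: it must hold not merely pointwise on $N$ but in the smooth compact open topology, that is, uniformly on compact subsets of $N$ together with all tangent prolongations $T^k$. This forces an interchange of the limit $t\to 0$ with the $N$-derivatives of every order. I would justify the interchange by representing the difference quotient through the fundamental theorem of calculus as $\int_0^1 df^{\wedge}((m+stv,n),(v,0))\,ds$ and then invoking the joint continuity, hence uniform continuity on compact sets, of the derivatives of $f^{\wedge}$, which is exactly where the finite-dimensionality of $N$ becomes indispensable. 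Iterating this identification of derivatives and the same uniform-convergence argument at each stage yields $f\in C^{\infty}(M,C^{\infty}(N,E))$.

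Finally, for the topological statement, the set-theoretic exponential law together with the equivalence just proved shows that $f\mapsto f^{\wedge}$ is a linear bijection $C^{\infty}(M,C^{\infty}(N,E))\to C^{\infty}(M\times N,E)$; linearity is immediate from the pointwise formula. Bicontinuity I would obtain by comparing the two sides through their defining embeddings into products of compact-open spaces: using the explicit formulas for the tangent maps of a curried resp.\ uncurried smooth map, a seminorm estimate on one side translates into a matching estimate on the other, uniformly over the relevant compact sets, which yields that both $f\mapsto f^{\wedge}$ and its inverse are continuous and hence that the map is an isomorphism of locally convex spaces.
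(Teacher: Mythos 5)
The paper itself offers no argument for this lemma: its entire proof is the citation ``A proof can be found in [NeWa07], Appendix, Lemma A3.'' Your proposal, by contrast, reconstructs an actual proof, and your reconstruction follows the standard route that the cited source (and the Michal--Bastiani literature it rests on) takes: the easy direction by writing $f^{\wedge}=\ev_N\circ(f\times\id_N)$ and invoking smoothness of evaluation (which is exactly how the paper itself uses evaluation in Proposition \ref{smoothness of the group action on the algebra of smooth functions}), and the hard direction by identifying the candidate $M$-derivative as the partial derivative of $f^{\wedge}$, representing difference quotients as $\int_0^1 df^{\wedge}((m+stv,n),(v,0))\,ds$, and using uniform continuity on compact subsets of the finite-dimensional factor $N$ to push the limit $t\to 0$ through all prolongations $T^k$; the topological isomorphism then follows by matching the seminorms defining the two smooth compact open topologies. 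This is correct in outline. Three technical points that a complete write-up would have to address, and which you gloss over: (i) the statement allows $E$ to be a mere topological vector space, so the Riemann integral in your mean-value representation need not exist in $E$; one integrates in the completion $\tilde{E}$ and observes that the difference quotients and the candidate limit already take values in $E$ (alternatively, one notes that the isomorphism assertion only makes sense for locally convex $E$ anyway, cf.\ Definition \ref{smooth compact open topology}); (ii) your chart reduction must also be justified on the target side, i.e.\ smoothness into $C^{\infty}(N,E)$ should be checked through the closed embedding of $C^{\infty}(N,E)$ into $\prod_i C^{\infty}(U_i,E)$ over a chart cover $(U_i)$ of $N$, which requires knowing that smoothness into such a (closed subspace of a) product can be tested componentwise; (iii) there is no circularity in invoking [NeWa07], Proposition I.2, because the smoothness of $\ev_N$ there is proven directly and not via the exponential law, but this deserves an explicit remark. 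In sum, what your approach buys is a self-contained appendix at the cost of these fussy verifications; what the paper's one-line citation buys is brevity, delegating precisely those details to the reference.
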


\begin{proof}
\,\,\,A proof can be found in [NeWa07], Appendix, Lemma A3.
\end{proof}

\section{The Spectrum of the Algebra of Smooth Function}

In this part of the appendix we discuss the spectrum of the algebra of smooth functions. The proof of the following proposition originates from a unpublished paper of H. Grundling and K.-H. Neeb:

\begin{theorem}\label{spec of C(M,R) set}
Let $M$ be a manifold and let $C^{\infty}(M,\mathbb{R})$ be the unital Fr\'echet algebra of smooth functions on $M$. Then the following assertions hold:
\begin{itemize}
\item[\emph{(a)}]
Each closed maximal ideal of $C^{\infty}(M,\mathbb{R})$ is the kernel of an evaluation homomorphism \[\delta_m:C^{\infty}(M,\mathbb{R})\rightarrow\mathbb{R},\,\,\,f\mapsto f(m)\,\,\,\text{for some}\,\,\,m\in M.
\]
\item[\emph{(b)}]
Each character $\chi:C^{\infty}(M,\mathbb{R})\rightarrow\mathbb{R}$ is an evaluation in some point $m\in M$.
\end{itemize}
\end{theorem}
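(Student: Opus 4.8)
The plan is to prove both parts through the common principle that the \emph{zero set} $Z(I):=\{m\in M:f(m)=0\ \text{for all}\ f\in I\}$ of the relevant ideal $I$ is non-empty; once a point $m\in Z(I)$ is found, the conclusion is immediate, since for every $f$ one has $f-\chi(f)\cdot 1\in I$ in case (b) (with $I=\ker\chi$), whence $f(m)=\chi(f)$ and $\chi=\delta_m$, while in case (a) a maximal ideal contained in $\ker\delta_m$ must equal it. On a \emph{compact} manifold the non-emptiness of $Z(I)$ is straightforward: if $Z(I)=\emptyset$, then each $m$ admits $f_m\in I$ with $f_m(m)\neq 0$, so $f_m^2\in I$ is positive on a neighbourhood $U_m$; a finite subcover $U_{m_1},\dots,U_{m_k}$ yields $\sum_j f_{m_j}^2\in I$ strictly positive, hence invertible, contradicting the properness of $I$. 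The entire difficulty is therefore the \emph{non-compactness} of $M$, and the two parts handle it by different devices, both exploiting that $M$ is second countable and paracompact.

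For part (a) I would use the closedness of $\mathfrak m$. Assuming $Z(\mathfrak m)=\emptyset$, cover $M$ by neighbourhoods $U_m$ on which some $f_m^2\in\mathfrak m$ is positive; by second countability and paracompactness extract a countable, locally finite subcover $\{U_i\}$ with associated $f_i^2\in\mathfrak m$, and pick a subordinate smooth partition of unity $\{\psi_i\}$. Then $h:=\sum_i\psi_i f_i^2$ is strictly positive everywhere, because at each point some $\psi_i$ is positive and there $f_i^2>0$. Since the cover is locally finite, the sum is finite on every compact set, so the partial sums---each lying in $\mathfrak m$---are eventually equal to $h$ on each compact set and hence converge to $h$ in the smooth compact-open topology; as $\mathfrak m$ is \emph{closed}, $h\in\mathfrak m$. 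But $h$ is invertible in $C^{\infty}(M,\mathbb{R})$, contradicting the properness of $\mathfrak m$. Thus $Z(\mathfrak m)\neq\emptyset$, and the concluding step gives $\mathfrak m=\ker\delta_m$.

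For part (b) the ideal $\ker\chi$ is not known to be closed, so I would avoid the infinite summation by localizing with a proper function. First, $\chi(1)=1$, so $\ker\chi$ is a proper (indeed codimension-one, maximal) ideal. Choose a smooth proper function $\phi:M\to[0,\infty)$, which exists since $M$ is second countable, and set $c:=\chi(\phi)$, so that $\phi-c\cdot 1\in\ker\chi$ and $K:=\{m\in M:\phi(m)\leq c\}$ is compact. If some $m\in K$ satisfies $f(m)=0$ for all $f\in\ker\chi$, we are done; otherwise, for each $m\in K$ pick $f_m\in\ker\chi$ with $f_m(m)\neq 0$ and a neighbourhood $U_m$ on which $f_m^2>0$, take a finite subcover of the compact set $K$, and form $g:=\sum_j f_{m_j}^2\in\ker\chi$, positive on $K$ and non-negative everywhere. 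A single correction term then controls the behaviour at infinity:
\[
h:=g+(\phi-c\cdot 1)^2\in\ker\chi
\]
is strictly positive everywhere, since $g>0$ on $K$ while $(\phi-c\cdot 1)^2>0$ off $K$; hence $h$ is invertible, again contradicting the properness of $\ker\chi$. Therefore there is $m\in K$ with $\ker\chi\subseteq\ker\delta_m$, and as above $f(m)=\chi(f)$ for all $f$, i.e.\ $\chi=\delta_m$.

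The step I expect to be the main obstacle is exactly this passage from compact to non-compact $M$, and the resolution is the contrast between the two parts: in (a) the closedness of the ideal legitimizes the infinite, locally finite partition-of-unity sum, whereas in (b) the exhaustion function together with the scalar $c=\chi(\phi)$ confines any possible evaluation point to the compact sublevel set $K$ and lets the \emph{single} extra summand $(\phi-c\cdot 1)^2$ tame the algebra at infinity---thereby producing an invertible element of $\ker\chi$ with no appeal whatsoever to continuity of $\chi$.
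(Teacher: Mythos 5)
Your proposal is correct. In part (a) you follow essentially the paper's own strategy: both proofs manufacture a strictly positive, hence invertible, element of the closed maximal ideal as a locally finite sum of ideal elements, and both use closedness to keep the limit inside the ideal; the paper organizes the sum along the annuli $M_{n+1}\setminus M_{n-1}^0$ of a compact exhaustion, multiplying by bump functions, while you use a partition of unity (a cosmetic difference; note only that what you call a ``countable, locally finite subcover'' should really be a countable cover together with a subordinate partition of unity whose supports form a locally finite family). In part (b), however, you take a genuinely different route. The paper first shows that an arbitrary character $\chi$ is positive on non-negative functions (writing $f+c=h^2$ with $h$ invertible), then that $\chi$ vanishes on all compactly supported functions, and finally obtains the absurdity $\chi(F)\geq c$ for every $c>0$ from a proper function $F$ and non-negative $f_c\in\ker\chi$ with $f_c>0$ on $F^{-1}(]-\infty,c])$. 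You dispense with positivity altogether: since $\phi-\chi(\phi)\cdot 1\in\ker\chi$, the single square $(\phi-\chi(\phi)\cdot 1)^2$ is an element of the ideal which is strictly positive off the compact sublevel set $K=\phi^{-1}([0,\chi(\phi)])$, and adding the finite sum of squares $g$ obtained from a finite cover of $K$ yields an everywhere positive, hence invertible, element of $\ker\chi$ --- the same purely algebraic ``invertible element in a proper ideal'' contradiction that drives part (a). What each approach buys: the paper's longer route establishes intermediate facts of independent interest (positivity of characters, annihilation of compactly supported functions), whereas yours is shorter, uniform with part (a), and makes completely transparent that no continuity of $\chi$ is used anywhere. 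The degenerate case $\chi(\phi)<\min\phi$, where $K=\emptyset$, causes no trouble in your argument: the Case-2 contradiction then applies vacuously, showing simply that this case cannot occur for an actual character.
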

\begin{proof}
\,\,\,(a) Let $I\subseteq C^{\infty}(M,\mathbb{R})$ be a closed maximal ideal. If all functions vanish in the point $m$ in $M$, then the maximality of $I$ implies that $I=\ker\delta_m$. So we have to show that such a point exists. Let us assume that this is not the case. From that we shall derive the contradiction $I=C^{\infty}(M,\mathbb{R})$:

(i) Let $K\subseteq M$ be a compact set. Then for each $m\in K$ there exists a function $f_m\in I$ with $f_m(m)\neq 0$. The family $(f_m^{-1}(\mathbb{R}^{\times}))_{m\in K}$ is an open cover of $K$, so that there exist $m_1,\ldots,m_n\in K$ and a smooth function $f_K:=\sum_{i=1}^n f_{m_i}^2>0$ on $K$.

(ii) If $M$ is compact, then we thus obtain a function $f_M\in I$ which is nowhere zero. This leads to the contradiction $f_M\in C^{\infty}(M,\mathbb{R})^{\times}\cap I$. Next, suppose that $M$ is non-compact. Then there exists a sequence $(M_n)_{n\in N}$ of compact subsets with $M=\bigcup_n M_n$ and $M_n\subseteq M_{n+1}^0$. Let $f_n\in I$ be a non-negative function supported by $M_{n+1}\backslash M_{n-1}^0$ with $f_n>0$ on the compact set $M_n\backslash M_{n-1}^0$. Here the requirement on the support can be achieved by multiplying with a smooth function supported by $M_{n+1}\backslash M_{n-1}^0$ which equals 1 on $M_n\backslash M_{n-1}^0$. Then $f:=\sum_n f_n$ is a smooth function in $\overline{I}=I$ with $f>0$. Hence, $f$ is invertible, which is a contradiction.

(b) The proof of this assertion is divided into four parts:

(i) Let $\chi:C^{\infty}(M,\mathbb{R})\rightarrow\mathbb{R}$ be a character. If $f\in C^{\infty}(M,\mathbb{R})$ is non-negative, then for each $c>0$ we have $f+c=h^2$ for some smooth function $h\in C^{\infty}(M,\mathbb{R})^{\times}$, and this implies that $\chi(f)+c=\chi(f+c)\geq 0$, which leads to $\chi(f)\geq -c$, and consequently to $\chi(f)\geq 0$.

(ii) Now, we choose a smooth function $F:M\rightarrow\mathbb{R}$ for which the sets $F^{-1}(]-\infty,c])$, $c\in\mathbb{R}$, are compact. Such a function can easily be constructed from a sequence $(M_n)_{n\in\mathbb{N}}$ as above.

(iii) We consider the ideal $I=\ker\chi$. If $I$ has a zero, then $I=\ker\delta_m$ for some $m$ in $M$ and this implies $\chi=\delta_m$. Hence, we may assume that $I$ has no zeros. Then the argument of (a) provides for each compact subset $K\subseteq M$ a compactly supported function $f_K\in I$ with $f_K>0$ on $K$. If $h\in C^{\infty}(M,\mathbb{R})$ is supported by $K$, we therefore find a $\lambda>0$ with $\lambda f_K-h\geq 0$, which leads to
\[0\leq\chi(\lambda f_K-h)=\chi(-h),
\]and hence to $\chi(h)\leq0$. Replacing $h$ by $-h$, we also get $\chi(h)\geq 0$ and hence $\chi(h)=0$. Therefore, $\chi$ vanishes on all compactly supported functions.

(iv) For $c>0$ we now pick a non-negative function $f_c\in I$ with $f_c>0$ on the compact subset $F^{-1}(]-\infty,c])$. Then there exists a $\mu>0$ with $\mu f_c+F\geq c$ on $F^{-1}(]-\infty,c])$. Now $\mu f_c+F\geq c$ holds on all of  $M$, and therefore $\chi(F)=\chi(\mu f_c+F)\geq c$.
Since $c>0$ was arbitrary, we arrive at a contradiction.
\end{proof}

\begin{corollary}\label{spec of C(M,K) set}
Let $M$ be a manifold and let $C^{\infty}(M,\mathbb{C})$ be the unital Fr\'echet algebra of smooth complex-valued functions on $M$. Then each character $\chi:C^{\infty}(M,\mathbb{C})\rightarrow\mathbb{C}$ is an evaluation in some point $m\in M$.
\end{corollary}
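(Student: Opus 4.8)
The final statement is Corollary \ref{spec of C(M,K) set}, which asserts that each character $\chi: C^\infty(M,\mathbb{C}) \to \mathbb{C}$ is an evaluation at some point $m \in M$. The key here is that we already have Theorem \ref{spec of C(M,R) set}(b) establishing the analogous result for the \emph{real} algebra $C^\infty(M,\mathbb{R})$. So my plan is to reduce the complex case to the real case by exploiting the natural decomposition of complex-valued smooth functions into real and imaginary parts.

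First I would restrict the given complex character $\chi$ to the real subalgebra $C^\infty(M,\mathbb{R}) \subseteq C^\infty(M,\mathbb{C})$. The obstacle to immediately invoking Theorem \ref{spec of C(M,R) set}(b) is that the restriction $\chi|_{C^\infty(M,\mathbb{R})}$ a priori takes values in $\mathbb{C}$, not in $\mathbb{R}$, whereas that theorem concerns \emph{$\mathbb{R}$-valued} characters. Hence the main step is to show that $\chi(f) \in \mathbb{R}$ whenever $f$ is real-valued. I would argue this by a positivity/spectral trick analogous to part (b)(i) of the proof of Theorem \ref{spec of C(M,R) set}: for a real-valued $f$ and any real $c$ strictly larger than nothing in particular, one can write $f + c = h^2$ for suitable smooth real $h$ once $f + c \geq 0$, forcing $\chi(f) + c = \chi(h)^2$; but $\chi(h)^2$ need not be real for complex characters, so instead I would use that $\exp(itf)$ is a unitary-type element. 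More cleanly: for real $f$, the element $f$ satisfies that $f - \lambda \cdot 1$ is invertible in $C^\infty(M,\mathbb{C})$ for every non-real $\lambda \in \mathbb{C}$ (since $f$ is real-valued, $f(m) - \lambda \neq 0$ everywhere), and a character must send non-invertible elements' defining scalars correctly; thus $\chi(f) = \lambda$ would force $f - \lambda$ non-invertible, which is impossible for non-real $\lambda$. This shows $\chi(f) \in \mathbb{R}$.

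Once real-valuedness of the restriction is established, $\chi|_{C^\infty(M,\mathbb{R})}$ is a genuine $\mathbb{R}$-valued character, so Theorem \ref{spec of C(M,R) set}(b) yields a point $m \in M$ with $\chi(f) = f(m) = \delta_m(f)$ for all real-valued $f$. It then remains to extend this identity to all complex-valued $f$. Writing an arbitrary $f \in C^\infty(M,\mathbb{C})$ as $f = u + iv$ with $u = \mathrm{Re}(f)$ and $v = \mathrm{Im}(f)$ both in $C^\infty(M,\mathbb{R})$, and using that $\chi$ is $\mathbb{C}$-linear and multiplicative, I would compute
\[
\chi(f) = \chi(u) + i\,\chi(v) = u(m) + i\,v(m) = f(m) = \delta_m(f).
\]
This gives $\chi = \delta_m$, completing the proof.

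The step I expect to be the genuine obstacle is verifying that $\chi$ maps real-valued functions to real numbers; everything after that is formal bookkeeping. The invertibility argument sketched above is the cleanest route, but one must be careful to confirm that $f - \lambda \cdot 1_A$ is actually invertible \emph{in the locally convex algebra} $C^\infty(M,\mathbb{C})$ for non-real $\lambda$ — this holds because its pointwise reciprocal $m \mapsto (f(m) - \lambda)^{-1}$ is again a smooth complex-valued function, the denominator being nowhere zero. A character necessarily sends invertible elements to nonzero scalars, so $\chi(f) - \lambda = \chi(f - \lambda \cdot 1_A) \neq 0$ for every non-real $\lambda$, which is exactly the statement that $\chi(f)$ is real.
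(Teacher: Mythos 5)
Your proposal is correct and follows essentially the same route as the paper's own proof: decompose $f = f_1 + i f_2$ into real and imaginary parts, observe that the restriction of $\chi$ to $C^{\infty}(M,\mathbb{R})$ is real-valued and hence a character of the real algebra, and then invoke Theorem \ref{spec of C(M,R) set}(b) together with $\mathbb{C}$-linearity. The only difference is that the paper merely \emph{asserts} the real-valuedness of the restriction, whereas you correctly identify this as the one nontrivial point and prove it via the invertibility of $f - \lambda \cdot 1$ in $C^{\infty}(M,\mathbb{C})$ for non-real $\lambda$ (the pointwise reciprocal being smooth), so your write-up is, if anything, more complete than the paper's.
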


\begin{proof}
\,\,\,This assertion easily follows from Theorem \ref{spec of C(M,R) set} (b): Indeed, we just have to note that each element $f\in C^{\infty}(M,\mathbb{C})$ can be written as a sum of smooth real-valued functions $f_1$ and $f_2$, i.e., 
\[f=f_1+if_2\,\,\,\text{for}\,\,\,f_1,f_2\in C^{\infty}(M,\mathbb{R}),
\]and that the restriction of the character $\chi$ to $C^{\infty}(M,\mathbb{R})$ is real-valued, i.e., defines a character of $C^{\infty}(M,\mathbb{R})$.
\end{proof}

The preceding proposition shows that the correspondence between $M$ and $\Gamma_{C^{\infty}(M)}$ is actually a topological isomorphism:

\begin{proposition}\label{spec of C(M) top}
Let $M$ be a manifold. Then the map
\begin{align}
\Phi_M:M\rightarrow \Gamma_{C^{\infty}(M)},\,\,\,m\mapsto \delta_m\notag.
\end{align}
is a homeomorphism.
\end{proposition}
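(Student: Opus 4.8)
The plan is to show that $\Phi_M$ is a continuous bijection which is also open (or, equivalently, whose inverse is continuous), exploiting that both $M$ and $\Gamma_{C^\infty(M)}$ carry the topology making certain evaluation functionals continuous. First I would establish that $\Phi_M$ is a bijection. Injectivity follows because smooth functions separate points of a manifold: given $m\neq m'$ there is an $f\in C^\infty(M,\mathbb{R})$ with $f(m)\neq f(m')$, so $\delta_m\neq\delta_{m'}$. Surjectivity is exactly the content of Corollary \ref{spec of C(M,K) set}, which asserts that every character $\chi\colon C^\infty(M,\mathbb{C})\to\mathbb{C}$ is of the form $\delta_m$ for some $m\in M$; since $\delta_m\neq 0$ (it sends the constant function $1$ to $1$), each such character lies in $\Gamma_{C^\infty(M)}$, and conversely every element of $\Gamma_{C^\infty(M)}$ arises this way.

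Next I would verify continuity of $\Phi_M$. The spectrum $\Gamma_{C^\infty(M)}$ carries the topology of pointwise convergence on $C^\infty(M)$, so $\Phi_M$ is continuous precisely when, for each fixed $f\in C^\infty(M)$, the map $m\mapsto\Phi_M(m)(f)=f(m)$ is continuous on $M$. But this map is just $f$ itself, which is smooth and hence continuous. Thus $\Phi_M$ is continuous with essentially no work.

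The main obstacle is continuity of the inverse $\Phi_M^{-1}$, i.e.\ openness of $\Phi_M$. Here I would argue that the topology of pointwise convergence on $\Gamma_{C^\infty(M)}$, transported back to $M$ via the bijection $\Phi_M$, coincides with the original manifold topology. Concretely, a subbasis for the pointwise-convergence topology consists of sets of the form $\{\chi:\vert\chi(f)-c\vert<\epsilon\}$, whose preimages under $\Phi_M$ are the sets $\{m:\vert f(m)-c\vert<\epsilon\}=f^{-1}(B_\epsilon(c))$; these are open in $M$ since $f$ is continuous (this re-proves continuity) and they form a family that I claim generates the manifold topology. To see that they generate \emph{all} open sets, I would use that $M$ is a manifold admitting smooth charts together with smooth bump functions: given a point $m$ and a neighbourhood $U$, one can build finitely many smooth functions $f_1,\dots,f_k$ and an open box in $\mathbb{R}^k$ whose joint preimage is a neighbourhood of $m$ contained in $U$, using a chart around $m$ whose coordinate functions are extended to global smooth functions via a bump function (exactly the technique appearing in the proof of Lemma \ref{characterization of smooth maps}). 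This shows the weak topology induced by $C^\infty(M)$ separates points from closed sets and hence recovers the manifold topology, giving continuity of $\Phi_M^{-1}$.

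Assembling these pieces, $\Phi_M$ is a continuous bijection whose inverse is also continuous, hence a homeomorphism. I expect the bijectivity to be immediate from Corollary \ref{spec of C(M,K) set} and point-separation, and continuity of $\Phi_M$ to be trivial; the genuine content is the openness argument, where the availability of bump functions and charts on a finite-dimensional paracompact manifold is what makes the weak topology coincide with the manifold topology. An alternative, cleaner route for the hard step would be to invoke local compactness: if one knows $\Gamma_{C^\infty(M)}$ is locally compact and Hausdorff and that $\Phi_M$ is a continuous bijection that is proper (preimages of compacta are compact, which follows from the existence of a smooth exhaustion function $F$ as constructed in Theorem \ref{spec of C(M,R) set}), then $\Phi_M$ is automatically a homeomorphism; I would keep this in reserve in case the direct openness argument becomes notationally heavy.
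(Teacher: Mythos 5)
Your overall strategy---bijectivity from point separation plus Corollary \ref{spec of C(M,K) set}, continuity essentially for free from the pointwise-convergence topology, and openness via bump functions---is the same as the paper's. (The paper proves continuity with sequences, which is legitimate since $M$ is first countable; your initial-topology argument is if anything cleaner.)

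However, the openness step as you sketch it has a concrete flaw. You propose taking a chart $(\psi,V)$ around $m$ with $V\subseteq U$, extending the coordinates to global functions $f_i=h\cdot\psi_i$ by a bump function $h$, and claiming that the joint preimage of a small box around $(f_1(m),\dots,f_k(m))$ is a neighbourhood of $m$ contained in $U$. This fails whenever the box contains the origin of $\mathbb{R}^k$: every point $x$ outside $\supp(h)$ satisfies $f_i(x)=0$ for all $i$, so the entire complement of $\supp(h)$ lands in the preimage. Concretely, for $M=\mathbb{R}$, $m=0$, $U=(-1,1)$, $f_1(x)=h(x)x$: the preimage of the box $(-\epsilon,\epsilon)$ contains all of $\mathbb{R}\setminus\supp(h)$, which is not contained in $U$. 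The repair is to add the bump function itself to your list of test functions: requiring $\vert h(x)-1\vert<\tfrac{1}{2}$ forces $h(x)>0$, hence $x\in\supp(h)\subseteq U$. Once you do that, you notice the coordinate functions are superfluous: a single smooth $h$ with $h(m)\neq 0$ and $\supp(h)\subseteq U$ already does the job, since the set $\{\chi\in\Gamma_{C^{\infty}(M)}:\chi(h)\neq 0\}$ is open in the pointwise topology, contains $\delta_m$, and (using the already-established surjectivity) is contained in $\Phi_M(U)$. This one-function argument is exactly the paper's proof of openness. Finally, I would discard your fallback route via properness and local compactness: local compactness of $\Gamma_{C^{\infty}(M)}$ and any useful description of its compact subsets are not available before the homeomorphism is established, so that argument is circular as stated.
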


\begin{proof}
\,\,\,(i) The surjectivity of $\Phi$ follows from Corollary \ref{spec of C(M,K) set}.
To show that $\Phi$ is injective, choose elements $m\neq m'$ of $M$. Since $M$ is manifold, there exists a function $f$ in $C^{\infty}(M)$ with $f(m)\neq f(m')$. Then 
\[\delta_m(f)=f(m)\neq f(m')=\delta_{m'}(f)
\]implies that $\delta_m\neq\delta_{m'}$, i.e., $\Phi$ is injective. 

(ii) Next, we show that $\Phi$ is continuous: Let $m_n\rightarrow m$ be a convergent sequence in $M$. Then we have 
\[\delta_{m_n}(f)=f(m_n)\rightarrow f(m)=\delta_m(f)\,\,\,\text{for all}\,\,\,f\,\,\,\text{in}\,\,\,C^{\infty}(M),
\]i.e., $\delta_{m_n}\rightarrow\delta_m$ in the topology of pointwise convergence. Hence, $\Phi$ is continuous. 

(iii) We complete the proof by showing that $\Phi$ is an open map: For this let $U$ be an open subset of $M$, $m_0$ in $U$ and $h$ a smooth real-valued function with $h(m_0)\neq 0$ and $\supp(h)\subset U$. Since the map
\[\delta_h:\Gamma_{C^{\infty}(M)}\rightarrow\mathbb{K},\,\,\,\delta_m\mapsto h(m)
\]is continuous, a short calculations shows that $\Phi(U)$ is a neighbourhood of $m_0$ containing the open subset $\delta_h^{-1}(\mathbb{K}^{\times})$. Hence, $\Phi$ is open.
\end{proof}

\end{appendix}

\printindex[n] 
\printindex[idx] 
\end{document}